\def\={\!=\!}
\newtheorem{thm}{Theorem}
\newtheorem{lem}{Lemma}
\newtheorem{cor}{Corollary}
\newtheorem{prop}{Proposition}
\newtheorem{definition}{Definition}
\newtheorem{que}{Question}
\def\={\!=\!}
\def\|{{\Vert}}
\newcommand{\noi}{\noindent}
\def\Remark{\skip\noi{{\bf{Remark \number\rem.}}} \advance\rem by 1}
\title{
A Continuous Paradoxical Colouring Rule Using Group Action}
\author{Tu\u{g}kan Batu, Robert Simon, Grzegorz Tomkowicz}
\begin{document}
\begin{titlepage}
\maketitle
\thispagestyle{empty}

\begin{abstract}
  Given a probability space $(X, {\cal B}, m)$, measure preserving
  transformations $g_1, \dots , g_k$ of $X$, and a colour set $C$, a colouring
  rule is a way to colour the space with $C$ such that the colours allowed for
  a point $x$ are determined by that point's location and the colours of the
  finitely $g_1 (x), \dots , g_k(x)$ with $g_i(x) \not= x$ for all $i$ and
  almost all $x$.  We represent a colouring rule as a correspondence $F$
  defined on $X\times C^k$ with values in $C$. A function $f: X\rightarrow C$
  satisfies the rule at $x$ if $f(x) \in F( x, f(g_1 x), \dots , f(g_k x))$.  A
  colouring rule is paradoxical if it can be satisfied in some way almost
  everywhere with respect to $m$, but not in {\bf any} way that is measurable
  with respect to a finitely additive measure that extends the probability
  measure $m$ and for which the finitely many transformations
  $g_1, \dots , g_k$ remain measure preserving. We show that a colouring rule
  can be paradoxical when the $g_1, \dots, g_k$ are members of a group $G$, the
  probability space $X$ and the colour set $C$ are compact sets, $C$ is convex
  and finite dimensional, and the colouring rule says if $c: X\rightarrow C$ is
  the colouring function then the colour $c(x)$ must lie ($m$ a.e.)  in
  $F(x, c(g_1(x) ), \dots , c(g_k(x)))$ for a non-empty upper-semi-continuous
  convex-valued correspondence $F$ defined on $X\times C^k$.  We show that any
  colouring that approximates the correspondence by $\epsilon$ for small enough
  positive $\epsilon$ cannot be measurable in the same finitely additive way.
  Furthermore any function satisfying the colouring rule illustrates a paradox
  through finitely many measure preserving shifts defining injective maps from
  the whole space to subsets of measure summing up to less than one.
  \end{abstract}
\vskip.1cm

\noi Tu\u{g}kan Batu and Robert Simon \newline London School of
Economics,  Department of Mathematics\newline Houghton Street,
London WC2A 2AE\newline e-mail: \{t.batu, r.s.simon\}@lse.ac.uk\newline
\vskip.1cm
 
\noi Grzegorz Tomkowicz\\
Centrum Edukacji $G^2$,
ul.Moniuszki 9,
41-902 Bytom,
Poland\\
e-mail: gtomko@vp.pl

\end{titlepage}



\section{Introduction}

A common belief is that measure theoretic paradoxes, like the Banach-Tarski
paradox, are not relevant to real events. The reasoning is that such paradoxes
require the Axiom of Choice (AC), and therefore the exhibition of paradoxical
behaviour would negate the fact that we can remain logically consistent after
rejecting AC (Note however that there is a class of theorems called shadows of
AC which proofs use AC and exist even after rejecting AC; see~\cite{MT} for the
details.)

There are two parts to showing that a decomposition is paradoxical, a part
showing that it cannot be done in a measurable way and a part showing its
existence. It is the second part that requires some variation of AC. We don't
employ AC for the first part.

In~\cite{ST1}, we considered colouring rules such that the allowed colours for
a point are determined by location and the colours of finitely many of the
point's neighbours in a graph.  The adjacency relation is defined by measure
preserving transformations and the finitely many transformations of a point $x$
are called its {\em descendants}. A colouring rule is paradoxical if it can be
satisfied in some way almost everywhere, but not in any way that is measurable
with respect to a finitely additive measure for which the transformations
defining the descendants remain measure preserving. We demonstrated several
paradoxical colouring rules and proved that if the measure preserving
transformations belong to a group and there are finitely many colour classes
then any colouring of a paradoxical colouring rule has colour classes that
jointly with the measure preserving transformations and the Borel sets define a
{\em measurably $G$-paradoxical decomposition } of the probability space, by
which we mean the existence of two measurable sets of different measures that
are $G$-equidecomposable (see~\cite{ST1}, Thm. 1).

In the conclusion of~\cite{ST1}, we asked whether a colouring rule could be
paradoxical if the colour classes belonged to a finite dimensional convex set
and the colouring rule was defined by an upper-semi-continuous convex-valued
non-empty correspondence. We call such colouring rules {\em probabilistic}
colouring rules. In \cite{ST2}, we demonstrated a paradoxical probabilistic
colouring rule where some of the measure preserving transformations were
non-invertible, hence do not belong to a group. In the present paper, we give
an example where they do belong to a group. Invertibility of the
transformations defining the descendants presents special challenges.  To show
that a colouring rule is paradoxical it is advantageous to show that
satisfaction of the rule is done with extremal points (of colours) almost
everywhere. It is easier to do this if the transformations are
uncountable-to-one.  We do it below with transformations that are one-to-one.
    
One  interpretation of the upper-semi-continuous convex valued
correspondence is that the choosing of colours is according to a maximisation
or minimisation of a continuous and affine evaluation of options, with
indifference between two options implying indifference between all of their
convex combinations.  This is such an example.
          
Throughout this paper, by a {\em proper} finitely additive extension we
mean a finitely additive measure that extends the Borel measure and is
invariant with respect to the measure preserving actions used.

We use an inclusive concept of approximation.  A solution is $\epsilon$-stable
if the expected gains from deviation, evaluated at each point individually and
then integrated over the space, are no more than $\epsilon$.  We show that
every $\epsilon$-stable solution for small enough $\epsilon$ cannot be
measurable with respect to any proper finitely additive extension.

We were inspired optimistically by an example from~\cite{ST1}.  A brief description of
that example follows.

Consider $G= {\mathbf F}_2$ (the group generated freely by two generators) and
the space~$X=\{ -1, +1\} ^G$ acted on by $G$ in the canonical way (through
shifting). Let $g_1, g_2$ be the free generators of $G$.  Where the $e$
coordinate of an $x\in X$ lies in~$\{ -1, +1\}$ determines whether an arrow
from $x$ should be directed toward the choice of either $g_1x$ or $g_2x$ (if
$x^e = +1$) or rather toward the choice of either $g_1^{-1}x$ or~$g_2^{-1}x$
(if~$x^e= -1$).  If a point in the space $X$ has two or more arrows directed to
it, it is {\em congested}; if not, it is {\em uncongested}. The rule is to
direct an arrow, if possible, toward a point that is uncongested (if not
possible or possible in both directions, the rule allows the arrow to be
directed in either direction). We showed that if almost all points follow this
rule, the set of congested points is a subset of measure zero. Let the degree
of $x$ be the total number of potential inward arrows toward $x$, as determined
by the choice of~$-1$ or $+1$ for the $e$ coordinate of its four neighbours by
the four directions $g_1, g_2, g_1^{-1}, g_2^{-1}$.  The argument, that the set
of congested points is of measure zero when the rule is satisfied, was
combinatorial. The structure supporting a congested point requires a continuous
repetition of three or four degree points in an infinite chain of
vertices. Because the average degree of a vertex is two, such a structure is
restricted to a subset of measure zero.  This implied that the rule was
paradoxical, because in at least $\frac 1 {16}$ of the space there was no
possibility of any arrow moving inward (and that probability is slightly higher
than $\frac 1 {16}$, due to some configurations of two or more points with no
possibility of arrows coming in toward that configuration). From every point in
the space there was one arrow going outwards, but going inwards on the average
there could be at most $\frac {15}{16}$ (assuming a measurable structure). This
approach followed an inspiration from combinatorics, inclusion-exclusion. It is
an example of a probability space $(X, {\cal B}, \mu)$ partitioned into $n$
parts $A_1, \dots , A_n$ such that after moving these parts by invertible
measurable preserving transformations $\sigma_1, \dots , \sigma_n$, we have
$\mu (X\backslash (\cup_{i=1}^n \sigma_i A_i) ) >0$ and for every $i\not= j$
$\mu (\sigma_i A_i \cap \sigma_j A_j)=0$.  By inclusion-exclusion this is a
contradiction to measurability.

One could try to repeat the argument with a weighted choice between the two
options (meaning some distribution $(p_1, p_2)$ with $0\leq p_1 \leq 1$ and
$p_1 + p_2=1$), and devise some definition of what is means to be congested and
how congested. With a rule that requires pointing the arrow to the less
congested option, one could hope to reproduce a situation where the average
weight of received arrows would be strictly less than $1$. With discrete
choices there is no option between $1$ arrow coming in and $2$ arrows coming
in.  With weighted choices, however, using the critical weight of
$1+ \frac 1 {16}$ to defined crowded, there are too many ways to distribute
weights so that crowdedness appears throughout typical infinite chains of
connected vertices. We kept the same idea that the differences between the
degrees of vertices is a useful stochastic process, but we had to look for more
sophisticated ways to determine how weights should be distributed.

Our idea was that there should be two kinds of crowdedness, a crowdedness at a
point receiving weights, what we call {\em passive pain}, and another kind of
crowdedness at the location from where weights come, what we call {\em active
  pain}.  The idea was that if $x$ could direct some weight to $y$ it is not
the crowdedness at~$y$, the passive pain, that counts at $x$ but the total
weight sent from~$x$ to $y$ times that passive pain. This has the effect of
distributing the weights more evenly. It also relates the pain levels to
entropy type inequalities, critical to the proof.  Note that the colouring rule
for some~$x$ cannot be determined in any way by the colour of $x$, however we
incorporate into the colour of $y$ a variable that reflects the weight coming
from $x$.  Rather than a combinatorial argument as before, we show that pain,
both active and passive, almost everywhere has to increase on the average along
an infinite chain.  As there is a finite upper limit to the level of both kinds
of pain, that would mean that pain can be sustained only in a set of measure
zero.
                 
We discovered that two choices for distributing weights was not
enough. Though the group generated freely by two independent elements has
arbitrarily many independent choices, to keep the structure simple we equated
generators with choices.  With~$n$ generators, as before, $-1$ for the $e$
coordinate of $x$ means that weight can be sent only in the negative directions
(to $g_1^{-1} x,\dots , g_n ^{-1} x$) and~$+1$ means that weight can be sent
only in the positive directions (to $g_1^{+1} x,\dots , g_n ^{+1} x$).  Define
the degree of a point $y$ in $X$ to be the number of directions from which
weight can be be sent toward $y$.  With~$n$ generators, the degree ranges from
$0$ to $2n$ with an average of $n$. The distribution of degrees is determined
by the binomial expansion. 

No matter how many generators were used, there is always a possibility for both
passive and active pain to decrease. As a general rule, smaller degrees in a
chain means an increase in pain, larger degrees a decrease of pain.  The
break-even degree is exactly $n$.  Conditioned on having reached a point with
some edge, adding one for that connection, the average degree is
$ n + \frac {1} 2$. The product rule for determining active pain means that
when the degree is $n-1$ the resulting increase of pain overweighs the
resulting decrease of pain when the degree is $n+1$. The is because
there is a reciprocal relationship between the 
the weight sent to a vertex and the active pain level, and these weights relate
 to the degree of that vertex.  If a weight of $p$ is
sent from $x$ to a vertex $y$ with a  passive pain level
of $r$, the active pain level at $x$ by choosing $p$ is  $rp$. On the
other hand, if a weight of $q$ is sent to a different vertex $z$ with passive
 pain level $w$, and
 the active pain levels equate, we have $ rp= qw$ or $w= \frac {rp} q$.
  A closely related analogy
is the fact that $\frac 1 {n-1} + \frac 1 {n+1} > \frac 2 n$ and
$\frac 1 {n-1} \cdot \frac 1 {n+1} > \frac 1 {n^2}$; the effect is stronger
for~$\frac 1 {n-k} $ and~$ \frac 1 {n+k}$ when $1< k < n$.  But to exploit this
influence sufficiently we needed a variety of possible degrees below the
average. We found success at $n=5$, after failing at $n=2$ and~$n=3$. At~$n=4$,
some preliminary work suggested some difficulty in formulating a proof.  We
suspect that it can be done with $n=4$, but not as nicely as with~$n=5$.
                    
In the next section we describe the colouring rule. In the third section we
show that this colouring rule is paradoxical, given a stochastic structure and
analysis. In the fourth section, we present a computer program confirming that
stochastic analysis.  In the fifth section we apply the colouring rule to a
problem of local optimisation and show that solutions for small enough
approximations of the colouring rule cannot be measurable with respect to any
proper finitely additive extension.  In conclusion we consider related
problems.

\section{A Probabilistic Colouring Rule} 

Let $G$ be the group freely generated by $T_1, T_2, T_3, T_4, T_5$, and let
$X=\{ -1, 1\}^G$.  For any $x\in X$ and $g\in G$, $x^g$ stands for the $g$
coordinate in $x$. With $e$ the identity in $G$, the $e$ coordinate of $x$ is
$x^e$.  There is a canonical right group action on $X$, namely
$g(x) ^h= x^{gh}$ for every $g,h\in G$. We use the canonical product topology
and probability measure, that giving $2^{-n}$ for every cylinder determined by
particular choices of~$-1$ or $+1$ for any $n$ distinct group elements. With
this Borel probability measure the group $G$ is measure preserving.
 
 \begin{definition}
   The {\em graph} of $X$ is the directed subgraph of the orbit graph of the
   action of~$G$ on $X$ induced by the edge subset
   $$\{ (x, T_i x) \ | \ x^e = +1\} \cup \{ (x, T_i^{-1} x) \ | \ x^e = -1\}.$$
   We orient the graph of $X$ by placing arrows from $x$ to all five of the
   $T_i(x)$ if~$x^e=+1$ and arrows from $x$ pointed to all five of the
   $T^{-1} _i(x)$ if $x^e=-1$.
\end{definition}
   
 The subset of $X$ where $G$ does not act freely has measure zero.  Without
 loss of generality, we will be interested only in those orbits of $G$ and
 connected components of the graph of $X$ where $G$ acts freely.
   
\begin{definition} 
  We define $S(y) := \{ x\ | \ y= T_i^{x^e }(x)\}$ and define $ |S(y)|$ to be
  the {\em degree} of $y$ (the number of neighbours in the graph with arrows
  pointed to $y$).
\end{definition}

The graph of $X$ involves two independent structures of arrows. Each point $x$
has a passive and active role, an active role in one structure and a passive
role in the other. The active and passive roles alternate. We are interested in
that alternation, moving from a point in its passive role to its neighbours in
their active roles, and from a point in its active role to its neighbours in
their passive roles. Every point has an active role, namely a connection to
five different points in their passive roles. The degree of a point concerns
its passive role.  Not every point has a passive role, meaning that they are of
degree zero. The points of degree zero play indirectly a key role in the main
argument.

Every point $x\in X$ has a colour in
$\Delta (\{1,2,3,4,5\} ) \times ([0,1] \times_{z\in S(x)} [0,1]_z)$ where the
dimension of the last part of the colouring is equal to the degree of $x$. The
first part, $\Delta (\{1,2,3,4,5\} ) $, a four-dimensional simplex, we call the
{\em active} part of the colour. The $[0,1] \times_{z\in S(x)} [0,1]_z$ part we
call the {\em passive} part of the colour.
 
What is the colouring rule, which we call ${\bf Q}$?
 
Usually the word "cost" is used to describe a function that should be
minimised. With this example, we prefer the word "pain", because it represents
a situation that should be avoided.  For every direction $i\in \{ 1,2,3,4,5\}$
we define the {\em active pain} for~$x$ to be~$v_i \cdot r_x$ where $v_i$ is
the first coordinate of the passive colour of $y= T^{x^e} _i(x)$ and $r_x$ is
the coordinate corresponding to $x\in S(y)$ in the passive colour of
$y= T^{x^e}_i(x)$. The rule for the active colour of $x$ is to choose those
directions where the active pain is minimised. If more than one are minimal,
then any convex combination of the minimal directions is allowed. The quantity
of the active colour of $x$ given to the $i$ coordinate (in the direction of
$T_i$ or $T_i^{-1}$) is called the {\em weight} given in the direction $i$ or
toward~$y= T^{x^e}_i(x)$.
 
The first part of the passive colouring is called the {\em passive pain}.  The
rule for the first part of the passive colouring is as follows.  Whenever the
sum of the active colours in $S(y)$ moving toward $y$ is less than
$1+2^{-11} $, then~$v=0$ is required by the rule $\bf Q$ for the first passive
coordinate.  If that sum is more than $1+2^{-11} $, then~$v=1$ is required by
the rule ${\bf Q}$.  And if the sum is exactly $1+2^{-11} $ then any value in
$[0,1]$ is acceptable for~$v$.

The rule for the $x\in S(y)$ coordinate of the passive colour is very simple,
it is the copy of the $i$ coordinate of the active part of the point $x$
pointed toward~$y$ such that $y= T^{x^e}_i(x)$.

It is now clear from the colouring rule $\bf Q$, why the $T_i$ and their
inverses should remain measure preserving with any finitely additive
extension. A critical aspect of the colouring rule uses that from any $y$ all
the $x$ such that $x\rightarrow y$ are treated equally, e.g. their weights are
summed without prejudice. The same holds for the active part of the colour,
that each of the five directions are treated equally.

First, it is easy to show, with AC (for uncountable families of sets as there
are uncountably many group orbits), that there is some non-measurable solution
to the rule $\bf Q$ valid almost everywhere. By AC we can choose in each orbit
where $G$ acts freely a special point $x$ to correspond to $e\in G$.  Classify
each point $y$ in the orbit containing $x$ according to the length of the word
in $G$ needed to move from~$x$ to~$y$. Choose a direction from $y$ that
involves a word of one length greater and allowed by the coordinate $y^e$.  As
there is only one possibility for a direction from $y$ corresponding to a word
of one length less (and no such possibility if $y=x$), there will be always an
option to satisfy this requirement. Because one always chooses an arrow from a
point with a shorter word to one with a longer word, it is not possible for two
chosen arrows to be aimed toward the same point.  The end result will be a
colouring satisfying the rule $\bf Q$ where there is no pain, passive or
active.
 
Later, we show that if the rule ${\bf Q}$ is satisfied then all points $y$
where the weight sent to $y$ is greater than $1+\frac 1 {10^{11} }$ is
contained in a Borel set of measure zero.  In this way a kind of paradox is
witnessed by the active part of {\bf any} colouring satisfying the rule without
any additional application of theory. The active part of the colouring can be
seen as a distribution of $1$ in ten different directions such that at least
$\frac 1 {2^10}$ of the space receives no weight at all.  We show, however,
that outside of a set of Borel measure zero, no point receives weight more than
$1+ \frac 1 {2^{11}}$.  This can be seen as a kind of paradox, a measure
preserving flow where the flow out ($1$) is greater than the flow in (no more
than $(1-\frac 1 {2^{10}} ) (1+\frac 1 {2^{11}})$.  Furthermore we will show
that the same kind of paradoxical behaviour holds if the rule $\bf Q$ is
followed to a sufficiently small approximation.

Given what will be proven later, we show that any such colouring generates a
measurable $G$-paradoxical decomposition. From \cite{ST1}, we proved it
suffices to have a finite partition of $X$ (generated from the colouring,
actions of the group, and the Borel sets) for which no proper finitely additive
extension can make all partition members measurable.  First approximate the
weights in all directions by integer multiples of $\frac 1 N$ that add up to
$1$ (according to the $N$ different intervals of values between $0$ and $1$ and
the ten different directions) so that the whole space is broken into finitely
many parts according to the values given to the ten different directions of the
colouring (toward the $T_i$ if $x^e=+1$ and toward the $T_i^{-1}$ if
$x^e = -1$).  If $N$ is large enough, the $N$ copies are shifted in this way
(and invariant measurability is assumed), the total weight coming into the
vertices will remain less than the total weight coming out of the vertices.
After defining $N$ different partitions from this, at least one of them cannot
have a proper finitely additive extension for which all partition members are
measurable. Hence by \cite {ST1} we can generate from this partition two Borel
measurable sets of different measure that are $G$-equi-decomposable.

\section{Paradoxical Colouring}
The goal of this section is to complete the proof of  the following theorem:  
    
\begin{thm}\label{thm:paradox}
The colouring rule ${\bf Q}$ is paradoxical. 
\end{thm}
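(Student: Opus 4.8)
The theorem asserts both that ${\bf Q}$ has a solution defined almost everywhere and that it has none measurable with respect to a proper finitely additive extension. The first is already settled: the word-length construction of Section 2 gives, using the Axiom of Choice on the free orbits, a colouring satisfying ${\bf Q}$ off a null set with no pain whatsoever. So the plan is to establish the second half, and I would do it by a conservation-of-flow contradiction whose only substantial ingredient is the claim announced in Section 2, call it $(\star)$: \emph{if a colouring satisfies ${\bf Q}$ a.e., then the set of $y$ whose total incoming weight exceeds $1 + 2^{-11}$ lies in a Borel null set.}

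Granting $(\star)$, here is how the argument closes. Suppose $c$ satisfies ${\bf Q}$ and is $\bar m$-measurable for a proper finitely additive extension $\bar m$. For $i \in \{1, \dots, 5\}$ let $P_i(x)$ be the $i$-th coordinate of the active colour of $x$ when $x^e = +1$ and $0$ when $x^e = -1$, and $N_i(x)$ the $i$-th coordinate when $x^e = -1$ and $0$ otherwise; these are bounded and $\bar m$-measurable, and since the active colour is always a probability vector, $\sum_{i=1}^{5}(P_i + N_i) \equiv 1$, so $\sum_{i=1}^{5}\big(\int P_i\,d\bar m + \int N_i\,d\bar m\big) = 1$. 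The weight received at $y$ is $\mathrm{in}(y) = \sum_{i=1}^{5} P_i(T_i^{-1} y) + \sum_{i=1}^{5} N_i(T_i y)$ --- the first family records the in-arrow from $T_i^{-1}y$, present exactly when $(T_i^{-1}y)^e = +1$, the second the in-arrow from $T_i y$, present exactly when $(T_i y)^e = -1$ --- so, each $T_i$ and each $T_i^{-1}$ being $\bar m$-preserving, $\int \mathrm{in}\,d\bar m = 1$. On the other hand $\mathrm{in}$ vanishes on the degree-zero set $D_0 = \{\, y : y^{T_i^{-1}} = -1 \text{ and } y^{T_i} = +1 \text{ for } i = 1, \dots, 5 \,\}$, of measure $2^{-10}$ since these ten coordinates are independent, while by $(\star)$ one has $\mathrm{in} \le 1 + 2^{-11}$ $\bar m$-a.e. on $X \setminus D_0$; hence $1 = \int \mathrm{in}\,d\bar m \le (1 - 2^{-10})(1 + 2^{-11}) = 1 - 2^{-11} - 2^{-21} < 1$, a contradiction. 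This already proves the theorem; and, as in \cite{ST1}, rounding the ten weights to multiples of $1/N$ for $N$ large and reading off the resulting finite partition upgrades it to a measurable $G$-paradoxical decomposition.

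The heart of the matter --- and what I expect to be the main obstacle --- is $(\star)$. Following the plan sketched in the introduction, I would show that if the over-threshold set has positive measure then the two pain quantities must, in expectation, increase along chains of the graph of $X$. Concretely: pass to a random bi-infinite chain and alternate between a vertex in its passive role and its predecessors in their active roles, tracking a bounded potential built from the passive pain of a vertex and the active pains it induces, and exploiting the identity ``active pain $=$ (weight sent) $\times$ (passive pain received)'' so that a reciprocal relationship ties the weight a vertex sends in a direction to the active pain it incurs there. Conditioned on having arrived along an edge, the degree of the current vertex is binomially distributed with mean $n + \tfrac12 = 5{.}5$; the convexity inequalities $\tfrac1{n-1} + \tfrac1{n+1} > \tfrac2n$ and $\tfrac1{n-1}\cdot\tfrac1{n+1} > \tfrac1{n^2}$, sharper for $\tfrac1{n-k}$ and $\tfrac1{n+k}$ with $1 < k < n$, then make the pain increase forced at a below-average-degree vertex outweigh the decrease available at an above-average one, giving a strictly positive expected drift of the potential at each alternation off the exceptional set. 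Since the potential is confined to a bounded range (both pain coordinates lie in $[0,1]$), a strictly positive expected increase cannot be sustained along almost every infinite chain, so the over-threshold set is null. Checking that the drift is genuinely positive reduces to a finite verification over the degree configurations at $n = 5$, which is exactly what the computer program of Section 4 certifies; this is why the argument works at $n = 5$ after failing at $n = 2, 3$ and resisting a clean treatment at $n = 4$.

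Two points will require care. First, the invertibility of the $T_i$ --- flagged in the introduction as the genuine difficulty --- means the chains are honest bi-infinite paths, and the conditioning ``a chain passes through $y$ along a prescribed edge'' must be arranged so that the degree of $y$ really carries the claimed binomial law; this bookkeeping is subtler than in the uncountable-to-one example of \cite{ST2}, where satisfaction of the rule with extremal colours was easy to arrange. Second, the borderline vertices receiving incoming weight exactly $1 + 2^{-11}$, where ${\bf Q}$ permits any passive pain in $[0,1]$, must be absorbed into the estimate rather than ignored; I expect them to be harmless since extra passive pain only drives the potential up, but the accounting has to be explicit. Finally, once the drift estimate holds with a fixed positive margin, replaying the same argument with the margin reduced by a small $\varepsilon$ shows that every $\varepsilon$-stable approximate solution still confines the over-threshold set to a null set, which is precisely the input needed in Section 5.
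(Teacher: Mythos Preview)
Your reduction to $(\star)$ and the conservation-of-flow contradiction are correct and match the paper exactly; the identity $\int \mathrm{in}\,d\bar m = 1$ by $G$-invariance, together with $\mathrm{in}=0$ on the degree-zero set of measure $2^{-10}$ and $\mathrm{in}\le 1+2^{-11}$ a.e.\ elsewhere, is precisely the paper's endgame.

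Your plan for $(\star)$ is the right strategy, but the paper's execution exposes two concrete devices your sketch does not anticipate. First, chains are not bi-infinite paths: a chain rooted at $x\to y$ is a branching tree in the direction away from $y$, and branches can \emph{terminate} (hit a degree-one vertex and peter out). Lemma~\ref{lem:terminating} shows terminating subchains contribute zero active pain, and Lemma~\ref{lem:nonterm} computes the non-termination probability $\hat q\approx 0.9916$; only after pruning do the degrees along a surviving path follow the binomial law you invoke, now with parameter $\hat q/2$ rather than $1/2$. Your ``borderline $1+2^{-11}$'' worry is not the real issue here; termination is. Second, the positive drift does \emph{not} follow from the convexity inequalities $\tfrac{1}{n-1}+\tfrac{1}{n+1}>\tfrac{2}{n}$ alone. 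The paper introduces an explicit comparison function $w(p)=\tfrac{p}{1-p}f(p)$ (Section~3.2), runs an ``equalisation process'' with $w$ at each stage, and then proves a Lagrangian lemma (Lemma~\ref{lem:B}) producing a \emph{non-uniform} probability $s\in\Delta(\{1,\dots,4\})$ on the four outgoing directions under which the actual chain minimiser dominates the $w$-process in expectation. Only this carefully chosen path measure turns the problem into a genuine Markov chain with bounded-variance increments, whose drift the computer (Lemma~\ref{lem:K}) then certifies as $\ge 10^{-3}$; Kolmogorov's inequality finishes. Without $w$ and the $s_i$, the computer has no finite object to verify and the convexity heuristic does not close.
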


We have shown already that there is some way to satisfy the rule. To complete
the proof of Theorem~\ref{thm:paradox}, we will show that satisfaction of the
colouring rule implies that the set where the passive and active pain is
positive is contained in a Borel subset of measure zero.  To show that $\bf Q$
is paradoxical, it suffices to show that the set of points where the passive
pain is equal to $1$ is contained in a set of measure strictly less than
$\frac { 2^{-11} }9$.  That would be enough to show that the average weight
moving inward toward all points is strictly less than
$$(1-\frac { 2^{-11} }9- 2^{-10} ) (1+2^{-11}) + 10 \frac { 2^{-11} }9=
1-2^{-21} -\frac {2^{-22}} 9.$$

\subsection{Chains} 

\begin{definition}
  Given that $x\rightarrow y$, meaning that $y= T_i x$ if $x^e=+1$ or
  $y= T_i ^{-1} x$ if~$x^e=-1$, the {\em chain} generated by $x\rightarrow y$
  are all the points $z$ in the graph of $X$ that can be reached from $x$
  without going through $y$ and involve alternating arrows, meaning that if $z$
  is an odd distance from $x$ then the passive role $ z \leftarrow $ connects
  $z$ to $x$ and if $z$ is an even distance from $x$ then the active role
  $ z\rightarrow$ connects $z$ to $x$.
\end{definition}
  
A chain involves an alternating process of using the active and passive
colouring functions of the points.  If $x\rightarrow y$ then $x$ distributes
weights to four other points~$z_1, z_2, z_3, z_4$. In turn, depending on their
degrees, there are further directed edges $x^* \rightarrow z_i$ (or in the rare
case that the degree of each $z_i$ is $1$, no further directed edges).  One
could see a chain as a quarter of a $G$ orbit; the choice of seeing $y$ as
passive or active, and the choice of moving in the $x$ direction rather than in
the direction of the potentially other $x^*$ with $x^* \rightarrow y$.

\begin{definition}
  A {\em $0$-level terminating point} of a chain generated by $x\rightarrow y$
  is any vertex of degree $1$ of positive even distance from $y$.  A {\em $1$st
    level terminating point} of the chain generated by $x\rightarrow y$ is some
  vertex $x^*$ of the chain such that $x^* \rightarrow z$ is a directed edge of
  the chain, $z\not= y$, and $z$ is terminating of level $0$ (equivalently has
  degree $1$).  If $i\geq 2$ is even then an {\em $i$-level terminating} point
  of the chain is some vertex $z$ such $x^*$ is a terminating point of $i-1$ or
  less for every $x^*$ such that $x^* \rightarrow z$,~$x^*$ is further from $y$
  than $z$, and furthermore there is at least one such $x^*$ that is a
  $(i-1)$-level terminating point.  If $i\geq 3$ is odd then an {\em $i$th
    level terminating} point of a chain is some vertex $x^* $ of the chain such
  that $x^* \rightarrow z$ is the last step in the alternating path from $y$ to
  $z$ and $z$ is a terminating point of level~$i-1$.  A terminating point is a
  vertex that is a terminating point of some level.  A chain generated by
  $x\rightarrow y$ is {\em terminating} if $x$ is a terminating point, and its
  terminating level is the terminating level of $x$. If the chain generated by
  $x\rightarrow y$ is not terminating, then we say that the chain and the edge
  $x\rightarrow y$ is non-terminating. The non-terminating part of a
  non-terminating chain is the non-terminating chain with its terminating
  points removed.
\end{definition}

\noi {\bf Remark:} A subchain of a terminating chain may not be a terminating
chain. A subchain of a non-terminating chain may be a terminating chain.  We
could have a terminating chain generated by $x\rightarrow y$ with
$x\rightarrow z_1$, $x$ terminating of level~$1$,~$z_1$ terminating of
level~$0$, and $x\rightarrow z_2$ with $x^* \rightarrow z_2$ generating a
non-terminating chain for some $x^*\neq x$.  Likewise $x\rightarrow y$ could be
non-terminating, $x\rightarrow z$ with $z$ of degree three or more, with
$x^*\rightarrow z$ generating a terminating chain for some $x^* \neq x$.
Likewise if $x\rightarrow y$ is terminating it does not imply that $y$ is a
terminating point for all chains it may belong to, as there could be at least
two edges $x^*\rightarrow y$ with~$x^*\neq x$ such that $x^*\rightarrow y$ is
not a terminating chain.

\begin{lem}\label{lem:terminating}
  If $x\rightarrow y$ generates a terminating chain, then in any colouring of
  $X$ that satisfies the colouring rule ${\bf Q}$, the active pain level at $x$
  is zero, meaning that if the passive pain level at $y$ is positive then no
  weight is given by $x$ toward $y$.
\end{lem}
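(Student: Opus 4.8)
The plan is to prove Lemma~\ref{lem:terminating} by induction on the terminating level of the chain generated by $x \rightarrow y$.

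\medskip

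\noindent\textbf{Proof proposal.} The plan is to induct on the terminating level $\ell$ of the edge $x\rightarrow y$, proving the slightly more general statement: for every vertex $z$ that is a terminating point of level $j$, in any colouring satisfying $\bf Q$ the relevant pain at $z$ is zero --- the active pain if $j$ is odd (so $z$ is in an active role, $z\rightarrow$), and the passive pain if $j$ is even (so $z$ is in a passive role, $\leftarrow z$). Lemma~\ref{lem:terminating} is then the odd case applied to $x$ itself.

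\medskip

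First I would handle the base cases. For $j=0$: a $0$-level terminating point $z$ has degree $1$, so $S(z)$ is a singleton and the total weight sent toward $z$ is at most $1 < 1 + 2^{-11}$; by the rule for the first passive coordinate, $\bf Q$ forces the passive pain $v$ at $z$ to be $0$. For $j=1$: if $x^*$ is a $1$-level terminating point, then $x^* \rightarrow z$ for some $z$ of degree $1$ (a $0$-level terminating point with $z\neq y$). By the $j=0$ case the passive pain at $z$ is zero, hence the active pain contribution $v_i\cdot r_{x^*}$ from the direction $i$ leading to $z$ is zero (its first factor $v_i$, the passive pain at $z$, vanishes). Since the active pain in direction $i$ is $0$, the minimum active pain over all five directions of $x^*$ is $0$, so the rule $\bf Q$ permits (and, to have active pain zero, requires that $x^*$ place all weight on zero-active-pain directions) --- the point is that the \emph{active pain level} at $x^*$, being the minimum, equals $0$.

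\medskip

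For the inductive step I would treat the even and odd levels separately, mirroring the definition. If $i\geq 3$ is odd and $x^*$ is an $i$-level terminating point, then $x^*\rightarrow z$ is the last step of the path from $y$, and $z$ is terminating of level $i-1$ (even); by induction the passive pain at $z$ is zero, so the active pain in that direction is zero, hence the active pain level at $x^*$ is zero, as above. If $i\geq 2$ is even and $z$ is an $i$-level terminating point, then for \emph{every} $x^*$ with $x^*\rightarrow z$ that is farther from $y$ than $z$, $x^*$ is terminating of level $\leq i-1$, and at least one is exactly $(i-1)$-level terminating; all these $x^*$ occupy active roles relative to $z$, so by induction each has active pain level zero, which forces each such $x^*$ to send zero weight toward $z$ (this is precisely the equivalence ``active pain level zero $\Leftrightarrow$ no weight toward a positive-passive-pain target'' --- and if the passive pain at $z$ were $0$ we are already done). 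Now comes the key arithmetic point: the \emph{only} vertex pointing to $z$ that is \emph{not} required to be such an $x^*$ is the unique neighbour of $z$ that lies between $y$ and $z$ (one step closer to $y$); call it $w$. Since $w$ sends weight at most $1$ and all other in-neighbours of $z$ send weight $0$, the total weight into $z$ is at most $1 < 1+2^{-11}$, so by $\bf Q$ the passive pain at $z$ is forced to $0$, completing the induction.

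\medskip

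\noindent\textbf{Main obstacle.} I expect the crux to be the bookkeeping in the even step: verifying that \emph{every} in-neighbour of an even-level terminating point $z$, other than the single one on the $y$-side, is forced to be a lower-level terminating point and hence sends zero weight --- this rests on reading the terminating-point definition carefully (``$x^*$ farther from $y$ than $z$'' ranges over \emph{all} such in-neighbours) together with the tree structure of the orbit (exactly one neighbour of $z$ is closer to $y$). A secondary subtlety is keeping straight that ``active pain level zero'' is a statement about the \emph{minimum} over the five directions, and invoking the lemma's own final clause (``if the passive pain at $y$ is positive then no weight is given by $x$ toward $y$'') as the bridge between zero active pain at a vertex and zero weight along any edge to a painful target. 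No heavy computation is involved; the work is entirely in the careful unwinding of the nested definition.
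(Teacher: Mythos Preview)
Your proposal is correct and follows essentially the same route as the paper's proof: induction on the terminating level, showing that even-level terminating points have zero passive pain and odd-level terminating points have zero active pain, with the even step handled by the contradiction ``if the passive pain were positive, all far-side in-neighbours (having zero active pain by induction) would send zero weight, leaving at most one contributor and hence total weight $\leq 1$.'' The paper's argument is the same, only phrased slightly more tersely.
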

\begin{proof}
  We prove the lemma by induction on the level of the terminating points; we
  claim that any terminating point of even level experiences no passive pain
  and any terminating point of odd level experiences no active pain.  Suppose
  that $x^* \rightarrow y^*$ is part of the chain and $y^*$ is terminating of
  level~$0$. As~$y^*$ is degree one, it is not possible for $y^*$ to experience
  passive pain, since the maximal weight sent to $y^*$ is at most $1$. By
  choosing all weight to $y^*$ there is no resulting active pain, and since the
  rule $\bf Q$ requires that $x^*$ minimise the active pain, $x^*$ cannot
  experience active pain in any direction. We notice that such an $x^*$ is a
  terminating point of level $1$. We continue with the induction assumption.
  Suppose $y^*$ is a terminating point of even level $i$ with $x^*$ the vertex
  such that $x^*\rightarrow y^*$ and the path from $x$ to $y^*$ passes through
  $x^*$.  If $y^*$ was experiencing any passive pain, all the points
  $\hat x \rightarrow y^*$ such that~$\hat x \neq x^*$ would give zero weight
  to $y^*$, since by induction they all experience no active pain. And with
  only one vertex $x^*$ possibly giving weight to $y^*$, it is impossible for
  $y^*$ to experience any passive pain, a contradiction. But then~$x^*$ does
  not experience any active pain either, because it could put all weight
  toward~$y^*$, with the result of no active pain.
\end{proof}

We describe a structure essential to our following stochastic
arguments. Instead of looking at some $x$ according to its topological location
in $X$, we think of $x$ as a member of a chain. Given that $x$ sends the weight
$p>0$ to~$y$ with~$t>0$ the passive pain level at $y$, we consider what the
passive pain levels $t_i$ must be at the four $z_i$ with $x\rightarrow z_i$ for
all $i=1,2,3,4$ so that the active pain levels for each of the five choices are
equal (through satisfying the rule ${\bf Q}$ ).  If $p_i$ is the weight sent
from $x$ to~$z_i$, we must have the equations~$p_i t_i = pt$ for each
$i=1,2,3,4$.  With the equations~$t_i = \frac {pt} {p_i}$, we also consider the
degrees of the $z_i$ and how these quantities continue in further branches in
the chain generated by $x\rightarrow y$. We want to show that almost
everywhere, given~$t>0$, the pain values in further stages must be
unbounded. Since these values cannot exceed $1$, we have shown that positive
passive pain happens only in a subset of measure zero.  It is a kind of reverse
engineering, determining what pain values must exist as implied by the rule
${\bf Q}$.  In this analysis we do not focus on directional choices determined
by the $e$ coordinate; we look instead on the degrees of the vertices of odd
distance to $x$ (even distance from $y$). We use that the probability of
degree~$k$ is~${9\choose {k-1}} 2^{-9} $. It does not follow the
formula~${10\choose {k}} 2^{-10} $ because we condition on the existence of a
particular edge coming toward the vertex.

An important first step toward the main argument is to eliminate all
terminating chains from the analysis and look at only non-terminating chains
and their non-terminating parts.  The following Lemma~ does this.

\begin{lem}\label{lem:nonterm}
  The probability that $x\rightarrow y$ generates a non-terminating chain is
  approximately $\hat q=.991603 $.
\end{lem}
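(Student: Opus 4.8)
The plan is to realise the chain generated by $x\to y$ as the family tree of a two-type Galton--Watson branching process and to obtain the non-termination probability as the complement of the least root of the resulting offspring fixed-point equations.

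We work, as we may, inside an orbit on which $G$ acts freely. On such an orbit the unoriented orbit graph is the $10$-regular Cayley tree of $G$, and the directed graph of $X$ is a deterministic function of the family $(v^e)_v$ of $e$-coordinates of the orbit points --- a directed edge $u\to v$ being present exactly when $v$ is one of the five Cayley-neighbours of $u$ on the side selected by $u^e$ --- and under $m$ these coordinates are independent fair coin tosses. Consequently a chain is a genuine rooted tree whose two vertex kinds, \emph{active} (even distance from $x$, contributing its out-edges) and \emph{passive} (odd distance from $x$, contributing its in-edges), strictly alternate, and the offspring counts are: the root $x$ is active with exactly $4$ children (its $5$ out-neighbours other than $y$), all passive; every other active vertex $w$ likewise has exactly $4$ children (its $5$ out-neighbours other than the passive vertex from which $w$ was reached); and a passive vertex $z$ has $|S(z)|-1$ children (its in-neighbours other than the active vertex from which $z$ was reached), whose number, after conditioning on that single in-edge, is distributed as $\mathrm{Bin}(9,\tfrac12)$ --- this being precisely the identity $\Pr[\,|S(z)|=k\,]=\binom{9}{k-1}2^{-9}$ quoted in the text. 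Because the orbit graph is a tree, the $e$-coordinates controlling the subtrees below distinct chain vertices are disjoint, with the single exception of the coordinate $w^e$ of an active vertex $w$, which is seen both by the passive parent and by the passive children of $w$; but there its value is forced by the chain's edges rather than a fresh coin, so the subtrees are genuinely i.i.d.

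I would then translate the notion of a terminating point into a recursion on this tree. Directly from the definition, an active vertex is a terminating point iff at least one of its four children is a terminating point, while a passive vertex is a terminating point iff all of its children are terminating points (a degree-$1$ passive vertex being terminating of level $0$ vacuously). Writing $\alpha$ and $\beta$ for the probabilities that, respectively, an active and a passive chain vertex is a terminating point, this gives
$$\alpha \;=\; 1-(1-\beta)^4, \qquad \beta \;=\; \sum_{j=0}^{9}\binom{9}{j}2^{-9}\alpha^{j} \;=\; \Bigl(\frac{1+\alpha}{2}\Bigr)^{9}.$$
Since a vertex is a terminating point precisely when it is terminating of some finite level, $\alpha$ and $\beta$ are the increasing limits of the probabilities of being terminating of level at most $n$, so by monotone convergence $(\alpha,\beta)$ is the \emph{least} solution of this system in $[0,1]^2$; equivalently $\alpha$ is the limit of the iterates of $t\mapsto 1-\bigl(1-((1+t)/2)^9\bigr)^4$ started from $t=0$. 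As the root $x$ is active with four passive children, $\Pr[x\to y\text{ generates a terminating chain}]=\alpha$, hence the probability of a non-terminating chain is $\hat q=1-\alpha=(1-\beta)^4$; solving this one-variable fixed-point equation numerically gives $\alpha\approx 0.008397$ and $\hat q\approx 0.991603$.

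The step I expect to cost the most work is the claim, used in the second paragraph, that growing the chain is a bona fide two-type Galton--Watson exploration with independent, identically distributed offspring obeying the stated laws. Making this precise requires a careful account of exactly which coordinates of a point of $X$ are inspected as the chain is built, together with an appeal to the tree structure of the free group $G$ to show that no coordinate is genuinely reused --- the only recurrences being the forced reappearances of an $e$-coordinate $w^e$ among the passive neighbours of an active vertex $w$. Everything downstream of that --- reading off the offspring laws, forming the two fixed-point equations, selecting the least fixed point, and evaluating it numerically --- is routine.
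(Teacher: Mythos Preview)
Your proposal is correct and is essentially the paper's own argument, recast in the language of a two-type Galton--Watson process. Substituting $\hat q=1-\alpha$ and $\beta=\bigl(\tfrac{1+\alpha}{2}\bigr)^9=(1-\tfrac{\hat q}{2})^9$ into your system yields exactly the paper's fixed-point equation $\hat q=\bigl(1-(1-\tfrac{\hat q}{2})^9\bigr)^4$; the paper simply writes this recursion down directly and solves it numerically, while you spell out the branching structure and justify picking the relevant root via monotone convergence rather than by inspection.
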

\begin{proof}
  Because of the homogeneous structure to the space, there is a recursive
  formula for the value of $\hat q$.  Given that $x \rightarrow z$ and
  $z\neq y$ and $x^* \rightarrow z$ with $x^* \neq x$,~$\hat q$ is also the
  probability that $x^*$ is not a terminating point of the chain. The
  probability that $z$ is a terminating point is $(1-\frac {\hat q}2) ^9$,
  hence the probability of it not being a terminating point is
  $1- (1-\frac {\hat q}2) ^9$. For $x$ to be a non-terminating point each of
  the four such~$z$ must fail to be terminating points. Therefore $\hat q$ is
  the root of the polynomial $q= (1- (1-\frac q2) ^9)^4$.  Applying Wolfram
  Alpha, the largest root of this polynomial strictly less than $1$ is
  approximately ~$\hat q=.991603$.
\end{proof}

By its definition, we notice that
$\hat q = \sum _ {1\leq j_i\leq 9} \prod_{i=1} ^4 { 9\choose j_i} (\frac {\hat
  q} 2) ^{j_1} (1-\frac {\hat q}2)^{9-j_i}$.

In what follows, we will assume that all terminating points are removed so that
the probability distribution on the degrees in a chain follow the binomial
expansion applied to $\frac {\hat q}2$ and $1-\frac {\hat q} 2$ (instead of
$\frac 12$ and $\frac 12$) and then conditioned to the probability~$\hat q$.
Because a terminating point is a terminating point of some finite level, and
the terminating points of a fixed finite level define an open set, the
non-terminating points form a closed subset of $X$. Likewise the space of
non-terminating chains is a compact space with a probability distribution
determined by the special value of~$\hat q$. Along with a choice for a weight
of $p$ given by some $x$ to $y$ at the start of the chain, this topology
defines a collection of Borel sets on the space of chains.

\begin{definition}
  Given a non-terminating chain generated by $x \rightarrow y$, any
  \mbox{$p\in(0,1)$}, and any colouring $c$ of that chain satisfying the rule
  $\bf Q$ with $p$ the weight of $x$ given to~$y$ and $1$ the passive pain at
  $y$, define $t( x\rightarrow y, c, p)$ to be the supremum of the active pain
  in the colouring $c$ in that chain.
\end{definition}
  
Because we are concerned with the ratio to the quantity $1$, e.g., the passive
pain at $y$ could be some very small positive $v$, we allow for values above
$1$, although strictly speaking there can never be pain, passive or active,
above the level of $1$.  By definition, $t(x\rightarrow y, c,p)$ is no less
than the active pain at $x$, which is $p$.

\begin{definition} 
  Define $u (x\rightarrow y,p) $ to be the infimum of $t(x\rightarrow y, c, p)$
  over all the colourings $c$ satisfying the rule $\bf Q$.  The value
  $u(x\rightarrow y, p)$ is called the {\em chain minimum} with respect to
  $p$. This value can be infinite and we will show that for all positive $p$ it
  is almost everywhere infinite.
\end{definition} 
  
It is straightforward that $u(x\rightarrow y, p)$ is increasing in $p$.
\begin{prop}\label{prop:infinite}
  The function $u(x\rightarrow y, p)$ is infinite for all~$p>0$ and almost all~$x\rightarrow y$.
\end{prop}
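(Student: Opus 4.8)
The plan is to set up a stochastic recursion for the chain minimum and show it forces the pain values to grow without bound along almost every chain. Fix $p > 0$ and suppose, for contradiction, that on a set of chains of positive measure we have $u(x \rightarrow y, p) \le M$ for some finite $M$. Because $u$ is increasing in $p$, and the space of non-terminating chains carries the compact topology and the $\hat q$-modified binomial distribution on degrees described above, I would first pass to a positive-measure ``good set'' of chains on which a uniform bound $u(x \rightarrow y, p) \le M$ holds, and then exploit the self-similar (branching) structure: a non-terminating chain generated by $x \rightarrow y$ branches into the four edges $x \rightarrow z_i$, and each $z_i$ (after conditioning on the incoming edge) has degree $k$ with probability ${9 \choose k-1} 2^{-9}$, rescaled to the non-terminating subspace. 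The constraint from rule $\bf Q$ is the system $p_i t_i = p t$ with $\sum_i p_i = p$ (the weight $x$ sends to $y$ plus the four weights to the $z_i$ sum to the active colour, normalized to $1$ as a distribution, so really $p + \sum_{i=1}^4 p_i = 1$), giving $t_i = p t / p_i$. Since the $p_i$ must be nonnegative and sum to at most $1 - p$, at least one $p_i \le (1-p)/4$, hence the corresponding $t_i \ge 4pt/(1-p)$; more to the point, the \emph{average} of the reciprocals $1/p_i$ is large, and this is where the entropy/reciprocal inequality flagged in the introduction ($\frac{1}{n-1} + \frac{1}{n+1} > \frac{2}{n}$, with the multiplicative analogue) enters.

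The core step is to define, for a non-terminating chain and the weight $p$, the quantity that the recursion acts on — I would work with the passive pain $t$ at $y$ together with the weight $p$, and track the pair $(p, t)$ (or a suitable scalar combination such as $p \cdot t$ or $\log(1/p) + \log t$) as one moves one step deeper into the chain. At a vertex of degree $k$, the incoming weight splits among $k$ downstream active choices; the product relation $p_i t_i = pt$ means the downstream pain $t_i$ is inversely proportional to the downstream weight $p_i$, and the rule forces $x$ to equalize active pain across its five directions, so in fact \emph{all} five products $p_j t_j$ (including toward $y$) coincide. The key inequality to establish is that the conditional expectation of the ``deepened'' pain quantity, given the current value and given that the chain continues (non-terminating), strictly exceeds the current value by a fixed positive amount $\delta > 0$ whenever the current value lies below some threshold. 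This is a submartingale-type estimate: one computes $\mathbb{E}[\text{next pain} \mid \text{current}]$ by summing over the degree distribution $\{{9 \choose k-1} 2^{-9}\}$ and over the optimal weight splits, and uses convexity of $x \mapsto 1/x$ together with the fact that the break-even degree is $5$ while the conditional expected degree is $5 + \tfrac12$ — the extra half-edge is exactly what tips the reciprocal average above break-even. (This is the calculation the introduction says succeeds at $n=5$.)

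Once the strict drift $\mathbb{E}[\text{next} \mid \text{current} < \text{threshold}] \ge \text{current} + \delta$ is in hand, the conclusion is a standard Borel--Cantelli / martingale argument: along almost every infinite non-terminating chain the pain quantity must exceed any finite bound infinitely often, because a bounded process with uniform positive upward drift below a threshold cannot stay bounded — formally, apply the optional stopping theorem to the stopped submartingale, or observe that the event ``pain stays $\le M$ forever along the chain'' has probability zero by a direct summation over generations using the branching structure (each generation independently contributes a fixed positive probability of a degree-$\le 4$ step that multiplies the pain up). Since actual pain cannot exceed $1$, the chain minimum $u(x \rightarrow y, p)$ — the infimum over $\bf Q$-satisfying colourings of the supremum of active pain — is forced to be infinite (i.e.\ unrealizable with pain $\le 1$) except on a measure-zero set, contradicting the assumed positive-measure bound. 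The main obstacle, and the place requiring genuine care, is the middle step: proving the strict drift inequality uniformly, since the downstream weights $p_i$ are \emph{chosen} by the colouring (the adversary in the infimum defining $u$) subject only to $p_i t_i = pt$ and $\sum p_i \le 1 - p$, so one must show that \emph{no} choice of splits can avoid the drift — this is where the ``active pain = weight $\times$ passive pain'' product rule, which couples small weights to large downstream pain, does the essential work, and where the constant $n = 5$ versus the marginal cases $n = 2, 3, 4$ is decisive.
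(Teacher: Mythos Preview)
Your general architecture matches the paper's: a stochastic process along chains, a drift inequality, and a martingale-type conclusion (the paper uses Kolmogorov's inequality). But the proposal leaves unresolved precisely the step you yourself flag as ``the main obstacle,'' and the paper's resolution of it involves ingredients your sketch does not supply.

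First, you propose to track ``$p \cdot t$ or $\log(1/p) + \log t$'' directly. The paper cannot do this, because the function $u(p)$ that would make this work is exactly the unknown (and, as the proposition asserts, ultimately infinite) quantity. Instead the paper introduces an explicit surrogate $w(p) = \frac{p}{1-p} f(p)$, built from an infinite product modelling the all-degree-$5$ case, with hand-tuned $C^2$ exponential corrections near $p=0$ and $p=1$. The quantity tracked is $\phi^w_i(\omega) = \log w(p_i) + \sum_{k<i} \log p_k - \sum_{k \le i} \log q_k$, and the argument splits $\phi^w_i$ into a piece $\overline{\phi^w_i}$ coming from an ``equalisation process'' driven by $w$ and a remainder $\tilde{\phi^w_i}$ shown separately to have nonnegative expectation via convexity of $x\,w(x)$ (Lemma~\ref{lem:D}(c)).

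Second, and more seriously, your worry that ``the downstream weights $p_i$ are \emph{chosen} by the colouring'' is exactly right, and your proposal does not say how to defeat that adversary. The paper's device is Lemma~\ref{lem:B}: for each degree configuration $(j_1,\dots,j_4)$ and each $p$ there is a \emph{specific} probability vector $s \in \Delta(\{1,\dots,4\})$, constructed from Lagrange multipliers at the $w$-equalisation solution $q$, such that $\sum_i s_i \log\!\big(w(\tfrac{1-q_i^*}{j_i})/q_i^*\big)$ is minimised over \emph{all} admissible $q^*$ at $q^* = q$. This is what converts the adversarial infimum into a fixed-weight expectation and hence defines the path measure on ${\cal P}$. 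Proving uniqueness of that critical point takes a multi-case analysis (Cases 1, 2, 3A, 3Bi, 3Bii) that in turn relies on a priori bounds on the $q_i$ (Lemma~\ref{lem:J}) which are themselves verified by computer.

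Third, the drift itself---your ``$\delta > 0$''---is not delivered by the heuristic ``$\tfrac{1}{n-1}+\tfrac{1}{n+1} > \tfrac{2}{n}$ plus conditional expected degree $5{+}\tfrac12$.'' The paper discretises $p$ to multiples of $1/20{,}000$, computes $\overline r(j_1,\dots,j_4)$ for every configuration, and obtains a conditional expectation of only about $1/1000$ (Lemma~\ref{lem:K}), a margin so thin that it is established by a numerical sweep (Section~\ref{sec:numerical}). The introduction says explicitly that the approach fails for $n=2,3$ and is marginal at $n=4$; the half-edge intuition you cite is the motivation, not the proof.

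In short: right skeleton, but the three load-bearing pieces---the surrogate $w$, the Lagrangian path measure of Lemma~\ref{lem:B}, and the computer-verified drift of Lemma~\ref{lem:K}---are where essentially all the work lies, and none of them is present in the proposal.
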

 
Proposition~\ref{prop:infinite} implies Theorem~\ref{thm:paradox}.  As both
passive and active pain cannot exceed~$1$, Proposition~\ref{prop:infinite}
implies that the places where the pain, passive or active, is positive is a set
of measure zero.  To prove Theorem~\ref{thm:paradox}, it suffices to show this
for all $p> \frac 1 {10}$, as any point with passive pain~$1$ must be next to
some vertex pointed toward it with weight more than $\frac 1 {10}$.
                  
Let's look at the chain generated by $x\rightarrow y$, where $x\rightarrow z_i$
for $i=1,2,3,4$ and $x^*$ is another vertex where $x^* \rightarrow z_i\neq y$
for some $i$. Assume that $p$ is the weight given by $x$ toward $y$. The
function $u(x\rightarrow y, p)$ is discovered by equalising the
$u(x^* \rightarrow z_i,p^*)\cdot v_i$ over all the choices for weights $q_{i} $
from the $x^*\neq x$ to the various~$z_i$ and by the weights $p^*$ sent to the
$z_i$ from the $x^*$ and the corresponding induced passive pain levels $v_i$
for the $z_i$ following the equality $v_i q_i =p$. Notice that some
$u(x^* \rightarrow z_i,p^*)$ could be infinite, in which case the $p^*$ could
be zero.  Either~$p$ is strictly more than this common equal value, in which
case $p= u( x\rightarrow y, p)$, or~$p$ is less than or equal to this common
equal value, in which case $u(x\rightarrow y, p)$ is that common value.  This
follows by monotonicity, that dividing by $q$ is strictly decreasing in $q$ and
that $u(x\rightarrow y, p)$ is strictly increasing in $p$.  The same holds true
for all the $x^* \rightarrow y^*$ further in the chain.  This means that the
$u(x^*\rightarrow y^*, p^*)$ times the induced passive pain levels, when these
values are finite, form a super-martingale through the minimising process (with
potentially decreasing future values).

\begin{definition}
  Given that $u(x\rightarrow y, p)$ is finite, we define the colouring
  resulting from finding equality at each step (as described above) the {\em
    chain minimiser}. This holds if the minimum is reached with $p$ at the
  start or any other location on the chain, with the chain minimiser finding
  the minimum for the following part of the chain.  If $u(x\rightarrow y, p)$
  is infinite, then the chain minimiser is the result of the same process of
  finding equality, but with $p^*$ replacing $u(x^*\rightarrow y^*, p^*)$ in
  the calculations.
\end{definition}

\begin{lem}\label{lem:A}
  The value of $u(x\rightarrow y, p)$ and the chain minimiser used to define it
  are Borel measurable functions of the future degrees and $p$.
\end{lem}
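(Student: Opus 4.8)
The plan is to realise both $u(x\rightarrow y,p)$ and the chain minimiser as pointwise limits of finite-depth quantities that are manifestly Borel, and then to identify the limit with the objects defined abstractly as an infimum over colourings. First note that the ``future degrees'' of a non-terminating chain generated by $x\rightarrow y$ are naturally indexed by the vertices of an infinite rooted tree of bounded branching (the four children $z_1,\dots,z_4$ of the root $x$, the at most nine further active vertices $x^*\rightarrow z_i$ at each $z_i$, and so on), so the set of admissible degree-labelled trees is a closed subset of a countable product of finite sets, hence a compact metrizable, in particular standard Borel, space; with the coordinate $p$ adjoined this is the domain on which measurability is claimed.

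Next I would introduce the finite-depth truncations. For $n\geq 0$ let $u_n(x\rightarrow y,p)$ be the infimum of the supremum of active pain over all colourings of the vertices within distance $n$ of $x$ that satisfy rule ${\bf Q}$ at every vertex far enough from the boundary (with $p$ the weight $x$ gives to $y$ and $1$ the passive pain at $y$). By the equalisation argument indicated in the discussion preceding the definition of the chain minimiser, this infimum is computed by a finite recursion: $u_0=p$, and $u_n$ is obtained from $u_{n-1}$ by one round of that process --- choose weights $p_1,\dots,p_4$ with $p+\sum_i p_i=1$, set $v_i=p/p_i$, and over the remaining choices (the weights into each $z_i$ from its other incoming vertices and the weights $p^*$ they send on) equalise the quantities $u_{n-1}(x^*\rightarrow z_i,p^*)\cdot v_i$, finally taking the maximum with $p$. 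For each fixed pattern of the finitely many degrees occurring down to depth $n$ this is the optimisation, over a compact polytope of weight vectors, of a function built by composing the continuous operations product, quotient, $\min$, $\max$ and $u_{n-1}$; hence $u_n$ is a Borel (indeed piecewise continuous) function of those degrees and of $p$, and weights realising the equality at each stage can be selected as a Borel function of the same data.

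Then I would pass to the limit. The one-step operator is monotone in the subchain values and $u_1\geq p=u_0$, so $u_n$ is non-decreasing in $n$ and $\bar u:=\sup_n u_n$ is a Borel $[0,\infty]$-valued function. The inequality $\sup_n u_n\leq u(x\rightarrow y,p)$ is immediate by restricting an optimal (or near-optimal) rule-${\bf Q}$ colouring of the whole chain to the first $n$ levels. For the reverse inequality I would use that the rule-${\bf Q}$ colourings of a fixed chain form a closed, hence compact, subset of the product over its countably many vertices of their compact convex colour sets --- closedness coming from the upper-semicontinuity and convex-valuedness of the correspondence defining ${\bf Q}$ --- while the active-pain functionals are continuous, so a subsequence of suitably extended depth-$n$ minimisers converges to a rule-${\bf Q}$ colouring of the whole chain with supremum of active pain at most $\bar u$. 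Thus $u(x\rightarrow y,p)=\sup_n u_n$ is Borel. Finally, where $u(x\rightarrow y,p)$ is finite the colour the chain minimiser assigns to any vertex is produced by the equalisation through a composition of the finite-stage Borel selections above (applied to the degrees from $x$ out to that vertex together with $p$), so the chain minimiser is a Borel map into the compact metric space of colourings of the chain; on $\{u=\infty\}$ the same recursion with $p^*$ in place of the infinite values $u(x^*\rightarrow y^*,p^*)$ is again a composition of Borel operations.

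I expect the main obstacle to be exactly this limit passage: confirming that $\sup_n u_n$ really is the chain minimum, i.e. that the depth-$n$ minimisers can be extended and passed to a limit to produce an honest rule-${\bf Q}$ colouring of the whole chain, and that the equalisation performed at each finite stage is compatible with the linear scaling of pain by the passive pains $v_i$. This rests on compactness of the colouring space and continuity of the active-pain functionals. Alternatively, if one takes the equalisation recursion itself as the defining property of $u$ (its least fixed point), then $\sup_n u_n=u$ follows from the Knaster--Tarski/Kleene fixed-point theorem, and only monotonicity of the one-step operator and Borel-measurability of the finite equalisation --- both routine --- remain to be checked.
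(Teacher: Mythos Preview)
Your proposal is correct and follows essentially the same route as the paper: define finite-depth truncations $u_n$ that are continuous in the data, observe they increase in $n$, use compactness of the space of rule-${\bf Q}$ colourings to pass a subsequence of depth-$n$ minimisers to a limit colouring and thereby identify $\sup_n u_n$ with $u$, and then obtain the chain minimiser stage by stage from the equalisation equations once $u$ is known to be Borel. The paper's argument is terser (it phrases the limit passage as a contradiction along a sequence $p_i\to p$), but the compactness mechanism and the inductive handling of the chain minimiser are the same; your additional remarks on the domain being standard Borel and the Knaster--Tarski alternative are sound elaborations rather than a different strategy.
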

\begin{proof}
  For every $i$ let $t_i( x\rightarrow y, c, p)$ be the maximum active pain of
  any $x^* \rightarrow y^*$ of distance $i$ or less from $x\rightarrow y$ with
  the colouring $c$ and $p$ the weight given by~$x$ to~$y$.  Let
  $u_i(x\rightarrow y,p)$ be the corresponding minimum over the various $c$.
  As a function of $p$, the degrees, and $c$, the $t_i$ is continuous.  Notice
  that $u_i (x\rightarrow y,p)$ are non-decreasing functions and is always less
  than or equal to $u (x\rightarrow p)$ for every $i$. Notice also that if
  there were a sequence of $p_i$ converging to $p$ and the sequence of
  $u_i (x\rightarrow y, p_i)$ were converging to a value strictly less than
  $u( x\rightarrow y, p)$ then the colourings $c_i$ associated with these
  solutions would have a subsequence converging pointwise to some colouring $c$
  with $t (x\rightarrow y, c, p) $ strictly less than $u( x\rightarrow y, p)$,
  a contradiction.  Hence where it is finite, $u(x\rightarrow y,p)$ is the
  pointwise limit of increasing continuous functions, hence it is Borel
  measurable.  Return to definition of the chain minimiser at the start.  If
  $u(x\rightarrow y,p)$ is infinite, then because it is defined only on the
  $p^*$ of the next stage, the chain minimiser there is Borel measurable.
  Returning to $x\rightarrow y$ and the functions $u(x^* \rightarrow y^*, p^*)$
  from the next stage continuations, because they are Borel measurable the
  function where the equality holds for the different weights is Borel
  measurable and the set where this equality is obtained is a Borel measurable
  set. Since the equalities are unique solutions and the inverse image of a
  Borel measurable set is a Borel measurable set, the colouring that define
  these equalities are also Borel measurable. We proceed by induction on the
  stages.
\end{proof}

\begin{definition}
  Define $u(p)$ to be the greatest lower bound of all $r$ such that $r$ is
  greater than $u(p, x\rightarrow y)$ for some set of $x\rightarrow y$ of
  positive measure.
\end{definition}
  
In the above process of equalising, we would like to minimise using the
function~$u(p^*)$ instead of the $u(x^* \rightarrow y^*,p^*)$.  Of course the
actual process of pain minimisation could look very different because
$u(x^* \rightarrow y^*, p^*)$ may be much larger than $u(p^*)$. But if we
minimise with this assumption, we obtain a result which is not higher than the
proper result almost everywhere.  Our goal is to show that with this
assumption, by applying the rule of minimising the $u(p^*) \cdot v^*$ level on
the next stage, and applying this evaluation stage after stage, the resulting
limit superior of the product is infinite almost everywhere. It follows that
the real value must be infinite almost everywhere also.

Unfortunately we do not know the function~$u$ explicitly, and ultimately we
will show that it is infinite for all positive values; therefore the function
$u$ cannot be of practical use.  So instead of minimising according to $u$, we
introduce some new function $w$ to replace the function $u$, with $w$ finite
everywhere.

There are two useful facts from the replacement of
$u(x^* \rightarrow z_i, p^*)$ by the uniform functions $u(p^*)$ or $w(p^*)$
(should the former be finite). We explain this with the function $w$, which
ultimately we will use.

First, if one aims to minimise active pain levels according to $w(p^*)$ times
the passive pain level of $z_i$ on the next stage (where $p^*$ is the weight
given by some $x^*$ to~$z_i$), because $w$ is a increasing function, it is
sufficient to uses equal weights $p^*$ from all the $ x^*\neq x$ such that
$ x^* \rightarrow z$.

The second fact, following from the equal weights consequence, is by minimising
we do not have to consider any weight $p$ given to some $z_i$ by some
$x^*\not= x$ such that~$p< \frac 1 {100}$. As we already assumed that all the
weights on the other side of~$z_i$ were equal, if $p< \frac 1 {100} $ were
these common weights it means that the weight from~$x$ to~$z_i$ is at least
$.91$.  This means that the weights from $x$ to the other $z_j$ with
$z_j\neq y$ add up to no more than~$0.09$. Therefore the passive pain level at
these other $z_j\neq z_i,y$ are at least~$10$ times that of $z_i$. Assuming
that this is not a terminating chain, there is a $\overline x$ giving weight of
at least $\frac 1 {10}$ to one of these $z_j$. From the monotonicity of $w$ the
chain $\overline x \rightarrow z_j$ receives at least ten times the level of
the chain $\hat x \rightarrow z_i$.  This means that we could redistribute the
weights coming from $x$, giving more to the other $z_j$ and less to $z_i$ with
a reduction in the level as determined by the function~$w$.  We discovered from
our computer calculations that this bound of $0.01$ could be replaced by
$0.055$.

\begin{definition}
  Let ${\cal C}$ be the collection of non-terminating chains with their
  roots. Let ${\cal P}$ be the set of infinite paths in the chains starting at
  their roots.
\end{definition}
    
The set ${\cal C}$ should be understood as the choices of degrees at each stage
and in each position.  The set ${\cal C}$ is a compact set, as the removal of
terminating points is the removal of an open set.  The probability $\hat q$
(approximately $.991603$) determines whether or not a potential edge exists,
and non-terminating implying that after the removal of the terminating points
there is an infinite continuation in each of the four directions
$x^*\rightarrow y^*$ where $x^*$ is closer to the root than $y^*$.  Based on
the choices of degrees, we have a canonical conditional probability
distribution on the collection ${\cal C}$ as determined by $\hat q$.

Whether we use the function $u$, some other function $w$, or leave it as a
sequence of multiplying by $p$s and dividing by $q$s, we should notice the
connection to entropy.  Assuming that we are in the chain generated by
$x_0\rightarrow y_0$, a member $\omega$ of ${\cal P}$ is a sequence of
$(y_0 , x_0 , y_1 , x_1 , \dots)$ such that
$x_0 \rightarrow y_0 , x_0 \rightarrow y_1, x_1\rightarrow y_1, x_1\rightarrow
y_2, \dots$.  If $\omega$ is such an infinite sequence inside a chain
$C\in {\cal C}$, and weights $p_i$ are given to the various
$x_i\rightarrow y_i$ and weights $q_i$ are given to the various
$x_{i-1} \rightarrow y_{i}$, and assuming a normalised passive pain of $1$ at
$y_0$, the active pain at some $x_N\rightarrow y_N$ in the sequence is
$\frac {\prod_{i=0}^N p_i (\omega)} {\prod_{i=1} ^N q_i (\omega) }$. This will
be combined with a probability distribution on these sequences not dissimilar
to that used to define entropy. 

\begin{definition}
  For any path $\omega \in P$ and $i$ define $\phi_i (\omega)$ as
  $$\phi_i (\omega):=\sum_{k=0} ^i \log ( p_k(\omega)) - \sum _{k=1} ^i \log
  (q_k(\omega)).$$
  We  say that $\omega\in C$ if the path $\omega$ is contained in
  $C$.
\end{definition}
                     
Notice that we want to show that, for almost all $C \in {\cal C}$,
$$\lim\sup_{i\rightarrow \infty} \max _{\omega \in C} \phi_i (\omega)=
\infty.$$
It doesn't really matter if we replace the last $p_i$ with a finite
$u(p_i)$ or $w(p_i)$, as long as there is fixed ratio by which $p_i$ cannot
differ from either $u(p_i)$ or $w(p_i)$. This leads to the following
definition.
                
\begin{definition}
  For any $\omega\in P$, we define $\phi^w_i(\omega)$ as
  $$ \phi^w_i(\omega) := \log (w(p_i (\omega))) + \sum_{k=0} ^{i-1} \log (
  p_k(\omega)) - \sum _{k=1} ^i \log (q_k(\omega)).$$
\end{definition}
                 
Given that the sequences $p_i (\omega) $ and the $q_i(\omega) $ are from the
chain minimisers of some $C\in {\cal C}$, we will show that the ratio of
$\log (w(p_{i+1} ) - \log (q_i) + \log (p_i)$ to $\log (w(p_i)) $ tends to
increase in expectation.  To make sense of this tendency, we need to define a
probability distribution on the ${\cal P}$. We have already a probability
distribution on~${\cal C}$, the set of chains, determined by the probabilities
for the future degrees. But to get a probability distribution on ${\cal P}$, we
need to extend it to a conditional probability defined on each chain. We define
that probability distribution using the function $w$ and the process of
equalisation, as described above.
                           
\subsection{The Function $w$}

 We have to define a function $w:[0,1)\rightarrow [1,\infty)$.  The function
$w(x)$ is not far away from $\frac x {1-x}$, so it is easiest to represent it
as $w(x) = \frac x {1-x} f(x) $ with a function $f: [0,1] \rightarrow [\frac 45
, \frac 85]$ such that $f$ is in $C^2$.

Where does the function $w$ come from?  We consider a non-terminating chain
generated by $x\rightarrow y$. We assume that the passive pain at some point
$y$ is normalised at $1$, the probability coming from~$x$ to~$y$ is $p$, and
all vertices are non-terminating of degree $5$.  This means that, if $p=\frac
15$, the resulting passive pain of the entire chain has the value of $1$ and
the active pain $\frac 15$.  Following from~$x$ to the four other vertices on
the other side from~$y$, we assume that the weights are distributed evenly.
This means that from $x$ to the four points~$z_1, z_2, z_3, z_4$, the weight is
$\frac {1-p} 4$. The passive pain $v$ at each of these four places~$z_i$ has to
satisfy~\mbox{$\frac {1-p} 4 v = p$} or~$v=\frac {4 p} { (1-p)}$. In the
definition of the function $w$, we will ignore the initial multiple of $4$ and
also drop the $4$ from other places where it is superfluous.  We continue with
weights $\frac {3+p} {16}$ in the further directions from each of the $z_i$.
The weights at the next stage are $\frac { 13 -p} { 64}$.  The recursive
calculation, a generating function, for the active pain in the limit becomes
$$  \frac {p } {1-p}  \cdot \frac {4(3 + p)} {13-p} \cdot \frac { 4(51+p) } {
  205- p}  \cdot \frac { 4( 819+p)}  { 3277-p} = 
\frac {p } {1-p} \prod_{i=0} ^{\infty}\frac { 4(\frac {16^{i+1} -1} 5+p)}
{\frac {4\cdot 16^{i+1} +1} 5 -p}.$$  
  
   There is a problem with this definition for the function $w$. The above
infinite product is based on the idea that continuation with all points of
degree $5$ is the proper way to represent what happens in general when the pain
levels converge to finite levels.  However we claim a slight tendency for these
pain levels to approach infinity, meaning that the function we need should be a
slight distortion of the above.  The above function works well for the values
$p$ that are used most commonly in the equalisation process.  As stated above,
all such $p$ are greater than $.55$, however usually they are the $p$ between
$\frac 1 {10}$ and $\frac 1 2$.  The values outside of this range need to be
altered with only minimal change for the values within this range.  For the
function $f$ with $ w(p) = \frac p {1-p} f(p)$ we define
         $$f(p) =  e^{ - 1_{ [ 0, \frac 1 {5} ]}(p) \frac {20} {27}  (\frac 1 {5} -p)^3}   \cdot 
         e^{ - 1_{ [ \frac 1 2 , 1 ]}(p) \frac {1} {4} ( p-\frac 12 )^3 } \cdot
\ \frac {4(3 + p)} {13-p} \cdot \frac { 4(51+p) } { 205- p} \cdot \frac { 4(
819+p)} { 3277-p} \cdots,$$ meaning that the function is reduced for the
extremes of $p\in [ 0, \frac 1 5]$ and $p\in [\frac 12 , 1]$.  The function is
altered so that it remains in $C^2$.

\subsection{ The Equalisation Process}
\label{sec:eqprocess}

Let's look at the chain generated by $x\rightarrow y$, where $x\rightarrow z_i$
for $i=1,2,3,4$ and $x^*$ is another vertex where $x^* \rightarrow z_i\neq y$
for some $i$. Assume that $p$ is the weight given by~$x$ toward $y$ and $j_i+1$
is the degree of $z_i$.  Instead of equalising the
$u(x^* \rightarrow z_i,p^*)\frac 1 {q_i} $ over all the choices, we equalise the
$\frac {w(\frac {1-q_i} {j_i} ) } {q_i}$. This can be performed, as $w(p)$ can
be calculated easily, while the $u(x^* \rightarrow z_i,p^*) $ is known only
through understanding the infinite structure.  In what follows, this is what we
call the equalisation process, not the process described above of equalising
the $u(x^* \rightarrow z_i,p^*) $.

We have to relate the equalisation process to the chain minimisers. This is
accomplished by the following lemmas.

\begin{lem} \label{lem:D}
  (a) For every choice of $k=1, \dots , 9$ and $x$, the second derivative of
  $-\log(1-\frac {1-x} k) +\log ( f(\frac {1-x} k)) $ is negative.
  (b) For every choice of $x$, $\frac 16 \leq \frac {d(\log (f(x)))} {dx}
  <\frac {5} {9} $.
  (c) The function $xw(x)$ is convex in $x$.
\end{lem}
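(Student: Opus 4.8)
The plan is to treat all three items as explicit one--variable estimates and reduce each to a statement about $\log f$ and its first two derivatives, using the closed form
$$\log f(y)=-\frac{20}{27}\,1_{[0,\frac{1}{5}]}(y)\Bigl(\frac{1}{5}-y\Bigr)^{3}-\frac{1}{4}\,1_{[\frac{1}{2},1]}(y)\Bigl(y-\frac{1}{2}\Bigr)^{3}+\sum_{i=0}^{\infty}\Bigl[\log 4+\log(a_i+y)-\log(b_i-y)\Bigr],$$
where $a_i=\frac{16^{i+1}-1}{5}$ and $b_i=4a_i+1=\frac{4\cdot16^{i+1}+1}{5}$. The first thing I would record are the structural facts that make this series harmless: $b_i-a_i=3a_i+1\ge 10$ and $a_i\ge 3\cdot16^{i}$, so the series and all of its termwise derivatives converge uniformly on $[0,1]$ and may be differentiated term by term (consistent with $f\in C^{2}$, since the two cubic corrections were arranged so that they and their first two derivatives vanish at $\frac{1}{5}$ and $\frac{1}{2}$). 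Two sign observations then carry most of the weight. Since $b_i-y>a_i+y$ on $[0,1]$ (because $b_i-a_i\ge 10>2\ge 2y$), we have $\frac{d^{2}}{dy^{2}}\bigl[\log(a_i+y)-\log(b_i-y)\bigr]=\frac{1}{(b_i-y)^{2}}-\frac{1}{(a_i+y)^{2}}<0$; and $\frac{d^{2}}{dy^{2}}\bigl[-c(\frac{1}{5}-y)^{3}\bigr]=-6c(\frac{1}{5}-y)\le 0$ on $[0,\frac{1}{5}]$, with the analogous computation for the second bump. Hence $(\log f)''<0$ on $[0,1)$, i.e.\ $\log f$ is concave; the same estimates give the crude bound $(\log f)''>-2$ everywhere, since the bump parts contribute at least $-\frac{8}{9}$ and the series part at least $-\sum_i a_i^{-2}>-\frac{1}{8}$.

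For (a), the substitution $y=\frac{1-x}{k}$ has $\frac{d^{2}y}{dx^{2}}=0$, so the second derivative in $x$ of the displayed expression is $k^{-2}$ times the second derivative in $y$ of the same expression viewed as a function of $y$; that function is, up to an additive constant, $\log f(y)$ together with the elementary term coming from the $1-\frac{1-x}{k}=\frac{k-1+x}{k}$ factor. Concavity of $\log f$ from the first paragraph does the main work, and the remaining task is to control the elementary term alongside $\log f$ on the range of $\frac{1-x}{k}$. I would do this by splitting that range at $\frac{1}{5}$ and $\frac{1}{2}$, the endpoints of the supports of the cubic corrections (which are in $f$ precisely to close this estimate near the edges of the relevant interval): on each of the three resulting pieces the bump term, the elementary term, and every partial sum of the series are monotone in $y$, so the inequality reduces to finitely many explicit numerical checks, with the tail $\sum_{i\ge 3}$ bounded by a geometric series of ratio $\frac{1}{16}$.

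For (b), differentiate the closed form once,
$$\frac{d}{dx}\log f(x)=1_{[0,\frac{1}{5}]}(x)\,\frac{20}{9}\Bigl(\frac{1}{5}-x\Bigr)^{2}-1_{[\frac{1}{2},1]}(x)\,\frac{3}{4}\Bigl(x-\frac{1}{2}\Bigr)^{2}+\sum_{i=0}^{\infty}\Bigl(\frac{1}{a_i+x}+\frac{1}{b_i-x}\Bigr),$$
and note that each term $\frac{1}{a_i+x}+\frac{1}{b_i-x}$ is decreasing in $x$ (derivative $\frac{1}{(b_i-x)^{2}}-\frac{1}{(a_i+x)^{2}}<0$), as are the two bump terms; hence $\frac{d}{dx}\log f(x)$ is decreasing on $[0,1)$, maximal at $x=0$ and minimal as $x\to1$. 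Both bounds then come from summing the first few terms exactly and absorbing $\sum_{i\ge 3}$ into a geometric tail: at $x=0$ the series is $\frac{1}{3}+\frac{1}{13}+\frac{1}{51}+\frac{1}{205}+\cdots\approx 0.436$ and the $[0,\frac{1}{5}]$--bump adds $\frac{20}{9}\cdot\frac{1}{25}=\frac{4}{45}\approx 0.089$, giving about $0.53<\frac{5}{9}$; as $x\to1$ the series is $\frac{1}{4}+\frac{1}{12}+\frac{1}{52}+\frac{1}{204}+\cdots\approx 0.36$ and the $[\frac{1}{2},1]$--bump subtracts $\frac{3}{4}\cdot\frac{1}{4}=\frac{3}{16}\approx 0.188$, giving about $0.17>\frac{1}{6}$. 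The lower bound is the sharp one --- its margin is only about $0.005$ --- so this is where the argument is most delicate, and it is precisely this family of inequalities that the program of Section~4 verifies rigorously.

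For (c), pass to logarithms: with $g(x)=xw(x)$ we have $\log g(x)=2\log x-\log(1-x)+\log f(x)$, hence $g''=g\bigl[((\log g)')^{2}+(\log g)''\bigr]$, and since $g>0$ on $(0,1)$ it suffices to prove $((\log g)')^{2}+(\log g)''>0$ there. Expanding,
$$((\log g)')^{2}+(\log g)''=\frac{2}{x^{2}}+\frac{2}{(1-x)^{2}}+\bigl((\log f)'\bigr)^{2}+\frac{4}{x(1-x)}+\Bigl(\frac{4}{x}+\frac{2}{1-x}\Bigr)(\log f)'+(\log f)''.$$
By (b), $(\log f)'\ge\frac{1}{6}>0$, so every term on the right is non--negative except possibly $(\log f)''$; and $\frac{2}{x^{2}}+(\log f)''>0$ on $(0,1)$ because there $\frac{2}{x^{2}}>2$ while $(\log f)''>-2$ from the first paragraph. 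Hence the whole expression is strictly positive, $g''>0$, and $xw(x)$ is convex. The one genuine obstacle in all of this is the sharp lower bound in (b); once that is in hand, (a) is the same kind of finite subinterval estimate and (c) is a short algebraic consequence.
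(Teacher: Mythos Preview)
Your treatment of (b) coincides with the paper's: $(\log f)'$ is strictly decreasing, so the bounds are its values at $x=0$ and $x\to1$, and those are exactly the numerical sums you and the paper write down. (One small correction: the program of Section~4 certifies Lemmas~5 and~7, not these endpoint sums, so that aside should be dropped.)

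For (c) your argument is correct but different from the paper's in two respects. First, the method: you exploit the identity $g''=g\bigl[((\log g)')^{2}+(\log g)''\bigr]$ together with the crude bound $(\log f)''>-2$, which is short and clean, whereas the paper expands the second derivative term by term. Second, the target: the paper's proof actually establishes that $x\log w(x)$ is convex --- the ``$xw(x)$'' in the statement is a typo --- and it is the $x\log w(x)$ version that the subsequent corollary uses. So your argument proves what is literally written but not what is needed downstream.

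The genuine gap is in (a). The word ``negative'' in the statement is a misprint: what the paper proves, and what is used immediately afterwards to conclude that $g_k$ is decreasing, is that the second derivative of $H(x)=-\log\bigl(1-\tfrac{1-x}{k}\bigr)+\log f\bigl(\tfrac{1-x}{k}\bigr)$ is \emph{non-negative} (indeed $g_k=-H'$, so $g_k$ decreasing is $H''\ge0$). You take the statement at face value and set up an argument in which ``concavity of $\log f$ does the main work''; but in the true inequality the concave piece is the obstacle, not the engine. In the variable $y=\tfrac{1-x}{k}$ the claim is
\[
\frac{1}{(1-y)^{2}}+(\log f)''(y)\ \ge\ 0,
\]
and at $y=0$ this is \emph{exactly tight}: the cubic correction contributes $-\tfrac{8}{9}$ and the $i=0$ series term contributes $-\tfrac{1}{(3+y)^{2}}=-\tfrac{1}{9}$, summing to $-1$ against $\tfrac{1}{(1-0)^{2}}=1$. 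Hence your deferred ``finitely many explicit numerical checks'' would fail in the stated direction and, in the correct direction, cannot be handled by treating the $-\log(1-y)$ term as a perturbation. The paper's device is the right one: group $-\log(1-y)$ with the bump terms and with $\log(a_0+y)$, verify this first group has non-negative second derivative (equality only at $y=0$), and then pair the remaining series terms as $-\log(b_{i-1}-y)+\log(a_i+y)$, each of which has second derivative $\tfrac{1}{(b_{i-1}-y)^{2}}-\tfrac{1}{(a_i+y)^{2}}>0$ since $b_{i-1}<a_i$.
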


\begin{proof} (a) We group the terms by $\log(\frac { k-1+x } k ) - 1_{ [ 1-
\frac k {5} , 1 ]} \frac { 20} {27} (\frac 1 {5} -\frac {1-x} k)^3 - 1_ {[ 0,
\frac k {2} ]} \frac 14 (\frac {1-x} k - \frac 12) ^3 + \log ( 4(3 + \frac
{1-x} k) )$ and then the rest, followed by the pair $- \log {13-\frac {1-x} k}
+ \log ( 4(51+\frac {1-x} k))$ and so on.  Taking the first derivative of the
first three gives
  $$ -\frac {1} {k-1+x}  -\frac 1 k 1_ { [ 1- \frac k {5} , 1 ]}(x) \frac {20} 9 (\frac 1 {5} - \frac {1-x} k)^2 
+ \frac 1 k 1_ {[ 0, \frac k {2} ]} \frac 34 (\frac {1-x} k - \frac 12) ^2 -
\frac 1k\ \frac 1 {3 + \frac {1-x} k}.$$ Taking the second derivative gives
  $$ \frac {1} {(k-1+x)^2}  -\frac 1 {k^2}  1_ { [ 1- \frac k {5} , 1 ]}(x) \frac {40} 9 (\frac k {5} - \frac {1-x} k) - \frac 1 {k^2} 1_ {[ 0, \frac k {2} ]} \frac 32 (\frac {1-x} k - \frac 12) 
  - \frac 1{k^2} \ \frac 1 {(3 + \frac {1-x} k)^2}. $$ Where the two special
restrictions apply are mutually exclusive. In the region $x\not\in [ 0, \frac k
{2} ]$ the minimum occurs at $x=1$, for the quantity $\frac 1 {k^2} (1-\frac 8
9 - \frac 1 9) =0$. Where $x\in [ 0, \frac k {2} ]$ the result is at least
$\frac 1 {k^2} ( 1 -\frac 34 - \frac 1 9) >0$. Moving to the first pair of
following terms, taking the first derivative gives $\frac 1 { 13-x}+ \frac 1
{x+52}$. The second derivative is $\frac 1 {(13-x)^2} - \frac 1 { (51+x)^2}$,
which is also positive. And the same follows for the rest of the pairs.

  (b) The second derivative of $\log (f(x))$ is negative throughout and hence
the derivative reaches its maximum at $x=0$ for the quantity $\frac 4 {45}+
\frac 1 3+ \frac 1 {13} + \frac 1 {51} + \dots < \frac {5} {9} $ and its
minimum at $1$ for $-\frac 3 {16} + \frac 14 + \frac 1 {12} + \frac 1 {52} +
\dots > \frac 16$.

(c) We need the second derivate of $ x\log (w(x)) = x \log (x) - x \log (1-x) -
1_{ [ 0, \frac 1 {5} ]} (x) \frac{20}{27} (\frac 1 {5} -x)^3 x - 1_{ [ \frac
12, 1 ]} (x) \frac{1}{4} (x-\frac 1 2)^3 x+ x \log (4(3 + x)) - x\log (13-x) +
x\log (51+x) - x\log (205-x) + \dots $.  The first derivative is $$ - 1_{ [ 0,
\frac 1 {5} ]} (x) \frac { 20 } {27} (\frac 1 {5} -x)^3 + 1_{ [ 0, \frac 1 {5}
]} (x) \frac {20} 9 (\frac 1 {5} -x)^2 x - 1_{ [ \frac 12, 1 ]} (x) \frac{1}{4}
(x-\frac 1 2)^31_{ [ \frac 12, 1 ]} (x) \frac{3}{4} (x-\frac 1 2)^2$$
            $$  \log (x) -  \log (1-x)    +  \log  (4(3 + x)) - \log (13-x) + \log (51+x)  - \log (205-x) \dots  + $$
          $$   1+ \frac x {1-x} + \frac x { 3+x} + \frac x{13-x} + \frac x {51+x}  +
             \frac x {205-x} + \dots .$$ The second derivative is $$ 1_{ [ 0,
\frac 1 {5} ]} (x) \frac {40} 9 (\frac 1 {5} -x)^2 - 1_{ [ 0, \frac 1 {5} ]}
(x) \frac {40} 9 (\frac 1 {5} -x)x- 1_{[ \frac 12 , 1]} \frac 32 (x - \frac 12
)^2 - 1_{[ \frac 12 , 1]} \frac 32 (x - \frac 12 ) x$$
          $$ 
           +\frac 1 x + \frac 1 {1-x} +\frac 1 {3+x} + \frac 1 {13-x} + \frac 1
{51+x} + \frac 1 {205-x} + \dots $$
             $$  \frac 1 {(1-x)^2 } + \frac 3 {(3+x)^2} +    \frac {13}  {(13-x)^2 }  +  \frac {51}  {(51+x)^2}  +\frac {205}  {(205-x)^2} +  \dots .$$  
                 The negative terms are dominated by $\frac 1 x$ when $x\leq
\frac 1 {5}$ and by $\frac 1 {(1-x)^2}$ when $x\geq \frac 12$. 
\end{proof}

\begin{definition} 
  Define $ g_k (x)$ to be
  $ \frac 1 {x+ k -1} + \frac 1 k \frac { f' (\frac {1-x}k ) } { f (\frac
    {1-x}k ) }$, which is the negative of the first derivative of
  $-\log(\frac {1-x} k ) +\log ( f(\frac {1-x} k)) $. From Lemma~\ref{lem:D} we
  know that $ g_k (x)$ in decreasing in $x$.
\end{definition}
             
\subsection{The Stochastic Process}
\label{sec:stochprocess}

\noi To define a stochastic process on ${\cal P}$, we need a probability
distribution on paths. Before we do that, to use the computer analysis
effectively we need to perform a reduction of the system to discretely many
values.  We alter slightly the chain minimiser of every chain.  We start by
rounding down the initial weight $p_0$, the weight from $x_0$ to $y_0$, to the
highest value of $\frac k {20,000}$ less than or equal to it (for~$k=0$ or $k$
a positive integer).  From $x_0$ to the four different potential $y_1$ we can
increase the weights, so that they add up exactly to $1-p_0$. For the weights
from various~$x_1$ to some $y_1$, with the weight from $x_0$ to $y_1$, they add
up to at least $1+ 2^{-11}$.  We round down these various numbers $p_1$ to the
highest value of $\frac k {20,000}$ less than or equal to~$p_1$ for $k$ a
non-negative integer. Also to simplify the analysis, those rounded down
numbers, which always add up to at least $1$, are normalised to add up to~$1$.
They must add up to at least $1$ because by rounding down one can reduce the
quantity by no more than $\frac 1 {20,000}$, there are at most $9$ such
numbers, and $\frac 9 { 20,000}$ is less than $2^{-11}$.  We require only that
this rounding down process is done in a Borel measurable way. We can continue
this process to all stages, always reducing the value of the products.  After
this alteration, if we show that the limit superior of of the maximum of
$\phi^w_i$ or $\phi_i$ within a chain is infinite almost everywhere, the same
is true before this alteration.  Indeed, we show that any colouring function
obeying the rule ${\bf Q}$ can be altered in this way so that the limit
superior is infinite almost everywhere.
             
             The following can occur in that analysis: there is a $p_i$ with
$p_i= \frac { 19,999} {20,000}$ followed by some passive point $z$ of degree
$2$, meaning that there is only one $p_{i+1}$ following in that direction. As
less than $\frac 1 {20,000}$ can distributed to $z$, it follows that this
$p_{i+1}$ must be more than $\frac { 19,999} {20,000}$. As there are no other
such $p_{i+1}$, it will be rounded down to exactly $\frac { 19,999}
{20,000}$. The sum of $\frac { 19,999} {20,000}$ with the weight to $z$ will be
strictly less than~$1$.  Notice that this situation cannot occur, because with
less than $\frac 1 {20,000}$ sent to $z$ in one direction and at least $1$ in
the other direction there is significantly less than~$1+ \frac 1 {2^{11}}$
weight toward $z$. The result would be that $p_{i+1}$ should be exactly $1$,
all the next $q_{i+1}$ should be zero, and the process jumps immediately to an
infinite value. But we will not exclude this possibility from the
analysis. Instead of letting the process jump immediately to infinity we will
give it some very high but finite value. This is because the following analysis
involves a Markov chain with bounds for the variance on each stage. In the
effort to show that a process should be infinite almost everywhere, we ignore
such situations where it may go to infinity in one step. To include this in the
analysis would greatly complicate it.
                       
Next there are some facts about any $q_1, q_2, q_3, q_4$ chosen to satisfy
the equalisation process: namely, that
\[\frac {w( \frac {1-q_i} {j_i} )} {q_i} = \frac
  {w( \frac {1-q_j} {j_j} )} {q_j},\]
for all $i$ and $p + q_1+ q_2+ q_3 + q_4=1$.
The obvious fact is that $j_i = j_k$ implies that $r_i = r_k$ and $j_i < j_k$
implies $r_i > r_k$. The not so obvious, based on the many calculations, is the
following lemma.
\begin{lem} \label{lem:J}
  The following holds for all $p=\frac n {20,000}$, where
  $n$ is a positive integer less than 20,000: \newline
  (a) if $j_i > j_k$ for some $i,k$, then $q_i < \frac 1 3$; \newline
  (b) if $j_i = j_k+1$, then $|q_i - q_k| \leq \frac 14$; \newline
  (c) if $j_i >j_k>1$, then $q_l + q_m +p > \frac 2 9 $ for $\{ l,m\} \cap \{
  i,k\} = \emptyset$. 
\end{lem}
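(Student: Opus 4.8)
The plan is to turn the three assertions into a certified finite computation, using only the monotonicity and convexity of $w$ recorded in Lemma~\ref{lem:D}.

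\emph{Well-posedness and monotonicity.} For $j\in\{1,\dots,9\}$ set $h_j(q):=w\!\left(\frac{1-q}{j}\right)/q$ on $q\in(0,1)$. Since $f>0$ and, by Lemma~\ref{lem:D}(b), $(\log f)'\ge\frac16$, the function $w(y)=\frac{y}{1-y}f(y)$ is continuous and strictly increasing on $[0,1)$, with $w(0)=0$ and $w(y)\to\infty$ as $y\to1^-$; hence each $h_j$ is continuous and strictly decreasing from $+\infty$ to $0$. Given a degree profile $(j_1,j_2,j_3,j_4)$, for any prospective common value $c>0$ the weights $q_i:=h_{j_i}^{-1}(c)$ are uniquely determined and strictly decreasing in $c$, so $\sum_i q_i$ is strictly decreasing in $c$, and there is a unique $c$ with $\sum_i q_i=1-p$. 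Two consequences are immediate: the ``obvious facts'' (the map $j\mapsto h_j(q)$ is pointwise strictly decreasing because $w$ is increasing, so $j_i<j_k\Rightarrow q_i>q_k$ and $j_i=j_k\Rightarrow q_i=q_k$); and, since $1-p$ decreases in $p$ while $\sum_i q_i$ decreases in $c$, the value $c$ increases in $p$ and every $q_i$ decreases in $p$. In particular $q_i+q_k$ is maximal, hence $q_l+q_m+p=1-q_i-q_k$ minimal, at the smallest admissible weight $p=\frac1{20000}$, so parts (a) and (c) already reduce to that single value of $p$.

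\emph{A geometric bound behind (b).} For fixed $c$, $q_j=h_j^{-1}(c)$ is the abscissa of the unique crossing of the ray $y=cq$ with the decreasing curve $y=w\!\left(\frac{1-q}{j}\right)$. If $j_i=j_k+1$ the curve for $j_i$ lies strictly below that for $j_k$, so $q_i<q_k$; intersecting both curves with the horizontal line at height $cq_i$ and using injectivity of $w$ gives $q_k-q_i<\frac{1-q_i}{j_i}\le\frac1{j_i}$. Hence (b) holds automatically once $j_i\ge4$, and only the profiles with $(j_k,j_i)\in\{(1,2),(2,3)\}$ (and there only when $q_i$ is small) still need attention. By contrast, the corresponding soft estimates for (a) and (c) — $q_i<q_k$ together with the budget identity $p+\sum_i q_i=1$ — are not sharp enough to reach the stated constants $\frac13$ and $\frac29$, so those parts must be checked outright.

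\emph{The finite verification.} What remains is a finite check: finitely many profiles $(j_1,\dots,j_4)\in\{1,\dots,9\}^4$, and for each the admissible weights $p=\frac n{20000}$ — a single value $p=\frac1{20000}$ for (a) and (c), and $1\le n\le19999$ for (b). For each datum one locates $c$ by bisection, whose convergence is guaranteed by the strict monotonicity above, and reads off $q_1,\dots,q_4$. The needed values $w(y)$ are computed by truncating the convergent infinite product defining $f$, whose $i$-th factor differs from $1$ by $O(16^{-i})$, so the truncation tail is rigorously bounded; the $C^2$ bounds of Lemma~\ref{lem:D} then let one carry these error bars through to $c$ and to the $q_i$. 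This is precisely the computation performed by the program of Section~4, and it certifies $q_i<\frac13$ in (a), $|q_i-q_k|\le\frac14$ in (b), and $q_l+q_m+p>\frac29$ in (c).

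\emph{Main obstacle.} The real difficulty is that the constants $\frac13$ and $\frac29$ are close to the true extrema of the quantities they bound, so no purely soft argument survives the last step: the heart of the matter is the honest — if, given Lemma~\ref{lem:D}, routine — interval computation, and its reliability is exactly the reliability of the error analysis in Section~4.
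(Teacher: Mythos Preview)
Your proposal is correct and, at its core, is the same as the paper's: Lemma~\ref{lem:J} is established by a finite computer check using the bisection solver of Section~\ref{sec:numerical}. Your write-up adds two valid analytic reductions that the paper does not make---the monotonicity of each $q_i$ in $p$ (so (a) and (c) need only $p=\tfrac{1}{20000}$) and the inequality $q_k-q_i<(1-q_i)/j_i$ (so (b) is automatic once $j_i\ge 4$)---together with a more explicit discussion of truncation and interval error; the paper simply loops over all $19{,}999$ values of $p$ and all ordered profiles without pruning. One small inaccuracy: the program of Section~\ref{sec:numerical} does \emph{not} restrict to $p=\tfrac{1}{20000}$ or to small $j_i$, so ``precisely the computation performed'' overstates the match; the program of course covers the cases your reductions leave, but it is a superset of what you actually need.
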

      
The proof of Lemma~\ref{lem:J} is confirmed by the computer. The discussion of
the nymerical calculations is deferred to Section~\ref{sec:numerical}.
     
\begin{lem} \label{lem:B}
Let $0< p < 1$ and let $q=(q_1, \dots, q_4)$ be a solution to the equalities
$$\frac {w( \frac {1-q_i} {j_i} )} {q_i} = \frac {w( \frac {1-q_j} {j_j} )} {q_j},$$
for all $i$ and $p + q_1+ q_2+ q_3 + q_4=1$ with $p = \frac k {20,000}$ for
some integer $k$ with $0 < k < 20,000$ and $j_1, \dots , j_k$ positive integers
between $1$ and $9$ inclusive. Then, there exists some $s\in \Delta (\{ 1,
\dots , k\})$ such that for all $q^* = (q^*_1, \dots, q^*_k)$ with all $q^*_i$
positive such that $p+ q^*_1+ \dots + q^*_k=1$ it follows that
$$\phi _s (q^*) := \sum_{i}   s_i     \   \log \big(  \frac {w (\frac
  {1-q_i^*} {j_i} ) } { q^*_i} \big)  \geq
\phi_s (q) = \sum_{i}  s_i\  \log \big(   \frac {w ( \frac { 1-q_i } {j_i}
  )} { q_i} \big) .$$
Furthermore for every $\epsilon$ there are finitely many values
$q^*_1. \dots , q^*_n$ for $q^*$ such that after rounding up to the nearest
$q^*_k$ the inequality holds up to $\epsilon$.
\end{lem}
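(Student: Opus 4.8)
The plan is to realise $q$ as a KKT (Lagrange) point of $\phi_s$ for a carefully chosen $s$, and then to promote this to a genuine global minimum over the feasible slice. First I would reparametrise: for $t\in(0,1)$ put $G_i(t):=\log\!\big(w(\tfrac{1-t}{j_i})/t\big)$, so that, using $w(u)=\tfrac{u}{1-u}f(u)$, one has $G_i(t)=\log(1-t)-\log(j_i-1+t)+\log f(\tfrac{1-t}{j_i})-\log t$. Since $(\log f)'\ge\tfrac16>0$ by Lemma~\ref{lem:D}(b), every summand of $G_i'$ is strictly negative, so each $G_i$ is a strictly decreasing smooth bijection $(0,1)\to\R$ with $G_i(0^{+})=+\infty$ and $G_i(1^{-})=-\infty$. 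The equalisation system says exactly that $G_i(q_i)$ equals a common value $\log V$ for every $i$; since $V\mapsto\sum_i G_i^{-1}(\log V)$ is continuous, strictly decreasing and onto $(0,n)$ ($n$ the number of directions), the constraint $\sum_i q_i=1-p\in(0,1)$ pins $V$, hence $q$, uniquely, and automatically forces each $q_i<1-p<1$.

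For the KKT step, take $s_i:=\big(1/|G_i'(q_i)|\big)\big/\sum_j\big(1/|G_j'(q_j)|\big)$. This is a probability vector on the directions because all $G_i'(q_i)<0$, and it makes $s_iG_i'(q_i)$ the same negative constant for every $i$; hence $\nabla\phi_s(q)=\big(s_iG_i'(q_i)\big)_i$ is a multiple of $(1,\dots,1)$, orthogonal to the tangent space $\{\delta:\sum_i\delta_i=0\}$ of $\mathcal F:=\{q^{*}:q^{*}_i>0,\ \sum_i q^{*}_i=1-p\}$, so $q$ is a stationary point of $\phi_s$ on $\mathcal F$. I would also record the boundary behaviour: $\phi_s(q^{*})=\sum_i s_iG_i(q^{*}_i)\to+\infty$ as $q^{*}$ approaches the relative boundary of $\mathcal F$ (some $q^{*}_i\to0$ makes $-s_i\log q^{*}_i\to\infty$ while every other term stays bounded, each $\tfrac{1-q^{*}_j}{j_j}$ remaining in a compact subinterval of $[0,1)$ on which $\log w$ is bounded, using $q^{*}_j<1-p<1$). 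Consequently $\phi_s$ attains its minimum over $\mathcal F$ at an interior stationary point.

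To finish I would isolate a sub-region $\mathcal F'\subseteq\mathcal F$ that necessarily contains the minimiser of $\phi_s$ over $\mathcal F$ and on which $\phi_s$ is convex; then that minimiser is the unique stationary point of $\phi_s$ on $\mathcal F'$, namely $q$, whence $\phi_s(q^{*})\ge\phi_s(q)$ for every feasible $q^{*}$. The region should be $\mathcal F'=\{q^{*}\in\mathcal F:\ \tfrac{1-q^{*}_i}{j_i}\ge\theta\text{ for all }i\}$ for a small constant $\theta$ (around the computer-verified value $0.055$): by the redistribution argument of Section~\ref{sec:eqprocess}, any feasible $q^{*}$ with some continuation weight $\tfrac{1-q^{*}_i}{j_i}<\theta$ strictly decreases $\phi_s$ under shifting mass off $q^{*}_i$, so no such point can be a minimiser, while Lemma~\ref{lem:J}(a)--(c) confirms that the equaliser $q$ itself lies in the relative interior of $\mathcal F'$. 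For the convexity on $\mathcal F'$, along a segment $t\mapsto q^{*}+t\delta$ with $\sum_i\delta_i=0$ one has $\phi_s(q^{*}+t\delta)=\sum_i s_iG_i(q^{*}_i+t\delta_i)$, so it comes down to the sign of $\sum_i s_iG_i''(q^{*}_i)\delta_i^{2}$; here Lemma~\ref{lem:D} supplies the pieces ($-\log t$ is convex, the combination $-\log(j_i-1+t)+\log f(\tfrac{1-t}{j_i})$ is controlled by part (a), and the convexity of $x\mapsto xw(x)$ from part (c) feeds the same estimates), and the weights $s_i\propto1/|G_i'(q_i)|$ damp precisely the coordinates whose $q_i$ sits where $G_i$ fails to be convex on its own. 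I expect this last convexity estimate to be the main obstacle, and the one place where both the discretisation hypothesis $p=\tfrac{k}{20{,}000}$ (in particular $p>0$, which keeps every $q^{*}_i$ bounded away from $1$) and the fine structure of the equaliser recorded in Lemma~\ref{lem:J} are genuinely used: an individual $G_i$ turns concave once $t$ is large, i.e.\ once the continuation weight $\tfrac{1-t}{j_i}$ is tiny, which is exactly the behaviour that the cutoff $\theta$ and the damping by $s$ are designed to neutralise.

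For the concluding clause, fix $\epsilon>0$. Remove from $\mathcal F$ the relatively open corner neighbourhood on which $\phi_s\ge\phi_s(q)+1$ (available from the boundary blow-up above); the complement $\mathcal F_\epsilon$ is compact and $\phi_s$ is uniformly continuous on it, so there is $\eta>0$ with $|\phi_s(a)-\phi_s(b)|<\epsilon$ whenever $\|a-b\|_\infty<\eta$. Let $q^{*(1)},\dots,q^{*(N)}$ be the finitely many feasible weight vectors all of whose coordinates are integer multiples of a fixed $\rho<\eta$ (renormalised to sum to $1-p$). For any feasible $q^{*}$, rounding to the nearest $q^{*(j)}$ shifts each coordinate by less than $\eta$, hence $\phi_s$ by less than $\epsilon$, so $\phi_s(q^{*(j)})>\phi_s(q^{*})-\epsilon\ge\phi_s(q)-\epsilon$ when $q^{*}\in\mathcal F_\epsilon$, while for $q^{*}$ in the discarded corner $\phi_s(q^{*})\ge\phi_s(q)$ already holds. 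This furnishes the required finite list of rounded values for which the inequality of the lemma persists up to $\epsilon$.
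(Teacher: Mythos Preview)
Your first two moves coincide with the paper's: the choice $s_i\propto 1/|G_i'(q_i)|$ makes $q$ a constrained stationary point of $\phi_s$, and $\phi_s\to+\infty$ at the relative boundary of the slice $\{q^*_i>0,\ \sum q^*_i=1-p\}$, so the infimum over the slice is attained at an interior critical point. Your treatment of the discretisation clause at the end is also adequate and close in spirit to the paper's.

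The gap is the promotion from ``stationary'' to ``global minimum''. You propose to find a subregion $\mathcal F'$ on which $\phi_s$ is convex, but you do not establish this, and the ingredients you cite do not deliver it. The weights $s_i$ are fixed by $q$, not by $q^*$, so the ``damping'' cannot neutralise a direction $e_i-e_j$ at a point $q^*$ where $q^*_i$ is large (so $G_i''(q^*_i)<0$) while $q_i$ was small (so $s_i$ is not small); your region $\mathcal F'=\{(1-q^*_i)/j_i\ge\theta\}$ with $\theta\approx0.055$ still allows $q^*_i$ up to $0.945$ when $j_i=1$. The redistribution paragraph of Section~\ref{sec:eqprocess} that you invoke concerns the max--type chain-minimiser objective, not the weighted log-sum $\phi_s$, so it does not confine the minimiser of $\phi_s$ to $\mathcal F'$; and Lemma~\ref{lem:D}(c) is about $x\mapsto xw(x)$, which does not translate into convexity of $G_i$ or of $\phi_s$ on $\mathcal F'$.

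The paper does not attempt convexity. It shows directly that $q$ is the \emph{only} interior critical point. From the shape of $(-G_i)''$ it first extracts a structural restriction: any second critical point $r\neq q$ (with multiplier $\lambda'\ge\lambda$) must have exactly one coordinate with $r_i>q_i$, $r_i\ge\tfrac12$, $r_i+q_i>1$, and $r_k<q_k$ for all other $k$. It then eliminates this configuration by a case analysis on the pair of degrees $(j_i,j_k)$ --- the cases $j_i<j_k$, $j_i=j_k$, and $j_i>j_k$ with subcases for $j_k=1$ and $j_k\ge2$ --- each step closing via an elementary inequality after feeding in the computer-verified bounds of Lemma~\ref{lem:J} (namely $q_i<\tfrac13$ when $j_i>j_k$, $|q_i-q_k|\le\tfrac14$ when $j_i=j_k+1$, and $p+q_l+q_m>\tfrac29$). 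That is where the hypothesis $p=k/20{,}000$ and Lemma~\ref{lem:J} are actually consumed. Your sketch bypasses this mechanism entirely, and the convexity shortcut you gesture at does not appear to close without it.
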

   
\begin{proof}
  With Lagrangian multipliers
  $L :=\phi_{ s} (q^*) -\lambda (1-p -\sum_i q^*_i)$, we define $s$ and create
  a critical point for $\phi_s$ at $q^* =q$ by setting
$$\frac 1 {t_i} := -\frac {\partial \log \big( \frac {w(\frac {1-q^*_i} j)}
  {q^*_i }\big)} {\partial q_i^* } (q_i) $$ and defining
$s_i := \frac {t_i} {\sum_j t_j}$. From Lemma~\ref {lem:D}, all the $t_i$ are
positive.  As convergence to the boundary of the domain of $q^*$ (where
$q^*_i =0$ for some $i$) gives convergence of $\phi_{ s}$ to positive infinity,
it suffices to show that there can be only one unique critical point of
$\phi_{ s}$ so defined, namely the $q= (q_1, \dots , q_k)$.  Another critical
point $r= (r_1, \dots , r_k)\not= q$ would have to satisfy the equalities for
some other $\lambda '$.
         
According to Lemma~\ref{lem:D}, the second derivative of
$-\log ( \frac {w(\frac {1-x} j)} x)$ is equal to
$\frac 1 {x^2} - \frac 1 {(1-x)^2}$ plus a positive term, meaning that the
second derivative of $\frac 12 +x$ plus the second derivative of $\frac 12 -x$
is always positive.  As we could switch $q$ and $r$, without loss of
generality, assume that $\lambda' \geq \lambda$.  As the $q_i$ and the $r_i$
sum up to the same quantity~$1-p$, a second critical point is possible only if
there is some $r_i \geq \frac 12$ with~$q_i \leq r_i$ with~$q_i + r_i >1$ and
$r_k \leq q_k$ for all other $k\not=i$.  We can now assume that the difference
between $q$ and $r$ implies that $\lambda' > \lambda$, $r_i > q_i$ and
$r_j< q_j$ for all~$j\not= i$, and of course that $p< \frac 12$.

A second critical point implies that
$$\frac { \frac 1{r_i} + \frac 1 {1-r_i} + \frac 1 {j_i-1+r_i} + \frac 1 j
  \frac {f'(\frac {1-r_i} {j_i}) } {f(\frac {1-r_i} {j_i})} } { \frac 1{q_i }+
  \frac 1 {1-q_i} + \frac 1 {j_i-1+q_i} +\frac 1 {j_i} \frac {f'(\frac {1-q_i}
    {j_i}) } {f(\frac {1-q_i} {j_i})}} $$ is equal to
$$\frac { \frac 1{r_k} + \frac 1 {1-r_k} + \frac 1 {j_k-1+r_k} + \frac 1 {j_k}
  \frac {f'(\frac {1-r_k} {j_k}) } {f(\frac {1-r_k} {j_k})} }{ \frac 1{q_k}-
  \frac 1 {1-q_k} - \frac 1 {j_k-1+q_k} +\frac 1 {j_k} \frac {f'(\frac {1-q_k}
    {j_k}) } {f(\frac {1-q_k} {j_k})}}, $$ for all $i\not= k$ and all of these
ratios are equal to $\frac {\lambda '} {\lambda}$.  We show that a second
critical point is not possible by showing that this is not possible.

We start with the assumption that the $\{ x_l, y_l\} $ are equal to the
$\{ q_l, r_l\}$ for all~$l$ with $y_i > x_i$ for only one $i$ and choose any
$k\not= i $ such that $\frac {(1-y_i) x_k }{(1-x_i) y_k}$ is maximal. Without
loss of generality let $i=0$ and $k=1 $ (with $x_2, x_3$ and $y_2, y_3$ the
other variables).  With their sums equal to $1-p$ and $y_0 > x_0$ it follows
that $\frac {(1-y_i) x_k }{(1-x_i) y_k} >1$, meaning that
$x_1 (1-y_0 ) > y_1 ( 1-x_0 ) $.

We will take two approaches to proving the above equalities are
impossible. Either we will show directly that the equality is not possible or
we will demonstrate that the equality implies that
                  $$\frac { \frac 1 {y_1} + \frac 1{1-y_1} } { \frac 1 {y_0} + \frac 1{1-y_0} }
                 \quad \leq \quad \frac { \frac 1 {x_1} + \frac 1{1-x_1}} {
\frac 1 {x_0} + \frac 1{1-x_0}}. $$ With $ \frac {y_0 (1-y_0) } {y_1
(1-x_1)}>1$, we get the two inequalities:
$$x_1 (1-y_0 ) > y_1 ( 1-x_0 ) \quad  x_0 y_1 (1-y_1) (1-x_0) \geq y_1 x_1
(1-x_1) (1-y_0).$$
Multiplying together gives $x_0 (1-y_1) > y_0 (1-x_1)$ and adding
$y_1 (1-y_0) > x_1 (1-x_0) $ to this inequality we get $x_0 + x_1 > y_0 +
y_1$. This implies that there must be some~$n= 2$ or $n=3$ with $y_n > x_n$, a
contradiction to $y_j < x_j$ for all $j\not=0$.

Let $k = j_0$ and $l = j_1$. We use that $g_j (x)$ is decreasing in $x$ for any
choice of $j$. 
                     
{\bf Case 1, $k < l$:} Notice that the situation where
$y_i=q_i$ is included in this case, since $y_0> \frac 12$ implies that $k <l$.
If $k+1=l$ we get $|x_0 - x_1| \leq \frac 14$ in both cases of $x_0 = q_0$ or
$x_0 = r_0$ from Lemma~\ref{lem:J} , since $x_0 > x_1$, $y_0 > x_0$ and $y_1 <
x_1$.  So regardless of the values of $k$ and $l$, we conclude that $b:= g_k
(x_0) > d:= g_l (x_1) $. Likewise we define $a:= g_k (y_0)$ and $c:= g_l (y_1)
$ with the the result $b>a$ and $c>d$.  To demonstrate that the above
inequality implies the impossibility of a second critical point, and using $bc
> ad$, it is sufficient to show that $$d(\frac 1 {y_0} + \frac 1 {1-y_0} )+ a
(\frac 1 {x_1} + \frac 1 {1-x_1} ) \leq c (\frac 1 {x_0} + \frac 1 {1-x_0} ) +
b (\frac 1{y_1} + \frac 1 {1-y_1}).$$ By the inequalities $b>a$ and $c> d$ this
is implied by $$d(\frac 1 {y_0} + \frac 1 {1-y_0} )+ b (\frac 1 {x_1} + \frac 1
{1-x_1} ) \leq d (\frac 1 {x_0} + \frac 1 {1-x_0} ) + b (\frac 1{y_1} + \frac 1
{1-y_1}).$$ Now using $b>d$ and that $y_1 < x_1< \frac 12 $ implies $\frac
1{y_1} + \frac 1 {1-y_1} > \frac 1 {x_1} + \frac 1 {1-x_1}$ and likewise $y_0+ 
x_0 > 1$, $y_0 > x_0$ and $y_0 > \frac 12$ implies $\frac 1 {y_0} + \frac 1
{1-y_0} > \frac 1 {x_0} + \frac 1 {1-x_0}$, it suffices to prove that $$\frac 1
{y_0} + \frac 1 {1-y_0} + \frac 1 {x_1} + \frac 1 {1-x_1} \leq \frac 1 {x_0} +
\frac 1 {1-x_0} + \frac 1{y_1} + \frac 1 {1-y_1}.$$

We separate into two parts, to show that
$ \frac 1 {1-x_0} + \frac 1 {y_1} \geq \frac 1 {1-y_0} + \frac 1 {x_1}$ and
$\frac 1 {x_0} + \frac 1 {1-y_1} \geq \frac 1 {y_0} + \frac 1{1-x_1} $. To deal
with the first part, after clearing the dominators one gets equivalence to
$(1-y_1) (1-x_1) (y_0 - x_0) \geq x_0 y_0 (x_1-y_1)$. This inequality follows
from $y_0 + y_1 <1$, $x_0 + x_1 < 1$, and $y_0 + y_1 > x_0 + x_1$. The other
part reduces to the same inequality, after clearing the dominators.

{\bf Case 2, $k=l$:} This is broken down into two cases: {\bf
Case 2A, $x_0 \geq x_1$} and {\bf Case 2B, $x_0> x_1$}. In the former case, we
have something of the form $\frac A B= \frac CD$ where $C> A$ and $ B> D$,
which is impossible. In the latter case, we have the same inequalities of Case~1.
                     
{\bf Case 3, $k>l$: } This is broken down into two cases. In both cases, since
$g_k (x_0) > g_k (y_0)$, they bring down the fraction on that side.  Therefore,
for the sake of contradiction, we assume that
                       $$\frac {\frac 1{y_0} + \frac 1 {1-y_0} }   {\frac 1 {x_0}  + \frac 1 {1-x_0} }\geq 
               \frac {\frac 1{y_1} + \frac 1 {1-y_1} +\frac 1 { y_1 -1 +l}+ e}
{ \frac 1 {x_1} + \frac 1 {1-x_1} +\frac 1 { x_1 -1 +l}+ e} ,$$ where $e$ is an
upper limit for the negative of the derivative of $ \log (f (\frac {1-y_1} l
))$ (which is larger than when $y_1$ is replaced by $ x_1$).
                     
{\bf Case 3A, $l=1$:} We show that
$$\frac { \frac 1{y_0} + \frac 1 {1-y_0} } {\frac 1 {x_0} +\frac 1 { 1-x_0}}
\geq \frac { \frac 1{y_0} + \frac 1 {1-y_0} } {\frac 1 {x_0} +\frac 1 {
    1-x_0}},$$ leading to the above contradiction.  From $ e\leq \frac 59$ it
suffices to show that
                      $$(\frac 1{y_0} + \frac 1 {1-y_0} )(\frac 1 {x_1} + \frac 59) \leq 
                     (\frac 1{x_0} + \frac 1 {1-x_0} ) (\frac 1 {y_1} + \frac
59).$$ That $\frac 1 {1-x_0} \ \frac 1 {y_1}< \frac 1 {1-y_0}\ \frac 1 {x_1} $
follows from the choice of $x_1$ and $y_1$.  We use that $x_0< \frac 13$
implies that $y_0 > \frac 23$. It also implies that $\frac 1 { y_1 x_0}$ is
greater than $\frac 1 { x_1 y_0} + \frac 5 9 \frac 1{ 1-y_0}$ and of course
that $\frac 5 {9x_0} $ is greater than $\frac 5 {9y_0}$.

{\bf Case 3B, $l\geq 2$:} We use that $\frac 1 { y_1 + l
-1}< 1$ and $e\leq \frac 5 {18}$, so that their sum is no more than $\frac {23}
{18}$.  We show that $$\frac {\frac 1{y_0} + \frac 1 {1-y_0} } {\frac 1 {x_0} +
\frac 1 {1-x_0} }\geq \frac {\frac 1{y_1} + \frac 1 {1-y_1} +\frac {23}{18}} {
\frac 1 {x_1} + \frac 1 {1-x_1} +\frac {23} {18} } $$ is impossible, or with
cross multiplication that
                   $$(\frac 1{y_0} + \frac 1 {1-y_0} ) ( \frac 1 {x_1} + \frac 1 {1-x_1} +\frac {23} {18} ) \geq 
                   (\frac 1 {x_0} + \frac 1 {1-x_0}) (\frac 1{y_1} + \frac 1
{1-y_1} +\frac {23} {18} ) $$ is impossible. From the choice of $x_1$ and $y_1$
we get $\frac 1 {y_1 (1-x_0)} > \frac 1 {x_1 (1-y_0)}$.  From Lemma~
\ref{lem:J} we have $x_0 + x_1 < \frac 79$, with of course $x_0\leq \frac 1 3$
and $x_1 \leq \frac 7 9$.  From the choice of $x_1$ and $y_1$ we have $\frac 1
{x_0 y_1} > \frac {1-x_0} {x_1 (1-y_0) x_0}$.  From $x_0 < \frac 13$ and
$1-x_1= x_0 + p + x_2+ x_3$ and $p+ x_2 + x_3\geq \frac 2 9$ we have $\frac
{1-x_1} {x_0} > \frac 5 3$. From $1-x_0= p + x_1+ x_2 + x_3$ we have $\frac
{1-x_0} {x_1} > \frac 9 7$.  Together we get $\frac {21} {45} \frac 1 {x_0 y_1}
> \frac 1 {(1-y_0) (1-x_1)}$.  From the choice of $x_1$ and $y_1$ we get that
$\frac { 1-y_0} {y_1} > \frac { 1-x_1} {x_1} > \frac 9 7$. So we can write $
\frac 13 \frac 79 \frac {23} {18} \frac 1 { x_0 y_1} > \frac {23} {18} \frac 1
{1-y_0}$.  Notice that $ \frac 13 \frac 79 \frac {23} {18}= \frac {161} {486}$.
With $\frac {161} {486} + \frac { 21} {45} <1$ we can conclude that $ \frac 1
{x_0 y_1} > \frac 1 {(1-y_0) (1-x_1)} + \frac {23} {18} \frac 1 {1-y_0}$.  It
is only left to show that $$\frac {23} {18} \frac 1 {x_0} + \frac 1{(1-x_0)}
(\frac 1 {y_1} + \frac 1 {1-y_1}) + \frac 1 {x_0 (1-y_1) } > \frac {23} {18}
\frac 1 {y_0} + \frac 1 { x_1 y_0} + \frac 1 {y_0 (1-x_1)}.$$

{\bf Case 3Bi, $l\geq 2$, $x_1\geq \frac 1 2$:} From Lemma~
\ref{lem:J} we get $x_0<1 - \frac 12 - \frac 2 9 = \frac 5 {18}$ and therefore
$y_0 > \frac {13} {18}$, so $\frac {23} {18} ( \frac 1 {x_0}- \frac 1 {y_0})>
\frac {23} {18} \ \frac {144} {65}$.  From the definition of $y_1$ and $x_1$ it
holds that $\frac {y_1} {x_1} < \frac 5 {13}$ With $x_1< \frac 7 9$ and $1-x_0
< y_0$ we get $ \frac 1{(1-x_0)} \frac 1 {y_1} -\frac 1 { x_1 y_0}> \frac {144}
{65}\ \frac 97$.  We have $\frac 1 {x_0 (1-y_1) } > \frac {18} 5$ and with
$x_1$ no more than $\frac 7 9$ and $y_0 > \frac {18} {23}$ we have $\frac 1
{y_0 (1-x_1)} < \frac {23} { 4} $. The quantity $ \frac 1{(1-x_0)} \frac 1
{1-y_1} $ is at least $1$. With $(\frac {23} {18} +\frac 9 7) \ \frac {144}
{65} + \frac {18} 5 + 1> \frac {23} { 4} $, the case is settled.

{\bf Case 3Bii, $l\geq 2$, $x_1\leq \frac 1 2$:} With $x_0<
\frac 13$ and $y_0> \frac 23$ we have $\frac {23} {18} ( \frac 1 {x_0}- \frac 1
{y_0})> \frac {69} {36}$.  From the definition of $y_1$ and $x_1$, it holds that
of $\frac {y_1} {x_1} $ is less than $\frac 1 2$. With $x_1 \leq \frac 12$ and
with $1-x_0< y_0$, we get $ \frac 1{(1-x_0)} \frac 1 {y_1} -\frac 1 { x_1 y_0}>
2$. The quantity $ \frac 1{(1-x_0)} \frac 1 {1-y_1} $ is at least~$1$.  With
$x_1$ no more than $\frac 12$, we have $\frac 1 {x_0 (1-y_1) } > 3$ and $\frac 1
{y_0 (1-x_1)} < 3 $, and the case is settled.

Finally, notice that there are finitely many possibilities for the choice of
degrees $(j_1, j_2, j_3, j_4)$ and $p=\frac k {20,000}$. For each such choice,
any $q^*_k$ small enough so that~$s_k \frac {1-q^*_k} { q^*_k j_k} $ alone
exceeds the total expectation of
$\sum _{i=1} ^4 s_i \frac {1-q _i} { q_i j_i} $ suffices for the lowest value
needed.  The approximation by $\epsilon$ follows from the fact that $\frac 1 x$
is uniformly continuous when positive $x$ is bounded from below.  \end{proof}

Given any fixed $C\in {\cal C}$, we need to determine a conditional
probability distribution $P(\cdot \ | \ C )$ on the paths $\omega$ that belong
to $C$.  We start with the root $x\rightarrow y$ of the chain $C$, and call
$x=x_0$ and $y=y_0$. Let $z_1, z_2, z_3, z_4$ be the points such that~$x_0
\rightarrow z_i$, with $j_1, j_2, j_3, j_4$ the positive integers between $1$
and $9$ such that $j_i +1$ is the degree of $z_i$. Let $q_1, q_2, q_3, q_4$ be
the weights from $x_0$ to the $z_i$ that solve the equalisation process.  The
probability of moving in the direction from $x_0$ to $z_i$ is the quantity
$s_i$ as determined by Lemma~\ref{lem:B}. The probability of moving from $z_i$
to~$x_{i,k} $ is~$\frac {p_{i,k}} {\sum _{l=1}^4 p_{i,l} }$, where $p_{i,l}$ is
the weight given to $z_i$ by $x_{i,k}$.  We continue in this way defining the
probability in terms of these products.
                
With a conditional probability distribution defined on each
chain, and a probability distribution defined on the chains, we need to extend
this to a probability distribution defined on ${\cal P}$. To do this, we use
the expectations on the $P(\cdot \ |\ C )$. In order for this to make any
sense, the conditional values we get on the chains must be Borel measurable. We
get that from Lemma~\ref{lem:A} It could be noticed that if there is symmetry
to the way the quantities are rounded down, for any given choices for~$j_1,
j_2, j_3, j_4$ and $i$ the expectation for the $p_{i,k}$ will be equal. We
do not use this in the proof.

\begin{cor}
  According to the above probability distribution, if the $q_i$ and $p_{i,k}$
  are from the chain minimiser, the expectation of
  $\log (w(p_{i,k} ) - \log ( w(p) ) + \log (p) - \log (q_i) $ is positive.
\end{cor}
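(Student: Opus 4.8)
The plan is to compute the conditional expectation, given the incoming weight $p$, of the quantity in the statement --- which is the generic one-step increment of the process $\phi^w$ of Section~\ref{sec:stochprocess} --- to collapse it to a single scalar attached to the equalisation step, and then to show that scalar is positive; the unconditional statement follows by integrating over $p$.

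Conditioned on being at the vertex $x$ that sends weight $p$ toward $y$, a path continues by (a) passing to one of the four forward vertices $z_1,\dots,z_4$, whose degrees $j_1+1,\dots,j_4+1$ follow the conditioned binomial distribution determined by $\hat q$ as in Lemma~\ref{lem:nonterm} and the discussion after it; (b) choosing the direction $i$ with probability $s_i$ from Lemma~\ref{lem:B}; and (c) choosing among the $j_i$ vertices $x^*$ with $x^*\rightarrow z_i$ with probability proportional to their weights. By the definition of the chain minimiser (after the discretisation of Section~\ref{sec:stochprocess}) those weights are all equal, $p_{i,k}=\frac{1-q_i}{j_i}$, so step (c) is uniform and $\log w(p_{i,k})$ is independent of $k$; and the equalisation process forces $\frac{w((1-q_i)/j_i)}{q_i}$ to be one common value $R=R(j_1,\dots,j_4,p)$ over $i=1,\dots,4$. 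Hence $\log w(p_{i,k})-\log q_i=\log R$ for every $i$ and $k$, so $\sum_i s_i\bigl(\log w(p_{i,k})-\log q_i\bigr)=\log R$ independently of the $s_i$, and the conditional expected increment collapses to
$$\mathbb E\bigl[\log R(j_1,\dots,j_4,p)\bigr]\;-\;\log\frac{w(p)}{p},$$
the remaining expectation being only over the degree tuple. So it suffices to prove $\mathbb E[\log R]>\log\frac{w(p)}{p}$ for every relevant $p$ (in particular $p>\frac1{10}$, which is all Theorem~\ref{thm:paradox} needs).

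Next I would pin down the benchmark. Writing $w(x)=\frac{x}{1-x}f(x)$ with $f$ the product $E_1(x)E_2(x)\prod_{i\ge 0}\frac{4(a_i+x)}{b_i-x}$, where $a_0=3$, $b_0=13$, $a_{i+1}=16a_i+3$, $b_i=4a_i+1$, and $E_1,E_2$ are the exponential correction factors supported on $[0,\frac15]$ and $[\frac12,1]$: for the symmetric tuple $j_1=\dots=j_4=4$ the equalisation gives $q_i=\frac{1-p}4$ and $\frac{1-q_i}{j_i}=\frac{3+p}{16}$, and the identity $\frac{4(a_i+(3+p)/16)}{b_i-(3+p)/16}=\frac{4(a_{i+1}+p)}{b_{i+1}-p}$ telescopes the infinite product, so that $\log R-\log\frac{w(p)}{p}$ equals the difference of the logarithmic corrections of $f$ evaluated at $\frac{3+p}{16}$ and at $p$. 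This difference vanishes for $p\in[\frac15,\frac12]$ and is strictly positive but tiny otherwise; thus the inequality is tight at the symmetric tuple, and for $p\in[\frac15,\frac12]$ all of the drift has to come from the spread of the degree distribution about its mean near $5$, exploited through convexity. Here Lemma~\ref{lem:D} supplies that $-\log(1-\frac{1-x}k)+\log f(\frac{1-x}k)$ is concave and that $xw(x)$ is convex, and Lemma~\ref{lem:J} controls how the weights $q_i$ redistribute when the degrees are unequal; as sketched in the introduction via $\frac1{n-k}+\frac1{n+k}>\frac2n$ and $\frac1{(n-k)(n+k)}>\frac1{n^2}$, a degree below $5$ raises $\log R$ by more than a symmetric degree above $5$ lowers it, and the binomial degrees carry enough mass below $5$ for the net effect to be positive.

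The hard part is this last quantitative step: since $\mathbb E[\log R(j_1,\dots,j_4,p)]>\log\frac{w(p)}{p}$ is tight at the symmetric tuple for $p\in[\frac15,\frac12]$, there is no slack to spend --- the positive margin is only of the order of the exponential corrections in $f$ together with the Jensen gain from the variance of the binomial degrees --- so it must be bounded below uniformly in $p$. Because there are only finitely many tuples $(j_1,\dots,j_4)\in\{1,\dots,9\}^4$ and $\log\frac{w(p)}{p}$ and each $\log R(j_1,\dots,j_4,\cdot)$ are uniformly continuous, this reduces to a finite check on the grid $p=\frac{k}{20,000}$; carrying it out is the purpose of the computer program in Section~\ref{sec:numerical}, with Lemmas~\ref{lem:D} and~\ref{lem:J} furnishing the monotonicity and redistribution bounds that keep the check finite and the evaluation of $R(j_1,\dots,j_4,p)$ well-conditioned.
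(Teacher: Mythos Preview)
Your proposal rests on a misreading of what the $q_i$ and $p_{i,k}$ are. The Corollary explicitly takes them \emph{from the chain minimiser}, which is the colouring obtained by equalising the true functions $u(x^*\to z_i,\,\cdot\,)\cdot v_i$; it is \emph{not} the equalisation process based on $w$. Because $u(x^*\to z_i,\cdot)$ depends on the subchain rooted at $x^*$, the chain-minimiser weights $p_{i,k}$ toward a given $z_i$ need not be equal, and the chain-minimiser $q_i$ need not satisfy $\dfrac{w((1-q_i)/j_i)}{q_i}=\text{const}$. So your collapse ``$\log w(p_{i,k})-\log q_i=\log R$ independently of $i,k$'' is unjustified on both counts: the $p_{i,k}$ are not $(1-q_i)/j_i$, and the $q_i$ do not solve the $w$-equalisation. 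The whole reduction to the scalar $\log R$ therefore fails at the first step.

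What the paper actually does is a two-line comparison argument that bypasses any quantitative estimate. First, Lemma~\ref{lem:D}(c) (convexity of $x\log w(x)$) gives, for each $i$,
\[
\sum_k \frac{p_{i,k}}{\sum_l p_{i,l}}\log w(p_{i,k})\ \ge\ \log w\!\Big(\frac{1-q_i}{j_i}\Big),
\]
because equal $p_{i,k}$ is the unique minimiser of the left side under $\sum_k p_{i,k}=1-q_i$. Second, Lemma~\ref{lem:B} (with the $s_i$ chosen there) gives $\phi_s(q)\ge\phi_s(q')$ for the chain-minimiser $q$ versus the equalisation $q'$. Together, the expected chain-minimiser increment is bounded below by the equalisation-triple increment; that is precisely the inequality $E[\tilde{\phi}^w_{\text{step}}]\ge 0$ used immediately afterwards. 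No numerics enter.

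Your second and third paragraphs are instead aiming at $E_{j}[\log R(j_1,\dots,j_4,p)]>\log\frac{w(p)}{p}$, which is the separate computer-verified Lemma~\ref{lem:K} governing the $\overline{\phi^w}$ part, not this Corollary. Conflating the two makes the Corollary look much harder than it is and, more importantly, leaves the genuine content (handling the \emph{unequal} chain-minimiser weights via Lemmas~\ref{lem:B} and~\ref{lem:D}(c)) unaddressed.
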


\begin{proof}
  It follows from Lemma~\ref{lem:B} and Lemma~\ref{lem:D} part c (from the
  fact that equal quantities of $p_{i,k}$ for a given $i$ defines a critical
  point and from the convexity there is a unique minimiser).
\end{proof}

Recall the definition of $\phi^w_i (\omega)$. We can break this sum into two
parts.  We can perform the equalisation process at each step, and for any
sequence $p_0, q_1, p_1, q_2, p_2, $ $\dots, p_{i-1}, q_i, p_i $ corresponding
to a path $\omega$ define a sequence of triples $(p_0, q_1', p_1'),$ $ (p_1,
q_2', p_2'), \dots , (p_{i-1}, q_i', p_i')$ where the $q_i'$ are defined by the
equalisation process and the $p_i'$ are defined by equality for each weight
going to the same point $z$ in the chain. We can break down the expression of
$\phi^w_i (\omega)$ into two parts, that involving the triples and the
difference. Call $\overline {\phi^w_i}$ the sum of the part involving the
triples and~$\tilde {\phi^w_i }$ the difference $\phi^w_i - \overline
{\phi^w_i}$.  By the above corollary, we have shown that, conditioned on any 
chain, the expectation of $\tilde {\phi^w_i}$ is positive. Now, we turn to the
other part, the~$\overline {\phi^w _i}$.

We need to show that the $\phi^w_i$ functions are unbounded on almost every
chain in ${\cal C}$.  As we use only $20,000$ many values for the $p_i$, it
suffices to do the same for the $\phi^w_i$. As it does not matter where on the
chain the value of $\phi^w_i$ is maximal, it suffices to show that the
expectation of $\phi^w_i$ goes to infinity on almost all chains.  As the
expectation of $\tilde {\phi^w_i}$ is always positive, attention is drawn to
the $\overline {\phi^w_i}$, the part of the process from the sequence of
triples.
                 
We want to define a Markov chain from the triples that define
$\overline {\phi^w_i}$ and show that it defines a submartingale on this Markov
chain that approaches infinity almost everywhere. However strictly speaking the
triples do not define a Markov chain. The problem is that each system of
weights is determined by the membership of some chain $C$ in ${\cal C}$, and
therefore those weights are determined by the future.  However we can relate
this process to a Markov chain through an inequality.
                       
\begin{definition}
  For every $p=\frac k {20,000}$ and every $(j_1, j_2, j_3, j_4)$ (choice of
  $1\leq j_i\leq 9 $) $i=1,2,3,4$ such that $p+ \sum_{i=1}^4 q_i=1$, define
  $w_1 (p, j_1, j_2, j_3, j_4)$ to be the common value for
  $\frac {w( \frac {1-q_i} {j_ i} ) } {q_i} $ from the equalisation process.
  Define $r(p, j_1, j_2, j_3, j_4)$ to be
  $\log (w_1(p, j_1, j_2, j_3, j_4) - \log (w(p))$.  For every choice
  $ (j_1, j_2, j_3, j_4)$ and every $p= \frac k {20,000}$ for all
  $k=1, \dots , 19,999$,  we define
  $$\overline r ( j_1, j_2, j_3, j_4) := \min_{k=1, \dots, 19,999} w_1 ( \frac k
  {20,000},j_1, j_2, j_3, j_4 ). $$
\end{definition}

As before, $\hat q$ is the probability that a chain is terminating, which we
approximated at $\hat q=.991603$.  For each choice of $1\leq j_1, j_2, j_3,
j_4\leq 9$, we sum up the logarithm of $\overline r ( j_1, j_2, j_3, j_4)$
times the probability $\prod_{i=1} ^4 { 9\choose j_i} (\frac {\hat q} 2) ^{j_1}
(1-\frac {\hat q}2)^{9-j_i}$ and divide by $\hat q$ (to condition on the event
that the chain is not terminating) to get the rate of increase $s$.
 
\begin{lem}\label{lem:K}
  The rate $s$, the conditional expectation of $\overline r$, is at least
  $\frac 1 {1,000}$.
\end{lem}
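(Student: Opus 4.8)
The plan is to treat Lemma~\ref{lem:K} as what it is — a finite, completely explicit numerical estimate whose correctness rests on the structural facts already assembled together with a bounded computation. First I would unpack what the statement asserts. By the definition preceding it, $s$ is a sum of $9^4=6561$ terms, one for each degree profile $(j_1,j_2,j_3,j_4)$ with $1\le j_i\le 9$, the $(j_1,\dots,j_4)$ term being $\log\overline r(j_1,\dots,j_4)$ weighted by the conditional probability $\hat q^{-1}\prod_{i=1}^4\binom{9}{j_i}(\hat q/2)^{j_i}(1-\hat q/2)^{9-j_i}$; and $\overline r(j_1,\dots,j_4)$ is itself a minimum over the $19{,}999$ grid values $p=\frac k{20{,}000}$ of the equalisation output $w_1(p,j_1,\dots,j_4)/w(p)$, where $w_1$ is the common value of $\frac{w((1-q_i)/j_i)}{q_i}$ for the unique weights with $\sum_i q_i=1-p$. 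So the object to be bounded below is the output of a terminating algorithm, and the content of the proof is, first, that the algorithm is well posed and, second, that its output, computed with rigorous error bounds, is at least $\frac{1}{1000}$ with room to spare.

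Well-posedness is inherited from the earlier material. For each fixed $p$ and degree profile, the map $q_i\mapsto \frac{w((1-q_i)/j_i)}{q_i}$ is strictly monotone — from the monotonicity of $w$ and the sign computations of Lemma~\ref{lem:D} — so the equalisation value $w_1$ and the weights $q_i$ are uniquely determined and can be located by bisection; this is exactly the uniqueness already used in the proof of Lemma~\ref{lem:B}. The function $w(p)=\frac p{1-p}f(p)$ is computable to arbitrary precision: the two $C^2$ correction factors $e^{-1_{[0,1/5]}(p)\frac{20}{27}(\frac15-p)^3}$ and $e^{-1_{[1/2,1]}(p)\frac14(p-\frac12)^3}$ are elementary, while the infinite product $\prod_{i\ge 0}\frac{4((16^{i+1}-1)/5+p)}{(4\cdot 16^{i+1}+1)/5-p}$ converges geometrically, its $i$-th factor being $1+O(16^{-i})$ uniformly for $p\in[0,1)$, so both $w$ and the derivative data entering $g_k$ are obtained with an explicitly bounded truncation error. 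Propagating these bounds through $\log$, through the minimum over $k$, and through the weighted sum keeps the uncertainty in $s$ far below the margin by which $s$ exceeds $\frac{1}{1000}$.

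I would also record the structural reason the inequality holds, both as a check on the computation and because it explains the constants chosen in $w$. Decompose the one-step drift, as in the discussion preceding the statement, into the triples contribution $\overline{\phi^w}$ governed by $\overline r$ and the deviation $\tilde{\phi^w}$: the latter has positive conditional expectation because $xw(x)$ is convex (Lemma~\ref{lem:D}, part (c)) and, as established in the corollary above, equal forward weights define the unique minimiser, so it is enough to control $\overline r$. Here $w$ was constructed so that the all-degree-$5$ continuation is exactly stationary ($r=0$ when every $j_i=4$), while conditioning on an incoming edge shifts the mean degree of the four $z_i$ above $5$; and because $g_k$ is decreasing (Lemma~\ref{lem:D}) — the reciprocal/convexity phenomenon $\frac1{n-1}+\frac1{n+1}>\frac2n$ — the pain increase forced at a below-average degree strictly outweighs the decrease granted at the complementary above-average degree. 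The $C^2$ deformation of $f$ on $[0,\frac15]$ and $[\frac12,1]$ is tuned so that this remains true uniformly over $p$, including the extreme values, and summing against the conditional binomial weights produces a strictly positive $s$ that the computation pins down at $\ge\frac{1}{1000}$.

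The main obstacle is precisely the non-explicitness of $w$ and of its companion $u$: there is no closed form, so every evaluation of $w$, of the equalisation weights, and hence of $\overline r$ runs through a numerical procedure, and the bound $\frac{1}{1000}$ can only be certified by carrying rigorous error bounds through the roughly $6561\times 19{,}999\approx 1.3\times 10^8$ instances of that procedure. The structural argument shows $s>0$ and gives its rough size, but turning ``positive and of order a few thousandths'' into the guaranteed $s\ge\frac{1}{1000}$ is what the verification deferred to Section~\ref{sec:numerical} supplies; the $20{,}000$-point discretisation exists to make that verification a finite task, while by the rounding-down reduction already described it only decreases the relevant products and therefore loses nothing in the end.
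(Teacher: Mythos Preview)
Your proposal is correct and matches the paper's own treatment: the paper's proof of Lemma~\ref{lem:K} consists of the single sentence ``The proof of Lemma~\ref{lem:K} is done with the help of the computer. A discussion of these numerical calculations can be found in Section~\ref{sec:numerical},'' and the code there carries out exactly the finite bisection-and-sum procedure you describe. Your surrounding discussion of well-posedness (monotonicity from Lemma~\ref{lem:D}, geometric convergence of the product defining $w$) and of the structural reason $s>0$ (the degree-$5$ stationarity of $w$ together with the conditional upward shift in degree) is additional helpful commentary that the paper leaves implicit, but the underlying argument is the same computer verification.
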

The proof of Lemma~\ref{lem:K} is done with the help of the
computer. A discussion of these numerical calculations can be found in Section~\ref{sec:numerical}.
            
\begin{prop} \label {prop:wgrowth}
  The process $ \phi^{\overline w}_i$ converges to positive infinity almost
  everywhere. 
\end{prop}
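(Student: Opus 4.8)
The plan is to realise $\phi^{\overline w}_i$ as a process that dominates a random walk with strictly positive drift and uniformly bounded step variance, and then to conclude by the strong law of large numbers.

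The reduction to such a random walk is prepared by the decomposition already introduced: $\phi^w_i=\overline{\phi^w_i}+\tilde{\phi^w_i}$, where $\overline{\phi^w_i}$ collects the contributions of the equalisation triples $(p_{k-1},q'_k,p'_k)$ and $\tilde{\phi^w_i}$ is the discrepancy between the actual weights and the equalisation weights. By the Corollary following Lemma~\ref{lem:B} --- which rests on the convexity of $xw(x)$ from Lemma~\ref{lem:D}(c) and the uniqueness of the critical point in Lemma~\ref{lem:B} --- every increment of $\tilde{\phi^w_i}$ has nonnegative conditional expectation, so in its Doob decomposition the martingale part has bounded-variance increments (hence grows like $o(i)$) and the compensator is nondecreasing; thus $\tilde{\phi^w_i}$ can only help, and it suffices to show that $\overline{\phi^w_i}$ --- equivalently $\phi^{\overline w}_i$, its version with each $w_1$–value replaced from below by the discretised minimum $\overline r$ --- grows at least linearly almost everywhere.

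The crux is to get around the fact that the triples $(p_{k-1},q'_k,p'_k)$ do \emph{not} form a Markov chain: in any chain $C\in{\cal C}$ the weights are pinned down by membership of the \emph{whole} infinite chain, hence by the future. The remedy is to dominate $\overline{\phi^w_i}$ from below by a process whose stage-$k$ increment depends only on the degree tuple $(j^{(k)}_1,\dots,j^{(k)}_4)$ seen there, by replacing $w_1(p_{k-1},j^{(k)}_1,\dots,j^{(k)}_4)$ with $\overline r(j^{(k)}_1,\dots,j^{(k)}_4)=\min_{n} w_1(n/20{,}000,\,j^{(k)}_1,\dots,j^{(k)}_4)$ --- legitimate because the rounded weight $p_{k-1}$ is one of the values $n/20{,}000$ --- and absorbing the telescoping boundary terms $\log w(p_k)$, which are harmless on the range of weights that occur since those are bounded away from $1$ except in the isolated ``one-step jump'' configuration, treated separately by assigning it a large but finite value with controlled variance. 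Along a path $\omega\in{\cal P}$ the successive degree tuples are, conditionally on the chain being non-terminating, mutually independent, each a four-tuple of independent $\hat q/2$–binomial variables conditioned on non-termination: this is exactly the structure carried by the measure on ${\cal C}$ together with the conditional path measures $P(\cdot\mid C)$ built from Lemma~\ref{lem:B}. Hence the dominating process is a genuine Markov chain --- here a random walk --- with uniformly bounded increments (finitely many degree tuples, finitely many rounded weights), whose increments $\log\overline r(j^{(k)}_1,\dots,j^{(k)}_4)$ have common mean the rate $s$, and $s\ge\frac1{1{,}000}$ by Lemma~\ref{lem:K}.

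With this in place the conclusion is routine: the martingale part of the dominating walk, divided by $i$, tends to $0$ almost surely by Kolmogorov's criterion (bounded step variances), while its compensator grows at rate at least $s\ge\frac1{1{,}000}$, so the walk, and therefore $\phi^{\overline w}_i$, diverges to $+\infty$ for $P(\cdot\mid C)$–almost every path, for almost every $C\in{\cal C}$. The two points that will demand the most care are (i) justifying that, after conditioning the chain to be non-terminating, the degree tuples encountered along a single path really do have the stated independent $\hat q/2$–binomial law --- a spine-type argument for the conditioned branching structure --- and (ii) the bookkeeping of the $\log w$ boundary terms and of the exceptional near-$1$ weight configurations, so that what one finally compares against is literally a Markov chain with bounded increments.
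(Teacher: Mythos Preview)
Your proposal is correct and follows essentially the same route as the paper: pass from the non-Markovian triple process to the genuine Markov chain built from the $\overline r(j_1,\dots,j_4)$ values (which depend only on the degree tuple at each stage), use that the increments are uniformly bounded (finitely many degree tuples and discretised $p$-values) and that the drift is $s\ge 1/1000$ by Lemma~\ref{lem:K}, and conclude by a Kolmogorov-type bound. The only cosmetic difference is that the paper invokes Kolmogorov's maximal inequality directly (bounding $\Pr(|X_n|>ns/2)$ by $4nB/(n^2s^2)\to 0$), whereas you phrase it as the strong law via Kolmogorov's criterion; both work, and your version is arguably cleaner. Your flagged concern (i) about the independence of successive degree tuples along a path after conditioning on non-termination is legitimate and is exactly what the paper sweeps into the one-line assertion ``the Markov chain from the $\overline r(j_1,j_2,j_3,j_4)$ is well defined''; the underlying reason is that distinct vertices in the chain correspond to distinct group elements, hence to independent coordinates of $x\in\{-1,1\}^G$, with the $\hat q$-tilting accounting for the non-termination conditioning.
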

\begin{proof}
  The Markov chain from the $\overline r (j_1, j_2, j_3, j_4)$ is well defined.
  The Kolmogorov inequality states that, if $X_1, X_2,\dotsc$ is a martingale
  starting at $X_0$, then, for~$\epsilon>0$, the probability that
  $\max_{0< i \leq n} |X_i-X_0| > \epsilon$ is no more than the sum of the
  variances of the $X_i-X_{i-1} $ divided by $\epsilon^2$. As only finitely
  many values for $p$ and $(j_1, j_2, j_3, j_4)$ are used, the variances at
  each stage have a uniform bound $B>0$ (determined by the two extremes of
  $j_1= j_2= j_3= j_4=1$ and $p=\frac {19,999} {20,000}$ and
  $j_1= j_2= j_3= j_4=9$ and $p=\frac {1} {20,000}$).  After subtracting the
  $s>0$ a martingale $x_i$ is defined with $X_n = \sum_{i=1} ^n x_i$.  The
  cumulative variance of the process to the $n$th stage is the sum of the
  variances at each stage, which is no more than $nB$.  If the subset where the
  limit superior before removing the $s$ is not infinite has positive measure,
  there must be an $\epsilon >0$ such that for every $n$ the probability that
  $|X_{n} |$ is greater than $\frac {ns}2$ is at least $\epsilon$.  But this is
  not true, since the Kolmogorov inequality says that this probability is not
  greater than $\frac {4nB} {n^2 s^2}$ for every $n$.
\end{proof}

Now we can prove the main result.
   
\begin{prop} \label{prop:1}
  The limit superior of the maximal values of the $\phi_i$ is infinite for
  almost all chains $C\in {\cal C}$. 
\end{prop}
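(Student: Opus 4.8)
The plan is to deduce Proposition~\ref{prop:1} from Proposition~\ref{prop:wgrowth} (the $\overline w$-process tends to $+\infty$ almost everywhere), the Corollary following Lemma~\ref{lem:B} (the ``difference'' part $\tilde{\phi^w}$ drifts upward), and elementary boundedness of the quantities occurring over the finitely many weight values the discretised chain minimiser uses, and then to pass from ``almost every path'' to ``almost every chain.''

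\emph{Step 1: reduce $\phi_i$ to $\phi^w_i$, and reduce to paths.} I would work throughout with the discretised chain minimiser of Section~\ref{sec:stochprocess}: by the remark there it suffices to prove the $\limsup$ statement for that alteration, since any colouring obeying ${\bf Q}$ can be so altered while only decreasing the products. As set up there, the weights occurring in the discretised chain minimiser are bounded away from $1$ (the rare configurations forcing a weight $1$ jump to $+\infty$ and are capped), so, since $w(p)=\tfrac{p}{1-p}f(p)$ with $f\in[\tfrac45,\tfrac85]$, the number $\log p_k(\omega)-\log w(p_k(\omega))=\log(1-p_k(\omega))-\log f(p_k(\omega))$ lies in a fixed bounded interval; fix $-M$ as a lower bound. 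From the definitions $\phi_i(\omega)=\phi^w_i(\omega)+\log p_i(\omega)-\log w(p_i(\omega))\ge\phi^w_i(\omega)-M$ for all $i$ and $\omega$. Using Lemma~\ref{lem:A} for measurability and writing the path measure on ${\cal P}$ as the integral over ${\cal C}$ of the conditionals $P(\,\cdot\mid C)$, it then suffices to show that $\phi^w_i(\omega)\to\infty$ for almost every $\omega\in{\cal P}$: for almost every chain $C$ there is then a path $\omega^*\in C$ with $\phi^w_i(\omega^*)\to\infty$, and hence $\max_{\omega\in C}\phi_i(\omega)\ge\phi^w_i(\omega^*)-M\to\infty$, which is the proposition.

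\emph{Step 2: make both pieces of $\phi^w_i$ grow.} I would split $\phi^w_i=\overline{\phi^w_i}+\tilde{\phi^w_i}$ as in the discussion preceding the Corollary. The minimisation over the admissible weights $p=\tfrac k{20000}$ built into $\overline r$, hence into the process $\phi^{\overline w}_i$, is arranged so that $\overline{\phi^w_i}(\omega)\ge\phi^{\overline w}_i(\omega)$ pointwise, and Proposition~\ref{prop:wgrowth} already gives $\phi^{\overline w}_i(\omega)\to\infty$ for almost every $\omega$. For the difference part, the Corollary following Lemma~\ref{lem:B} shows that, conditioned on the chain and the path so far, each increment of $\tilde{\phi^w_i}$ has strictly positive expectation; because the discretised system has only finitely many local configurations, this expectation is bounded below by some $\eta>0$ and the conditional increment variances are uniformly bounded, so subtracting the conditional means turns $\tilde{\phi^w_i}$ into a martingale with cumulative variance $O(i)$. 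Kolmogorov's inequality, applied exactly as in the proof of Proposition~\ref{prop:wgrowth}, then gives $\tilde{\phi^w_i}(\omega)\to\infty$ for almost every $\omega$. Combining, $\phi^w_i(\omega)\ge\phi^{\overline w}_i(\omega)+\tilde{\phi^w_i}(\omega)\to\infty$ for almost every $\omega$, and Step~1 completes the proof.

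\emph{Main obstacle.} The routine parts will be the boundedness estimates of Step~1, the measurability of Lemma~\ref{lem:A} needed for the disintegration, and the uniform variance bounds coming from finiteness of the discretised system. The delicate point is that the drift produced by $\overline r$ is only of order $s\ge\tfrac1{1000}$ (Lemma~\ref{lem:K}), so one must make sure that neither the passage from $\overline{\phi^w}$ to $\phi^{\overline w}$ nor the downward rounding in the discretised chain minimiser can erode it; this is precisely why the rounding scale is $\tfrac1{20000}$ and always downward, so that $\overline{\phi^w_i}\ge\phi^{\overline w}_i$ holds pointwise while the increments of $\tilde{\phi^w_i}$ stay nonnegative in conditional expectation, via the convexity of $x\mapsto xw(x)$ (Lemma~\ref{lem:D}(c)) and the critical-point identity set up in Lemma~\ref{lem:B}. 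Verifying these sign conditions carefully, rather than the limit theorem itself, is the crux.
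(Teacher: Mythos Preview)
Your Step 1 is fine and matches the paper's implicit reduction. The gap is in Step 2, in your treatment of $\tilde{\phi^w_i}$. The Corollary following Lemma~\ref{lem:B} gives only that the conditional expectation of each increment of $\tilde{\phi^w_i}$ is \emph{nonnegative}: the equalised weights are a minimiser of $\phi_s$ (Lemma~\ref{lem:B}), so the actual chain-minimiser weights yield a value at least as large, with equality possible when the chain minimiser happens to coincide with the equalisation. Nothing in the Corollary, nor in the finiteness of the discretised weight grid, produces a uniform strictly positive lower bound $\eta>0$ on that conditional drift: the chain minimiser depends on the entire infinite future of the chain, so ``finitely many local configurations'' does not capture the relevant conditional expectations. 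With drift only $\ge 0$, Kolmogorov's inequality does not force $\tilde{\phi^w_i}\to\infty$; a mean-zero martingale does not diverge to $+\infty$ almost surely. Thus your claimed conclusion $\tilde{\phi^w_i}(\omega)\to\infty$ for a.e.\ $\omega$ is unsupported, and so is the final combination $\phi^w_i\to\infty$.

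The paper closes this gap by a short contradiction argument that uses only the nonnegativity of $E[\tilde{\phi^w_i}\mid C]$, not strict positivity. Suppose a positive-measure set $A\subset{\cal C}$ of chains has $\sup_i\max_{\omega\in C}\phi_i(\omega)\le M$. Then, by your Step~1 bound, $\phi^w_i(\omega)\le M+O(1)$ for every path $\omega$ in every $C\in A$, hence $E[\phi^w_i\mid C]\le M+O(1)$ on $A$. Since $E[\tilde{\phi^w_i}\mid C]\ge 0$, this forces $E[\overline{\phi^w_i}\mid C]\le M+O(1)$ on $A$. But $\overline{\phi^w_i}\ge\phi^{\overline w}_i$ pointwise and Proposition~\ref{prop:wgrowth} gives $\phi^{\overline w}_i\to\infty$ almost everywhere, which is incompatible with a bounded conditional expectation on a set of positive measure. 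So you do not need $\tilde{\phi^w_i}$ to tend to $+\infty$ at all; you only need it not to pull the expectation down, and nonnegativity of its conditional drift is exactly that.
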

\begin{proof}
  Suppose there is a bound $M$ and a subset $A$ of chains of positive measure
  is such that the highest value of $\phi_i$ for all $i$ in the subset $A$ is
  $M$.  Because the expectation of $ \tilde {\phi^w_i}$ is non-negative, this
  means that in this subset $A$ the expectation of $\overline {\phi^w_i } $
  must be less than $M$. But this is impossible, since
  $\overline { \phi^w _i} $ approaches infinity almost everywhere.
\end{proof}

\section{The Numerical Calculations}
\label{sec:numerical}

The first problem is to calculate the function $w$. As it is defined above, it
is difficult to calculate with great precision because the infinite product
doesn't converge quickly. The influence of each term is approximately
one-fourth of the previous term, and that means to gain accuracy to less than
one-millionth requires the use of around ten terms. After ignoring the
exponential part, we want to convert the infinite product into a few products
followed by a power series. However we notice that the coefficients of
$3$,$13$, $52$, $\dots$ are not easy to work with. We make a simple
substitution, $t= p-\frac 15$, with $t$ now standing for the difference from
the norm of $\frac15$.
   
We start with $\frac {p} {1-p}$. After the substitution $p= t+\frac 15$ we
get $\frac { t+\frac 15} { \frac 45 -t} =\frac { 1+ 5t} { 4 -5t} $. As
multiplying one time by $4$ doesn't change anything, we get $\frac { 1+ 5t} { 1
-\frac 5 4t}$.

Next comes $\frac {3+p} {13-p}$. After the substitution $p= t+\frac 15$ we get
$4\frac { t+3+ \frac 15} { \frac {64} 5 -t}= \frac { 5t+16} { 16 -\frac 54 t}
$. We recognise the pattern
$$ \tilde w (t) := \frac { 1+ 5t} { 1 -\frac 5 4t}\ \frac { 16+ 5t} { 16 -\frac
  54 t}\ \frac { 16^2+ 5t} { 16^2 -\frac 54 t}\cdots.$$ To make accurate
calculations of $\tilde w$, we use the first three products and then change the
rest into the geometric power series. We get
$$ \tilde w (t) := \frac { 1+ 5t} { 1 -\frac 5 4t}\ \frac { 16+ 5t} { 16 -\frac
  54 t}\ \frac { 16^2+ 5t} { 16^2 -\frac 54 t} (1+ \frac {5t} {16^3} ) (1+
\frac {5t} {16^4} ) \cdots $$
   $$(1+\frac 5 {16^3 \cdot 4} t + \frac {5^2} {16^6\cdot 4^2}t^2 + \frac { 5^3} {16^9 4^3} t^3 + \dots ) 
   (1+\frac 5 {16^4 \cdot 4} t + \frac {5^2} {16^8\cdot 4^2}t^2 + \frac { 5^3}
{16^{12} 4^3} t^3 + \dots) \cdots $$ Collecting the $t$ and $t^2$ terms via the
geometric series and including the first $t^3$ term gives a very good
approximation:
    $$ \frac { 1+ 5t} { 1 -\frac 5 4t}\ \frac { 16+ 5t} { 16 -\frac 54 t}\ \frac { 16^2+ 5t} { 16^2 -\frac 54 t}  \big( 1 + \frac5 {3\cdot 4\cdot 16^2 } t+ 
   \frac {100} {16^5 \cdot 9 \cdot 17} t^2 + \frac { 125} { 4^3 \cdot 16^9}
t^3\big).$$ The first $t^3$ term dominates the rest (and true also of the
higher powers of $t$) and so the error is less than $\frac 1 {10^{10}}$. Even
dropping the second power term puts one within~$\frac 1 {10^6}$, which is good 
enough, considering that the rate of expansion is slightly greater than~$\frac
1 {1,000}$.

The first term is easy to calculate with a geometric series: it is
$\frac {5t} {16^3}\ \frac 4 3 = \frac {5t} { 16^2 \cdot 12} $. The second term
comes in two parts. First there are the terms that come directly from the
$t^2$, or
$\frac {5^2t^2} {16^7} \frac {16^2} { 255}= \frac {5^2t^2} {16^5\cdot 255}.$
The rest are products of single powers of $t$. We use that if $a_1, \dots, a_n$
are numbers and we want to calculate $\sum_{i<j} a_i a_j$ we could calculated
instead $\frac 12 ( (a_1+ \dots + a_n) ^2 - a_1^2 - \dots a_n^2)$.  If we want
to calculate $\sum_{0\leq i <j} a b^i $ for some positive $b$ less than $1$ we
get
$\frac 12 ( (\frac a {1-b})^2 - \frac {a^2} {(1-b^2)} ) = \frac { (1+b) a^2 -
  (1-b) a^2} {2(1-b)^2 (1+b) } = \frac {a^2 b} { (1-b)^2 (1+b) }$.  In our case
it is $a=\frac 5 {16^3}$, $b= \frac 14$ and we get
$\frac {25t^2 \frac 14 } { {16} ^6 \frac 9 {16} \frac 54 }= \frac {5 t^2}
{16^5\cdot 9}$.  For the second term we get the sum
$\frac {5 t^2} {16^5\cdot 9} + \frac {5^2t^2} {16^5\cdot 255}= \frac {100 t^2}
{ 16^5\cdot 17\cdot 9}$.
  
The function $\tilde w (t)$ gets converted back to $w(p)$ with the substitution
$t= p-\frac 15$ and the inclusion of the exponentials at the two ends, which appear
on lines~99--110 on Page~32. 


The function \lstinline{generic_thread(j1,j2,j3,j4)} (on Page~31, starting from
line~35) solves for the equalisation process, for each~\lstinline{j1,j2,j3,j4},
and does the bookkeeping for keeping track of the various quantities with which
the statment of Lemma~\ref{lem:J} is concerned. In the final nested loop of the
programme (lines 142--156 on Page~33), the global bounds required by
Lemma~\ref{lem:J} and the calculation of rate~$s$ from Lemma~\ref{lem:K}.

The output of the code confirms the statements of Lemma~\ref{lem:J} and
Lemma~\ref{lem:K}.
\begin{lstlisting}
The value of \hat{q} in Lemma 2: 0.9916

Proof of Lemma 5.
The maximum difference |q_k - q_l| for j_k=j_l+1: 0.22955
The maximum q_k for j_k>j_l: 0.32546
The minimum value of p+q_3+q_4 for j_2>j_1>1: 0.23163

Proof of Lemma 7.
The expectation of the ratios: 0.0010956
\end{lstlisting}

\section{Approximation}
We can define the colouring rule in terms of a problem of local
optimisation. At every point choices are made according to an objective
function, which will be the sum total of three variables corresponding to the
three types of choices that are made, the choice of five weights, the copying
of those weights by adjacent points, and the choice of a passive pain level.
We use the term {\em solution} for a function from~$X$ to the colour set $C$
obeying the rule approximately, so as not to confuse it with "objective
function".

The rule for the active colouring is already phrased in terms of an
optimisation, the minimisation of active pain. As for the passive colourings,
it is easy to make it the result of a minimisation. Let $c(y)$ be the sum total
of weights directed at $y$. Choosing a level of $0\leq b\leq 1$ at $y$ results
in a cost of $ (1-b) \cdot c(y) + b \cdot (1+\frac 1 {2^{11}}) $, with
preference for $b=0$ if $c(y) < 1+\frac 1 {2^{11}}$, preference for $b=1$ if
$c(y) > 1+\frac 1 {2^{11}}$, and any value for $b$ if $c(y) = 1+\frac 1
{2^{11}}$.  The copying of the weight of an adjacent point is done easily by
taking the absolute value of the difference between the weight and the
choice. Approximate copying will be done later in an affine way with finitely
many options when we present the local optimisation again as a Bayesian game.

We can see from its formulation that the invariance of the group $G$ for any
finitely additive extension is necessary for this optimisation problem. At any
point, the weights toward it from different directions are given equal
consideration for determining the passive pain. The same is true for the five
directions involved in the choice of minimal active pain.

Of course for every positive $\epsilon$ there will be a measurable
$\epsilon$-optimal solution where optimality is understood with respect to all
the measurable options.  On the other hand, given a measurable solution, we can
integrate the objective function over the whole space and from the need for the
weights inward to equal the weights outward it follows that expectation of the
objective function will not go below~$\frac 1 9 (2^{-11} $ (from the passive
pain alone).  This does not come close to the $0$ result almost everywhere when
using some non-measurable solutions. Both of these options for understanding
$\epsilon$-optimality are not interesting.

\subsection{Stability}
We are interested in a special kind of $\epsilon$-optimality, which we call
$\epsilon$-stability. For each~$x\in X$, let $t(x)$ be the possible improvement
in the objective function at $x$, keeping the solution for all other $y\neq x$
fixed.  Let $\mu$ be a proper finitely additive extension.  A solution is {\em
  $\epsilon$-stable} (w.r.t. $\mu$) if the $\mu$-expectation of $t(x)$ is no
more than $\epsilon\geq 0$, meaning that there is no finite disjoint collection
$A_1, \dots , A_n$ of $\mu$ measurable sets such that the objection function
can be improved by at least $t_i$ at all points in~$A_i$
and~$\sum_{i=1} \mu (A_i) t_i $ is greater than $\epsilon$.  Another way of
understanding $\epsilon$-stability is that~$X$ is a uncountable space of human
society or molecules, and the solution is $\epsilon$-stable if the gains from
the individual deviations do not add up to an expectation of $\epsilon$.  Our
claim is that there is a positive $\epsilon$ such that no solution that is
measurable with respect to any proper finitely additive extension is
$\epsilon$-stable (and likewise for any $\epsilon^* <\epsilon$). This does not
mean that if the deviations happened simultaneously there would be such an
improvement for all concerned; indeed the result may be worse for all
concerned.

There are two ways that a measurable solution must obey
$\epsilon$-stability. First, the set where there is significant divergence from
optimality must be small. Second, where divergence from optimality exists in a
subset of large measure, that divergence must be small. That can be formalised
in the following way: if a solution is $\epsilon \cdot \delta$-stable, then the
subset where it diverges from optimality by more than $\delta$ cannot be of
measure more than $\epsilon$.

For any $\delta>0$ the rule ${\bf Q}^{\delta}$ applies to a point $x$ if all
three aspects of the colour at~$x$ (choosing weights, copying weights for each
direction separately, and responding with passive pain) are $\delta$-optimal at
$x$ with respect to the rule ${\bf Q}$ and furthermore in its passive role
there is no terminating point $x^*$ of odd level such that $x^*\rightarrow x$
and the weight given by~$x^*$ to~$x$ is more than $\frac 1{10} 2^{-12}$.  The
condition on non-terminating points is a way to ignore the terminating points
and reduce our analysis to the non-terminating points.  By $\delta$-optimal we
mean that an improvement by $\delta$ in each aspect is allowed, but no more. In
this way the rule becomes a closed relation.  That the $\delta$ applies to each
aspect of the colouring separately greatly simplifies the following analysis.

Assume that there is an option to choose $a$ or $b$, $a$ gives a cost of $0$,
$b$ a cost of $(1-\delta)$-optimality for a positive $\delta$ means that
there cannot be more than $\delta$ weight given to $b$, since otherwise by
switching one could gain by more than $\delta$. A choice of exactly $\delta$
for $b$ and $1-\delta$ for $a$ is $\delta$-optimal, because by switching to $a$
only a gain of $\delta$ can be accomplished.

Again we introduce the concept of the stochastic process on non-terminating
points, except that the rule ${\bf Q}$ is replaced by the approximate rule
${\bf Q}^{\delta}$.  The stochastic process is defined only for the
non-terminating points, so that we retain the analysis using the probability
$\hat q$ for non-terminating points.  As before, minimising of the future pain
levels is done with the function $w$ used at every stage.  And as before, the
analysis is almost identical, showing that with near certainty the pain, both
passive and active, must reach unobtainable levels and therefore the assumption
of a significant probability of passive pain at level $1$ is not possible.
There are two main differences however. First, we cannot make this claim for
all positive passive pain levels, as we did for the ${\bf Q}$ rule. If the
passive pain level is small compared to~$\delta$, one could slip away from the
logic of the rule.  Second, we have to re-introduce the influence of the
terminating points, for the same reason, that extremely small pain levels could
be involved. The $w$-process is defined only on non-terminating points, but to
make it apply properly we have to assume that the contributions from
terminating points are sufficiently small.
     
With $\delta$ sufficiently small, the ${\bf Q}^{\delta}$ rule implies that the
quantities directed to a passive point with passive pain of at least $\delta$
must be at least $1+ \frac 9 {20,000} $. After rounding down to quantities of
the form $\frac k {20,000}$ for positive integers $k$, we have the same
structure as before, that the weights toward each passive point add up to $1$
(with those rare exceptions already discussed above).
                            
We must still deal with the terminating points and a subset where
the ${\bf Q}^{\delta}$ might not apply.

\begin{lem}\label{lem:5}
  Let $y$ have passive pain of level $v$, $x$ be a terminating point of odd
  level $n$ in the chain generated by $x\rightarrow y$ with $x$ giving $y$
  weight of at least $\frac 1 {10\cdot 2^{12} } $. Furthermore assume that each
  point between $y$ and the terminating point of level $0$ satisfies the rule
  ${\bf Q}^{\delta}$. It follows that
  $\delta \geq v(\frac {2^{-12}} {10} )^{ {n+1} } $.
\end{lem}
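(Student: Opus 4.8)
The plan is to argue by induction on the odd level $n$ of the terminating point $x$, tracking how the weight (and hence the passive pain) must decay as we move along the alternating chain from $y$ out toward the degree-one vertex at level $0$. The governing heuristic is Lemma~\ref{lem:terminating}: if the rule $\bf Q$ were satisfied exactly, a terminating point would receive no weight toward a painful vertex, so under ${\bf Q}^\delta$ any weight it does send must be ``paid for'' by a $\delta$-sized slack at each of the intervening points. Accumulating one factor of (roughly) $\frac{2^{-12}}{10}$ per step of the chain, over the $n+1$ steps from $x$ down to level $0$, produces the claimed bound $\delta \geq v\bigl(\tfrac{2^{-12}}{10}\bigr)^{n+1}$.

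First I would set up the base case $n=1$. Here $x \rightarrow y$ with $x$ terminating of level $1$, so there is some $z \neq y$ with $x \rightarrow z$ and $z$ of degree one (level $0$). Since $z$ has degree one, the only weight it can receive is what $x$ sends it, which is at most $1$, so $z$ experiences no passive pain under the exact rule; in particular $x$ could route all its weight to $z$ at active pain $0$. Because $x$ satisfies ${\bf Q}^\delta$, the active pain it actually incurs toward $y$ is at most $\delta$. But that active pain equals (weight from $x$ to $y$) $\times$ (passive pain $v$ at $y$), which by hypothesis is at least $\frac{1}{10\cdot 2^{12}} \cdot v$. Hence $\delta \geq v \cdot \frac{2^{-12}}{10}$, which is the $n=1$ case.

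For the inductive step, suppose the statement holds for odd level $n-2$ and let $x$ be terminating of odd level $n$ in the chain generated by $x\rightarrow y$, with every point between $y$ and level $0$ obeying ${\bf Q}^\delta$. By the definition of terminating level, $x \rightarrow z$ for some $z$ (further from $y$) that is terminating of even level $n-1$, and $z$ in turn has some $x^*\rightarrow z$ (with $x^*$ further out) that is terminating of level $n-2$. The key quantitative move is: the active pain at $x$ toward $y$ is at most $\delta$, and it equals (weight $x\to y$)$\times v \geq \frac{2^{-12}}{10} v$; meanwhile, since $x$ satisfies ${\bf Q}^\delta$, the active pain toward $z$ can exceed that toward $y$ by at most $\delta$, so $z$'s passive pain $v'$ times the weight $x\to z$ is at most $\delta + \frac{2^{-12}}{10}v \cdot$(something bounded); one shows $v' \geq \frac{2^{-12}}{10} v$ up to the admissible slack, essentially because the weight $x$ sends toward $z$ is bounded below once it is sending a nonnegligible weight toward the painful $y$. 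Then $x^*$ sends weight at least $\frac{1}{10\cdot 2^{12}}$ toward $z$ (this requires that, as in the equalisation discussion, not all the inward weight at $z$ can be concentrated at one other vertex without $z$ being painful), so the induction hypothesis applied at the chain $x^*\rightarrow z$ with passive pain $v'$ gives $\delta \geq v'\bigl(\tfrac{2^{-12}}{10}\bigr)^{n-1} \geq v\bigl(\tfrac{2^{-12}}{10}\bigr)^{n}$. A final single step — absorbing one more factor for the $x\to z$ link versus the $x\to y$ link — upgrades the exponent to $n+1$.

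The main obstacle I anticipate is the bookkeeping at the even-level vertex $z$: one must show that the passive pain $v'$ at $z$ is not much smaller than $v$, which amounts to controlling the \emph{weight} $x$ routes toward $z$ from below. The subtlety is that $x$ might spread its weight over several descendants, so no single $z_i$ need receive a large weight; here one uses exactly the kind of redistribution argument from the equalisation section (the ``$0.055$'' bound discussion) — if $x$ sent a tiny weight toward some painful descendant it could strictly improve, contradicting ${\bf Q}^\delta$ unless that descendant's pain is itself tiny. Making this precise while keeping all the $\delta$-slacks additive (rather than compounding uncontrollably) is the delicate part; the choice of the threshold $\frac{1}{10\cdot 2^{12}}$ in the hypothesis is presumably tuned precisely so that this goes through cleanly.
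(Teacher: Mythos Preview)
Your overall plan—induct on the odd level $n$, propagating pain along the alternating chain toward the degree-one vertex and using that the level-$0$ point can carry only $O(\delta)$ passive pain—is exactly the paper's approach. But the execution of your inductive step is backwards, and this is a genuine gap rather than a bookkeeping detail.

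In your step for $n\geq 3$ you write ``the active pain at $x$ toward $y$ is at most $\delta$''. That claim is only available in the base case $n=1$, where $x$ has a degree-one neighbour $z$ with (essentially) zero passive pain, so the minimal active cost is $0$ and $\delta$-optimality bounds the \emph{realised} cost by $\delta$. For $n\geq 3$ there is no such painless neighbour of $x$, and nothing forces the active pain toward $y$ to be small. The paper's argument runs in the opposite direction: because $x$ sends weight at least $\tfrac{1}{10\cdot 2^{12}}$ to $y$, the active pain at $x$ in the $y$-direction is \emph{large}, at least $\tfrac{v}{10\cdot 2^{12}}$ up to a $\delta$-copying error; $\delta$-optimality then forces the active pain in \emph{every} direction, in particular toward the level-$(n{-}1)$ point $z$, to be at least this much minus $\delta$; and since the weight $x$ sends toward $z$ cannot exceed $1-\tfrac{1}{10\cdot 2^{12}}$, the passive pain at $z$ is at least $\tfrac{v}{10\cdot 2^{12}}$ minus lower-order terms. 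That is the propagation step, and it yields a \emph{lower} bound on $v'$, not the upper bound your inequality ``active pain toward $z$ can exceed that toward $y$ by at most $\delta$'' would give.

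Your second difficulty—securing that some $x^*$ of level $n-2$ sends weight at least $\tfrac{1}{10\cdot 2^{12}}$ toward $z$ so as to invoke the inductive hypothesis on the edge $x^*\rightarrow z$—is real, and your proposed fix via the equalisation discussion does not work: the extra clause in ${\bf Q}^{\delta}$, which is assumed to hold at $z$, says precisely that every terminating odd-level $x^*$ sends $z$ weight \emph{at most} $\tfrac{1}{10\cdot 2^{12}}$, so you cannot get the required lower bound that way. Once the propagation step is oriented correctly (lower bound on passive pain at each even level, shrinking by one factor of $\tfrac{2^{-12}}{10}$ per active vertex traversed), the induction is simply the iteration of that step down to level $0$, where the passive pain is bounded above by $O(\delta)$; this is how the paper closes the argument.
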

\begin{proof}
  Let $x$ be terminating of level $n\geq 3$.  Because $x$ gives weight of at
  least $\frac 1 {10\cdot 2^{12}}$ to $y$, its active pain is at least
  $v 10\cdot 2^{12}-\delta 10\cdot 2^{12})$ in all directions and therefore the
  terminating point of level $n-1$ next to $x$ has passive pain of at least
  $v 10\cdot 2^{12}-\delta 10 \cdot 2^{12} $ (as the weight given to any other
  point cannot exceed~$1- \frac 1 {10\cdot 2^{12}}$).  The result follows by
  induction, after noticing that a point of $0$ terminating level (degree $1$)
  cannot have a passive pain level of more than $\delta 2^{-11}$.
\end{proof}

\begin{lem}\label{lem:6}
  Let $i$ be odd and let $q_i$ be the probability of a chain $x\rightarrow y$
  having terminating level $i$ (meaning that $x$ is a terminating point of
  level $i$).  Then the probability $q_1$ is less than $\frac 1 {128}$ and the
  probability of $q_{\frac {i-1} 2} $ is less than
  $\frac 1 {128} (\frac 16) ^{\frac {i-1} 2}$.
\end{lem}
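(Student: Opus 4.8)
The plan is to read off the recursion satisfied by the numbers $q_i$ from the tree structure of a chain and then extract a crude geometric estimate. Recall that a chain generated by $x\rightarrow y$, viewed as a rooted tree with root $x$, has the following shape: the root and every node at even distance (the ``active'' nodes) has exactly four children, all present; every node $z$ at odd distance (the ``passive'' nodes) has, conditioned on the edge joining it to its parent, a number of children distributed as $\mathrm{Bin}(9,\tfrac12)$, each of which when present is again an active node; and the $e$-coordinates governing orientation and the existence of the optional edges all sit at distinct group elements, hence are independent. The distribution of the subtree below an active node does not depend on its position or on the edge above it, so $q_i$ really is the probability that a generic active node is a terminating point of level $i$. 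Finally, from the semantics of the rule $\mathbf{Q}$ — exactly as in the proof of Lemma~\ref{lem:terminating} — an active node is a terminating point iff \emph{at least one} of its four children is a terminating point, and then its level is $1$ plus the \emph{minimum} of the levels of its terminating children (the node may place all its weight on the cheapest terminating descendant at no active pain); a childless passive node is terminating of level $0$, and a passive node with children is terminating iff \emph{all} of its children are terminating points, with level $1$ plus the \emph{maximum} of their levels.

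Next I would set up the generating identities. For even $\ell\ge 0$, let $\beta_{\le\ell}$ be the probability that a passive node is a terminating point of level $\le\ell$; treating its nine potential children independently — absent with probability $\tfrac12$, or present and an active terminating point of level $\le\ell-1$ with probability $\tfrac12\sum_{k\text{ odd},\,k\le\ell-1}q_k$ — gives
$$\beta_{\le\ell}=2^{-9}\Bigl(1+\!\!\sum_{k\text{ odd},\,k\le\ell-1}\!\!q_k\Bigr)^{9}.$$
Writing $i=2L+1$ and $Q_L:=q_1+q_3+\dots+q_{2L-1}$ (so $Q_0=0$), and using that the root is an active node with four independent passive children, one gets
$$q_{2L+1}=\bigl(1-\beta_{\le 2L-2}\bigr)^{4}-\bigl(1-\beta_{\le 2L}\bigr)^{4}\;\le\;4\bigl(\beta_{\le 2L}-\beta_{\le 2L-2}\bigr),$$
the inequality from $v^4-u^4\le 4(v-u)$ for $0\le u\le v\le 1$. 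Since $\beta_{\le 2L}-\beta_{\le 2L-2}=2^{-9}\bigl((1+Q_L)^9-(1+Q_{L-1})^9\bigr)$ with $Q_L-Q_{L-1}=q_{2L-1}$, convexity of $t\mapsto t^9$ bounds the bracket by $9(1+Q_L)^8 q_{2L-1}\le 9(1+Q_\infty)^8 q_{2L-1}$, where $Q_\infty=\sum_{k\text{ odd}}q_k=1-\hat q$ is the total probability that an active node is a terminating point (Lemma~\ref{lem:nonterm}). Hence $q_{2L+1}\le\tfrac{9}{128}(2-\hat q)^{8}\,q_{2L-1}$.

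It then remains to verify two elementary numerical facts. For the base case, $q_1=1-(1-2^{-9})^4<4\cdot 2^{-9}=\tfrac1{128}$ by Bernoulli's inequality. For the inductive step, with $\hat q\approx .991603$ one has $2-\hat q<1.009$, so $\tfrac{9}{128}(2-\hat q)^{8}<\tfrac{9}{128}\cdot 1.08<\tfrac16$, giving $q_{2L+1}<\tfrac16\, q_{2L-1}$; iterating from the base case yields $q_i<\tfrac1{128}(\tfrac16)^{(i-1)/2}$ for every odd $i$. The only step that is not bookkeeping is the first one — correctly extracting from the colouring rule that an active node needs just \emph{one} terminating descendant (so its level is governed by a minimum) while a passive node needs \emph{all} of its upstream neighbours to be terminating (so its level is governed by a maximum); this ``min at active nodes, max at passive nodes'' asymmetry is exactly what forces the passage from level $2L-1$ to level $2L+1$ to cost a factor of at most $\tfrac16$, and the numerical value of $\hat q$ enters only through the comfortable inequality $\tfrac{9}{128}(2-\hat q)^8<\tfrac16$.
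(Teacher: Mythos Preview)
Your argument is correct and follows essentially the same route as the paper: both establish a geometric recursion $q_{i}\le c\,q_{i-2}$ with $c<\tfrac16$ from the base case $q_1<\tfrac1{128}$. The paper bounds $P(z\text{ terminating of level }i-1)$ directly by a union bound over the nine potential children of $z$ (pick the one at level $i-2$, force the other eight to be absent-or-terminating), obtaining $q_i\le 4\cdot 9\,(1-\tfrac{\hat q}{2})^{8}q_{i-2}=\tfrac{9}{64}(2-\hat q)^{8}q_{i-2}$; you instead pass through the cumulative probabilities $\beta_{\le\ell}$, telescope, and apply convexity of $t\mapsto t^9$, arriving at the slightly sharper constant $\tfrac{9}{128}(2-\hat q)^{8}$. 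One small remark: the paper's definition allows an active node to be ``a terminating point of level $i$'' for several odd $i$ simultaneously (one for each even level appearing among its children), whereas you tacitly assign it the single level $1+\min$; your $q_i$ is therefore the probability that this \emph{minimum} equals $i$, which is no larger than the paper's $q_i$ and is in any case the quantity actually needed when the lemma is combined with Lemma~\ref{lem:5} in the proof of Theorem~\ref{thm:nostable}.
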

\begin{proof}
  The probability that $x\rightarrow z$ and $z\neq y$ is terminating of level
  $0$ is exactly~$2^{-9}$. Since there are four such $z$, $q_1$ is no more
  than~$4\times 2^{-9}$.  Now assume that $x\rightarrow z$ and~$z\neq y$ is a
  terminating point of level $i-1$. There is at least one $x^*$ that is a
  terminating point of level $i-2$ with $x^*\rightarrow z$ and no other
  $\hat x\rightarrow z$ that is non-terminating. The probability is no more
  than~$9\cdot ( 1-\frac {\hat q} 2) ^8 q_{i-2}$, where $\hat q$ is
  approximately $.991603$.  Since this could happen in any one of four places,
  $q_{i}$ is no more than $4\cdot 9\cdot ( 1-\frac {\hat q} 2) ^8 q_{i-2}$. The
  conclusion holds by induction and that
  $4\cdot 9\cdot ( 1-\frac {\hat q} 2) ^8 <\frac 16$.
\end{proof}

The argument that the expectation over the paths $\omega$ in ${\cal P}$ of the
sequences
$\log (p_0(\omega)) - \log (q_1(\omega)) + \log (p_1(\omega) ) - \log
(q_2(\omega) ) + \dots - \log (q_i(\omega) ) + \log (p_i (\omega))$ approaches
positive infinity does not use the ${\bf Q}$ rule, rather holds for any choice
of the sequences $p_0 , q_1, \dots , p_i$. All that was required to define the
stochastic process, and the Markov chain lying within it, is that expectations
for the $p_i$ and $q_i$ values are well defined at each stage.  We could do
this in at least one of two ways.  One way would be to define a unique chain
minimiser with the ${\cal Q}^{\delta}$ rule, show that it is Borel measurable,
and proceed in the same way as before.  Another way would be to work directly
with any finitely additive $G$-invariant measure. We choose the latter way.  To
do it the latter way, we prefer to reformulate the stochastic process with only
finitely many possibilities at each stage.  These choices for the finitely many
values must be independent of the distributions implies by the finitely
additive measure, otherwise we may run into trouble due to the lack of
countable additivity. We are justified in this by Lemma~\ref{lem:B}.  In what
follows, we assume that there are finitely many values for the $q_i$ and $p_i$
and that with this assumption the expectation of $\overline r$ is at least
$\frac s 2$.  To define the stochastic process, we use that the colouring
function is measurable according to any finitely additive process. But implicit
in the probability calculations following the binomial expansion is that the
finitely additive measure is proper.
     
\begin{lem}\label{lem:Q}
  Let $\delta>0$ be smaller than $\frac 1 {20,000}$, let $v>0$ be the passive
  pain level at~$y$, let $p>\frac 1 {20,000} $ be a weight from $x$ to $y$
  satisfying the ${\bf Q}^{\delta}$ rule, let $v_i$ be the passive pain level
  at $z_i$ satisfying the ${\bf Q}^{\delta}$ rule with $x\rightarrow z_i$, and
  let $q_i$ be a weight satisfying the ${\bf Q}^{\delta}$. It follows that
  $\log (v_i)\geq \log (p) - \log ( {q_i} ) - (30,000 )^2\cdot \delta \cdot
  \log (\frac 1 v)$.
\end{lem}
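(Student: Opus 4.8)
The right-hand side of the asserted inequality is the $\delta$-corrupted version of the exact equalisation identity that drives the whole chain analysis. Under the rule ${\bf Q}$, if $x$ puts positive weight $p$ on the edge to $y$ (passive pain $v$ there) and positive weight $q_i$ on the edge to $z_i$, then minimality of active pain forces the two active pains to coincide, $q_i v_i = p v$, i.e. $\log v_i = \log p - \log q_i + \log v$. My plan is to show that under ${\bf Q}^{\delta}$ this identity survives up to an additive error of size $(30{,}000)^2\delta\log(1/v)$, which then subsumes the exact $\log v$ term (it is non-positive) and yields the stated estimate.

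The main step uses the $\delta$-optimality of the active colour at $x$, which is one of the three separate $\delta$-optimality conditions packaged into ${\bf Q}^{\delta}$. If $v_i$ were far below $pv/q_i$, then the edge to $z_i$ would carry strictly smaller active pain than the edge to $y$, and $x$ could lower its active-pain objective by diverting an amount $\eta$ of weight from the $y$-edge to the $z_i$-edge, with an improvement of at least $\eta(v-v_i)$ provided this does not push the incoming total at $z_i$ past the passive threshold $1+2^{-11}$. The divertible amount $\eta$ is the minimum of $p$ --- which by hypothesis exceeds $\frac{1}{20{,}000}$ --- and the room still available at $z_i$; capping the improvement by $\delta$, and using that the passive-pain choices at $y$ and at $z_i$ are themselves $\delta$-optimal responses to the weights arriving there, converts the additive inequality into $\log v_i\ge\log p-\log q_i+\log v$ up to an error controlled by $\delta$ and by the number of levels of the chain separating the scale $v$ from a scale of order $1$.

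The hard part is precisely this conversion from an additive $\delta$-slack --- and from the fact that under ${\bf Q}^{\delta}$ a passive pain may itself be $\delta$-suppressed below its ideal value --- into a multiplicative bound. Since $v>0$ already forces roughly $1+2^{-11}$ units of weight into $y$, I would propagate the one-step estimate down the bounded-branching tree around $x$ exactly in the spirit of Lemma~\ref{lem:5}: each level changes the pain scale by only a bounded multiplicative factor (degrees lie between $1$ and $9$, the relevant weights are bounded below, and $xw(x)$ is convex by Lemma~\ref{lem:D}(c)), so after $O(\log(1/v))$ levels the scale has climbed from $v$ to a constant and the $\delta$-slack collected en route totals at most $(30{,}000)^2\delta\log(1/v)$, the constant $(30{,}000)^2$ being a deliberately wasteful bound absorbing the $20{,}000$-fold discretisation of the weights together with the per-level blow-up. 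The one genuinely degenerate possibility, $v_i=0$, is disposed of separately: it forces the incoming total at $z_i$ below threshold, so that $x$ --- carrying weight $p>\frac{1}{20{,}000}$ on the $y$-edge --- could profitably move that weight onto the $z_i$-edge unless $pv\le\delta$, i.e. unless $v$ is itself of order $\delta$, in which case $\log(1/v)$ is already large enough for the inequality to hold trivially. Assembling the main case with this degenerate case gives the lemma.
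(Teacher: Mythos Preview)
Your opening paragraph correctly identifies the mechanism: under $\mathbf{Q}$ the equalisation forces $q_i v_i = pv$, and the task is to show this survives under $\mathbf{Q}^{\delta}$ with a controlled error. But the execution then takes a wrong turn.

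The lemma is a purely local, one-step estimate at the single vertex $x$; no tree iteration is involved. The paper's argument is this. Since the copy of $p$ stored at $y$ is within $\delta$ of $p$, the active pain at $x$ in the $y$-direction is at least $vp-\delta$. If the active pain in the $z_i$-direction fell below $vp-2001\delta$, then diverting the entire weight $p>\frac{1}{20{,}000}$ from the $y$-direction to the $z_i$-direction would lower $x$'s active objective by more than $\delta$, contradicting $\delta$-optimality of the active colour at $x$. Since the copy of $q_i$ at $z_i$ is also within $\delta$ of $q_i$, this yields $q_i v_i \geq vp-2002\delta$, i.e.\ $v_i \geq (vp-2002\delta)/q_i$. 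Taking logarithms and absorbing the constants via $p\geq\frac{1}{20{,}000}$ and $\delta\leq\frac{1}{20{,}000}$ gives the bound directly; the factor $(30{,}000)^2$ is a single crude constant, not an accumulated error over many levels.

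Two points in your proposal are genuine misunderstandings. First, the divertible amount is \emph{not} limited by ``the room still available at $z_i$''. The active colour of $x$ is a point in $\Delta(\{1,\dots,5\})$ and $x$'s objective is the linear functional $\sum_j p_j a_j$ with the active pains $a_j$ fixed by the \emph{other} vertices' colours; whether $z_i$ crosses its passive threshold is $z_i$'s concern, not $x$'s, so the full weight $p$ is available to shift. Second, the proposed iteration ``down the bounded-branching tree for $O(\log(1/v))$ levels'', together with the appeals to Lemma~\ref{lem:5} and Lemma~\ref{lem:D}(c), is unnecessary and does not correspond to anything in the statement: there is no tree here, only the two edges $x\rightarrow y$ and $x\rightarrow z_i$. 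You have conflated this lemma with its \emph{application} in Theorem~\ref{thm:nostable}, where the one-step estimate is indeed iterated along a chain of length $N$; but that iteration happens outside the lemma, not inside it.
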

       
\begin{proof}
  The copying of the weight $p$ at $y$ must be within $\delta$ of $p$.  Hence
  the active pain at $x$ in the direction of $y$ must be at least $vp
  -\delta$. If the active pain in the direction of $z_i$ were not at least
  $vp-2001\delta $, there would be a gain of at least $\delta$ by replacing all
  the weight in the $y$ direction over to the $z_i$ direction. As the copying
  of the weight $q_i$ in the $z_i$ direction is within $\delta$, it follows
  that by choosing $q_i$ in that direction the active pain at $x$ is also
  within $\delta v_i$ of $q_i v_i$. We conclude that $v_i$ is at least
  $\frac {vp - 2002\delta} {q_i} $. The rest follows by taking the $\log $ of
  both sides and that $p\geq \frac 1 {20,000}$ and
  $\delta \leq \frac 1 {20,000}$.
\end{proof}

\begin{thm}\label{thm:nostable}
  There is a positive $\gamma $ small enough so that there is no
  $\gamma$-stable solution to the rule $\bf Q$ that is measurable with respect
  to any proper finitely additive extension.
\end{thm}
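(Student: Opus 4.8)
The plan is to run the argument of Proposition~\ref{prop:1} (which yields Theorem~\ref{thm:paradox}) almost verbatim, but with the exact rule ${\bf Q}$ replaced by its approximation ${\bf Q}^{\delta}$ and with the Borel chain‑minimiser machinery replaced by a computation carried out directly against an arbitrary proper finitely additive extension $\mu$. Suppose for contradiction that for every $\gamma>0$ there is a $\gamma$‑stable solution measurable with respect to some proper finitely additive $\mu$. Fix a small $\delta>0$ (to be pinned down at the very end in terms of the drift rate $s$ of Lemma~\ref{lem:K}) and a small $\eta>0$, and take $\gamma:=\delta\eta$. By the stability principle already noted --- a $\delta\eta$‑stable solution diverges from optimality by more than $\delta$ only on a set of $\mu$‑measure at most $\eta$ --- outside an exceptional set $E$ with $\mu(E)\le\eta$ every point is $\delta$‑optimal in all three aspects of its colour; after enlarging $E$ by the (small) set flagged by the terminating‑point clause, we may assume ${\bf Q}^{\delta}$ holds off $E$.

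Next I would neutralise the terminating points. Lemma~\ref{lem:6} gives a geometric bound $\frac{1}{128}(1/6)^{(i-1)/2}$ on the probability that a chain has odd terminating level $i$, while Lemma~\ref{lem:5} forces $\delta\ge v(2^{-12}/10)^{n+1}$ whenever a terminating point of level $n$ pushes appreciable weight toward a passive point of pain $v$. Together these say that for the range of passive‑pain values $v$ that actually matter --- which will be bounded below in terms of $\delta$ --- no terminating point of appreciable level can contribute appreciable weight, so the stochastic analysis can be confined to non‑terminating chains exactly as in Section~\ref{sec:stochprocess}, retaining the conditioning probability $\hat q$. I would then set up the path process on ${\cal P}$ as in Section~\ref{sec:stochprocess}, built from the function $w$ and the equalisation process, but with only finitely many admissible values for the $p_i$ and $q_i$; this finite reformulation is exactly what Lemma~\ref{lem:B} licenses, and it is what makes the argument survive the mere finite additivity of $\mu$ (no countable‑additivity hypothesis is invoked). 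The one place the exact ${\bf Q}$‑analysis used an equality, namely $v_i q_i = vp$, is replaced by the inequality of Lemma~\ref{lem:Q}: $\log(v_i)\ge\log(p)-\log(q_i)-(30{,}000)^2\,\delta\,\log(1/v)$.

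With this in hand, the drift computation goes through with an extra error term. Each one‑step increment of $\phi^w_i$ loses at most $(30{,}000)^2\delta$ times the current pain exponent, so if we only run the process until the accumulated pain exponent reaches a fixed finite height (enough to contradict the assumed positive measure of passive‑pain‑$1$ points), the per‑step error is at most a fixed multiple of $\delta$; choosing $\delta$ small relative to $s$ keeps the conditional expectation of the increment of $\overline{\phi^w_i}$ at least $s/2>0$, while the increment of $\tilde{\phi^w_i}$ retains nonnegative conditional expectation by the corollary to Lemma~\ref{lem:B} (convexity of $xw(x)$, Lemma~\ref{lem:D}(c)). The Kolmogorov‑inequality argument of Proposition~\ref{prop:wgrowth} then shows that, off an $\eta$‑fraction of chains, $\phi^w_i$ reaches arbitrarily high levels; as in Proposition~\ref{prop:1} this forces active/passive pain above $1$, which is impossible, so passive pain equals $1$ only on a set of $\mu$‑measure bounded by a fixed multiple of $\eta$. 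Consequently the average weight flowing into vertices is strictly below $1$ (the bound $1-2^{-21}-\frac{2^{-22}}{9}$ from the start of Section~\ref{sec:stochprocess}, slightly relaxed), while the weight out of every point is exactly $1$. Approximating the ten directional weights by integer multiples of $1/N$ as in Section~2 produces finitely many $\mu$‑measurable partitions of $X$ whose shifts inject $X$ into a set of $\mu$‑measure $<1$, contradicting $G$‑invariance and finite additivity of $\mu$ by inclusion--exclusion. Since $\mu$ was arbitrary among proper finitely additive extensions, fixing $\gamma=\delta\eta$ with $\delta,\eta$ as chosen above finishes the proof.

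The main obstacle is the step just described: preventing the error term $(30{,}000)^2\delta\log(1/v)$ from swamping the positive drift over infinitely many stages. The resolution is to require the process only to climb to a \emph{fixed} finite height --- which already contradicts the assumed positive measure of passive‑pain‑$1$ points --- so that the cumulative error over that many steps is a bounded multiple of $\delta$, and to choose $\delta$ only after $s$ (Lemma~\ref{lem:K}) is known. One must also check that the passage to finitely many values and the terminating‑point bookkeeping are uniform in the conditional distributions induced by the unknown $\mu$, which is where the independence of the finite value set from $\mu$ (again Lemma~\ref{lem:B}) and the geometric decay in Lemma~\ref{lem:6} do the work.
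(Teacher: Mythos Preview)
Your plan is essentially the paper's own argument: pass to ${\bf Q}^{\delta}$, discretise the $p$'s and $q$'s via Lemma~\ref{lem:B} so that only finite additivity is needed, use Lemma~\ref{lem:Q} in place of the exact relation $v_iq_i=vp$, run the $w$--drift for a \emph{fixed finite horizon} $N$ (not to infinity) so that the error $(30{,}000)^2\delta\log(1/v)$ stays controlled, and handle terminating points with Lemmas~\ref{lem:5} and~\ref{lem:6}. The paper carries this out in exactly that order.

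The one place your sketch is genuinely underspecified is how the exceptional set $E$ (where ${\bf Q}^{\delta}$ fails) interacts with the chain of depth $N$. You write ``off an $\eta$-fraction of chains'' as if the Kolmogorov bound alone handled it, but a single bad vertex anywhere in the depth-$N$ window of a chain destroys Lemma~\ref{lem:Q} there, and that window contains up to $50^N$ vertices. The paper deals with this by a union bound: it first fixes $N$ (so that the drift argument succeeds with probability $\ge 1-\tfrac{1}{100}2^{-12}$), and only \emph{then} chooses $\epsilon<\tfrac{2^{-12}}{100\cdot 50^N}$, so that the $\mu$-measure of chains whose depth-$N$ window meets $E$ is again at most $\tfrac{1}{100}2^{-12}$. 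In your write-up $\eta$ is introduced before $N$ is known; you need to reverse that dependence (choose $N$ from the drift, then $\eta$ in terms of $50^N$, then $\delta$ from Lemma~\ref{lem:Q} and the terminating-level cutoff of Lemma~\ref{lem:6}, and finally $\gamma=\delta\eta$). With that ordering made explicit your argument coincides with the paper's.
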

\begin{proof}
  We assume that the $p$ values have been rounded down to integer multiples of
  $\frac 1 {20,000}$ and that there are finitely many $q$ values that preserve
  the property that the expectation of the
  $\log(p_0 (\omega) ) - \log (q_1 (\omega)) + \dots - \log(q_i(\omega)) + \log
  (p_i(\omega))$ goes to infinity as $i$ goes to infinity.  We will prove that,
  with sufficiently small positive~$\epsilon$ and $\delta$, the subset
  where~${\bf Q}^{ \delta}$ does not hold must exceed $\epsilon$, given that
  the solution is properly measurable. We start with a hypothetical chain
  generated by $x\rightarrow y$ where the passive pain at $y$ is at least
  $\frac 3 4$ and show that this can happen with only a very small probability.
   
  There is a positive integer $N$ such that the probability is at least
  $1- \frac 1 {100} 2^{-12} $ that there is some path $\omega$ with
  $\log(p_0 (\omega) ) - \log (q_1 (\omega)) + \dots + \log (-q_N)$ greater
  than $3$.  As there is a lower bound on all the $ \log (p_i)$, from
  Lemma~\ref{lem:Q}, in a non-terminating chain generated by $x\rightarrow y$
  where $y$ has a passive pain level of at least $\frac 1 2$ there is
  a~$\delta^*$ such that if the ${\cal Q}^*$ rule is followed, then after a
  distance of $N$ the probability is at least $1- \frac 1 {100} 2^{-12} $ that
  a passive pain level of $2$ is reached, (which is impossible).

  The number of vertices of distance $N$ away from a point in a chain of
  length~$N$ does not exceed~$50^N$. So we make positive $\epsilon$ smaller
  than $ \frac {2^{-12}} {100\cdot 50^N}$ and make positive~$\hat \delta$
  smaller than $ \delta^* \frac 12 e^{-NM} $. All that is left is to control
  for the probability that a terminating point of odd level sends more than a
  weight of $\frac 1 {10} 2^{-12} $ to a non-terminating point in the chain of
  distance no more than $N$ from the initial $y$.

  By Lemma~\ref{lem:6}, there is some odd $i$ such that the chances of some
  terminating point of level $i$ or more sending more than
  $\frac 1 {100} 2^{-12}$ to any of these vertices, at most~$50^N$ of them, is
  less than $\epsilon $.  So, we define our $\delta$ to be
  $ (\frac {2^{-12}} {100} )^{i+1} \hat \delta$ and use Lemma~\ref{lem:5} to
  have our pair $\delta$ and $\epsilon$ such that a measurable
  $\delta \epsilon$ stable solution is not possible.
\end{proof}
     
Notice that the last part of the proof incorporates both possibilities of
$x\rightarrow y$ being either terminating or non-terminating.  This proof is
far from optimal in choosing a $\delta$ and $\epsilon$, and we are sure that
this choice can be done much better.

\subsection{A Bayesian Game}

Our interest in paradoxical colouring rules came originally from game theory,
from the desire to show that {\bf all}, not just some, equilibria of a game are
not measurable.  R. Simon \cite{S} showed that there is a Bayesian game which
had no Borel measurable equilibria, though it had non-measurable
equilibria. The infinite dihedral group, an amenable group, acted on the
equilibria in a way that prevented any equilibrium from being measurable.

R. Simon and G. Tomkowicz~\cite{ST} showed that there is a Bayesian game with
non-measurable equilibria but no Borel measurable $\epsilon$-equilibrium for
small enough positive $\epsilon$ and later \cite {ST2} that there is a Bayesian
game with non-measurable equilibria but no measurable $\epsilon$-equilibria for
small enough positive $\epsilon$ where measurable in the above means with
respect to any finitely additive measure that extends the Borel measure and
respects the probability distributions of the players.  These constructions
involved the action of a non-amenable semi-group.

Some background to Bayesian games can be found in \cite{ST2} and \cite{HL}. Of
particular importance is the relationship to countable Borel equivalence
relations.

Let $G=\mathbf F_5$ be the group generated freely by five generators, $T_1,
T_2, T_3, T_4, T_5$ and let $X$ be the Cantor set $\{ -1, 1\} ^G$.  Let $A$ be
the set $\{ a_i^+, a_i^- \ | \ i=1,2,3,4,5\}$ of cardinality~$10$ and let $B$
be the set $\{ b_i^+, b_i^- \ | \ i=1,2,3,4,5\}$ of cardinality~$10$. We assume
that~$A$ and $B$ are disjoint.  Let $C$ be the set $A\cup B$ of
cardinality~$20$ and let~$\Omega$ be the space $X \times C$. Let $m$ be the
canonical probability distribution such that the measure of a cylinder set
defined by
$$\{ x \ | \ x^{g_1}= f_1, \dots, x^{g_l} = f_l \}$$ is equal to $2^{-l}$
for every sequence $f_1, \dots , f_l$ of choices in $\{ -1, 1\}$ and~$g_1,
\dots , g_l$ are mutually distinct.  Define the Borel measure $\mu$ on $\Omega$
by
$$\mu (A \times \{c\}) = \frac {m(A)} {20},$$
for every Borel measurable set $A$ in $X$ and any choice of $c$ in $C$.

There are two players, the active player, called the green player, and the
passive player, called the red player.  An information set for a player is
another term for a member of that player's partition. For every $x\in X$, the
green player has the information set
$$(\{ x\} \times A) \bigcup_{i=1,2,3,4,5} \{ (T_i (x), b_i^-) , (T_i^{-1}
(x) , b_i^+) \}.$$ For every $y\in X$, the red player has the information set
$$(\{ y\} \times B) \cup_{i=1,2,3,4,5} \{ (T_i (y) , a_i^-), T_i^{-1}
(y), a_i^+ ) \}.$$ Notice that each information set is of cardinality $20$ and
for both players these sets partition the space. To identify the information
set of the player, the green player is {\bf centred at} $x$ if
$\{ x\} \times A$ is half of its information set and the red player is {\bf
  centred at}~$y$ if $\{ y\} \times B$ is half of its information set, meaning
that if nature chooses some~$(y, b)$ with $b\in B$ then the green player is
centred at some neighbouring point while the red player is centred at $y$ (and
a symmetric statement can be made if nature chooses some $(y,a)$ with
$a\in A$).  We will also refer to~$(x, a_i^+)$ as~$(x, a_y)$
where~$y= T_i (x)$, $(x, a_i^-)$ as~$(x, a_y)$ where $y= T_i^{-1} (x)$,
$(y, b_i^+)$ as $(y, b_x)$ where~$x= T_i^{-1} (y)$, and $(y, b_i^-)$ as
$(y, b_x)$ where $x= T_i (y)$.

The green player has the choice of $5$ actions, $t_1, t_2, t_3, t_4, t_5$. A
strategy for the green player at any $x$ is a point in the four-dimensional
simplex $\Delta (\{ 1, 2, 3, 4, 5\} )$.

The red player centred at~$y$ has the choice of $2\cdot M^{d(y)}$ actions,
where $M$ is a very large positive integer, size to be determined later.  The
set of actions is
$$ \{ c, u\} \times \prod _{x\in S(y)} \{ m_x\ | \ 0 \leq m_x \leq M-1\}.$$
The symbol $c$ stands for ``crowded'' and $u$ for ``uncrowded''. The choice of
a mixed strategy for the red player is for some point in the $2\cdot
M^{d(y)}-1$ dimensional simplex.

The payoffs for the green player centred at $x$ take place only in $\{ x \}
\times A$, meaning that in the other ten locations the payoff is uniformly
zero. The payoffs for the red player centred at~$y$ take place only in $\{ y\}
\times B $.  It is more restrictive than this. The payoffs for the green player
centred at $x$ take place only in the five locations $\{( x , a_i^+) \ | \
i=1,2,4,5\} $ if $x^e =+1$ or only in the five locations $\{( x , a_i^-) \ | \
i=1,2,4,5\} $ if $x^e =-1$. The payoffs for the red player centred at $y$ take
place only in that subset of $B$ corresponding to the subset $S(y)$ (meaning
only at the $b_x$ with~$x\in S(y)$).  With both players, as each gives the
probability~$\frac 1 {20}$ to each point in its information set, the payoff is
determined by summing over all the points giving equal weight to each. The key
to understanding is that whatever is played by the green player centred at $x$
is done uniformly throughout its information set $(\{ x\} \times A)
\cup_{i=1,2,3,4,5} \{ (T_i (x), b_i^{-}) , (T_i^{-1} (x) , b_i^+) \} $, and the
same is true for the red player centred at $y$ and its information set.

First we define the payoffs for the green player.  We consider what happens to
the green player centred at $x$ when choosing the action $t_i$.  The action
$t_i$ has a payoff consequence only at the point $(x, a_y)$ where $y= T_i^{x^e}
$.  Given that the red player centred at~$y$ chooses $(c, m_x, *)$, where $*$
stands for any choices of $m_{x'}$ for other~$x'\in S(y)$, the payoff to the
green player centred at $(x,a_y)$ is $-\frac {m_x} M$.  . Otherwise for all
combination with~$u$ instead of $c$ the payoff is $0$.

Now we define the payoffs for the red player.  For any $x\in S(y)$, meaning $y=
T_i^{x^e}(x)$, let $t_y$ be the action $t_i$.  First consider a piece-wise
linear convex function $f: [0,1] \rightarrow {\bf R}$, where $f= \max_k f_k$
for some affine functions $f_0, \dots , f_{M-1}$ where $f$ is equal to $ f_k$
on $[\frac k {M}, \frac {k+1} {M}]$.  Let~$s_k^+$ and~$s_k^-$ be defined by
$f_k (0) = s_k^-$ and $f_k (1) = s_k^+$, and the difference in slopes between
consecutive~$f_i$ and $f_{i+1}$ is always at least~$1$.  Define the value of
the actions~$(c, m_x, *) $ played against~$t_y$ at $(y,b_x)$ to be~$s_{m_x} ^+
+ 1$, the value of the actions $(u, m_x, *) $ played against~$t_y$ at $(y,b_x)$
to be~$s_{m_x} ^+ + 1+r$, for any $z\neq x$ the value of the actions $(c, m_x,
*) $ played against~$t_z$ at~$(y,b_x)$ to be $s_{m_x} ^- $, and for any $z\neq
x$ the value of the actions $(u, m_x, *) $ played against~$t_z$ at $(y,b_x)$ to
be $s_{m_x} ^- + 1+r$.

Because the consequence for the red player centred at $y$ by choosing some
$m_x$ for~\mbox{$x\in S(y)$} lies entirely at the point $(y, b_x)$ and is also
independent of the choice for~$c$ or $u$, the red player will chose the
marginal probabilities for~$m_x$ according to~$s_{m_x}^+$ and $s_{m_x}^-$ and
the probability for $t_y$ performed by the green player centred at $x$.  By
the structure of those values, no more than two $m_x$ will be chosen in
equilibrium, and only two adjacent $m_x-1$ and $m_x$ if the probability for
$t_y$ is exactly~$\frac {m_x} {M}$. When the probability for $t_y$ lies
strictly between $\frac {m_x} {M}$ and $\frac {m_x+1} {M}$ then only $m_x$ will
be chosen in equilibrium.

Notice that in equilibrium this game not only approximates the colouring rule
of the previous sections, and it can be done so in a way for which the computer
calculations also apply.  If the green player centred at $x$ chooses the action
$t_y$ with probability $q$, the red player centred at $y$ will mimic with
various combinations of $(c, m_{[ qM]}, *), (u, m_ {[qM]}, *)$ and possibly
with some $(c, m_{qM -1}, *), (u, m_{qM-1}, *)$ if $qM$ is an integer. The cost
for the green player centred at $x$ and with the action $t_i$ will be $\frac 1
{20}$ times the red player's total probability of playing $c$ centred at $y$
times some quantity that is between $ \frac {[qM-1]} M $ and $\frac {[qM]} M$.

To show a lack of an $\epsilon$-equilibrium (measurable with respect to any
proper finitely additive extension) using our previous argument for the lack of
an $\epsilon$-stable solution, we require that the process of copying weights
is done with sufficient precision.  Whatever $ \delta$ worked for the $\epsilon
\delta$-stability argument above, we divide by 3 and declare this to be the
quantity needed for the lack of finitely additive measurable~$\epsilon \delta
/3$-equilibria for this Bayesian game.  We make $M$ be larger than~$\frac 3 {
\delta}$ to insure that there is no inaccuracy up to $\frac {\delta} 3$
resulting from the intervals used. But lastly, we need to know that there is no
relevant distortion from the mixture of the $c$ and the~$u$ coordinates with
the occasional choice of a level $m_j$ that is not a good copy of the actual
weight sent from the relevant point. We need to know that the summation of the
probabilities given to the actions $(c, m_j, *)$ is sufficiently close to the
average value for $m_j$ times the average proportion for $c$ (the product of
expectations from the marginals).  Lets suppose that the level $m_j$ is
incorrect when $m_i$ is the choice closest to the correct choice on the same
side as~$m_j$.  Due to the slopes of the lines defining the payoffs, we know
that the cost of this mistake is at least $\frac {|j-i| ( |j-i| -1)} 2 q_j$
where $q_j$ is the probability of using $m_j$.  We have that the summation over
$j$ of the $ \frac {|j-i| (|j-i| -1) } 2 q_j$ cannot exceed $\delta /3$.  It
follows that~$\frac 1 M \sum_j q_j |i-j|$ cannot exceed $2\delta/M$.  By
choosing~$M$ greater than $\frac 3 { \delta}$, we have the needed accuracy.

\section{Conclusion}
What interested us initially about paradoxical colouring rules was the
connection to the Banach-Tarski Paradox.

\begin{que}
  For all colourings $c$ satisfying the rule $\bf Q$ is there a finite
  partition of the colour space into Borel sets such that the inverse images of
  this finite partition along with the Borel sets ${\cal F}$ and shifts in $G$
  generate a finite partition of $X$ with the Banach-Tarski property, e.g. they
  create two copies of $X$ after shifting by members of $G$?
\end{que}
     
A further issue is raised by the expected value of the non-measurable
solutions. With rule ${\bf Q}$ there exists non-measurable solutions where
optimality is perfect, meaning the pain level of $0$ almost everywhere. And
with all measurable solutions there is an average passive pain level above
$\frac {2^{-11} }9$.

\begin{que}
  Does there exist a problem of local optimisation or a Bayesian game such that
  the optimisation can be accomplished locally or the values can be measured
  globally, but not both simultaneously?
\end{que}

Theorem~\ref{thm:paradox} uses a free non-abelian group of rank 5.  Given the
existence of non-amenable groups without free non-abelian subgroups,
demonstrated by Olshanskii and Grigorchuk, (see [TW], Chapter 12 for the
details) it is natural to ask the following:

\begin{que}\label{que:withoutfree}
  Does there exists a probabilistic paradoxical colouring rule that uses a
  non-amenable group without free non-abelian subgroups?
\end{que}

The idea behind Question~\ref{que:withoutfree} is related to the complexity
behind the proof of Theorem~\ref{thm:paradox}. Recall that two or three free
choices were not enough to obtain a paradoxical rule. So it is natural to
investigate and describe if the required complexity can be forced by generators
that are not independent.

The paradox would be more graphic if passive pain began with $1$ rather
than $1+ \frac 1 {2^{11} }$, meaning that, outside a set of measure zero, a 
colouring satisfying the rule defines a flow where to every point there is no
more than a total of $1$ going inward (and yet in $2^{-10}$ of the space there
is no inward flow).  Could one find a colouring rule with a much stronger
paradoxical effect?  Instead of choosing between the five directions with the
incoming arrows of variable degree, one could assume that toward any point
there are always five arrows coming in but leaving from any point there are
anywhere from $0$ to $10$ arrows. Instead of a rule defined by the avoidance of
pain, the goal might be to obtain pleasure by directing weight toward where
weight is lacking.  If we could show that in general (except for a set of
measure zero) the weights directed toward a point add up to at least $1$, then
the inward flow is at least $1$ but the outward flow is no more than
$1-\frac 1 {2^{-10}}$. Initial investigation suggest that this could have a
stronger paradoxical effect.

The proof of Theorem~\ref{thm:nostable} seems convoluted. Terminating points
and non-terminating points are treated separately, and it would be nice to have
a unified approach.  The problem is that in the calculations behind
Theorem~\ref{thm:paradox}, integrating the effect of terminating points into
the argument would involve a division by $0$ (as we divide by one less than the
degree of the vertex).  Indeed terminating points are such that they need
infinite levels of pain in order to avoid sending all weight toward them. The
present approach is not efficient for establishing a good upper bound for the
$\epsilon$ for which there is no measurable $\epsilon$-stable solution. Again,
a colouring rule with a stronger paradoxical effect is desired.

\begin{que}
  What is the largest positive $\epsilon $ such that there is a probabilistic
  paradoxical colouring rule defined by a local optimisation where the
  objective function is between $0$ and $1$ and there is no $\epsilon$-stable
  solution that is measurable with respect to any proper finitely additive
  extension?
\end{que}

\appendix

\section{Code}
\label{sec:code}

In this section, we present the entire C++ code used to establish the
correctness of Lemma~\ref{lem:J}. Section~\ref{sec:maincode} presents the main
function, which controls the parallel computation of the quantities required for
the proof of Lemma~\ref{lem:J}. Section~\ref{sec:helpercode} includes the code
for the classes and for various helper functions needed for the numerical
calculations. The entire code and compilation instructions can accessed at a
GitHub repository~\cite{GH}.

\subsection{Main Code}
\label{sec:maincode}

\begin{lstlisting}
// Compile command:
// g++ genanalysis_threaded.cpp -o genericanalysis -pthread -std=c++11

#include<iostream>
#include<fstream>
#include<cmath>
#include<thread>

using namespace std;

const long double accuracy = 1e-11L;
const long double lb=0.0L,ub=1e+8L; // Bounds for the function values
const int N=5;        //
const int NoFns=N-1;  // no of functions
const int types=9;
const int jconfigs = 495; // #(a,b,c,d)\in types^4 s.t. a<=b<=c<=d

const int maxC=19999;
const long double M = (long double) maxC+1;
const int minC=1;

long double vcur[maxC+1];
// long double vnext[maxC+1];

long double r;

long double maxdiff1[types+1][types+1][types+1][types+1];
long double maxjumpval[types+1][types+1][types+1][types+1];
long double mintail[types+1][types+1][types+1][types+1];
long double mingen[types+1][types+1][types+1][types+1];

#include "Func1.cpp"
#include "helpers1.cpp"

void generic_thread(int j1, int j2, int j3, int j4){

  Func* Fns[NoFns];
  long double roots[NoFns];
  Fns[0] = new Func_iter(j1,vcur);
  Fns[1] = new Func_iter(j2,vcur);
  Fns[2] = new Func_iter(j3,vcur);
  Fns[3] = new Func_iter(j4,vcur);

  long double mxd=0.0L, mxj=0.0, mnt=1.0L;
  mingen[j1][j2][j3][j4]=ub;
    
  for(int m=1;m<=maxC;m++){
    long double budget = 1.0 - (m/M);

    SimulSolver(Fns,NoFns,roots,budget,lb,ub,accuracy);

    if (j1==j2-1)
      mxd = max(mxd,abs(roots[1]-roots[0]));
    if (j2==j3-1)
      mxd = max(mxd,abs(roots[2]-roots[1]));
    if (j3==j4-1)
      mxd = max(mxd,abs(roots[3]-roots[2]));
    if (j1<j2)
      mxj = max(mxj,roots[1]);
    if (j1<j3)
      mxj = max(mxj,roots[2]);
    if (j1<j4)
      mxj = max(mxj,roots[3]);
    if (j2<j3)
      mxj = max(mxj,roots[2]);
    if (j2<j4)
      mxj = max(mxj,roots[3]);
    if (j3<j4)
      mxj = max(mxj,roots[3]);
    if (j1>1 && j2>j1)
      mnt = min(mnt, (m/M) + roots[2] + roots[3]);

    long double t = (*Fns[0])(roots[0]);
    long double ratio = log(m*t/M) - log(vcur[m]);
    if (ratio < mingen[j1][j2][j3][j4])
      mingen[j1][j2][j3][j4] = ratio;
    
  }
  maxdiff1[j1][j2][j3][j4] = mxd;
  maxjumpval[j1][j2][j3][j4] = mxj;
  mintail[j1][j2][j3][j4] =  mnt;
}

int main(){
  cout.precision(5);
  
  binomcoef = new int[types+1];
  calcbinomcoef(types);

  r=rSolver(N,0.0,0.9,1.0,accuracy); // r should be very close 1
  cout << "The value of r: " << r << endl;

  // for calculating v_1 from innerfn()
  Func_ini f(1);
  // Adjustments for the bottom and top end of the range
  for(int m=minC;m<=maxC;m++){
    vcur[m]=f.innerfn(m/M - 1.0/N);
    //    cout << m << " : " << vcur[m] << endl;
    long double g = 1.0;
    if (m<4000){
      g = (m/M - 0.2);
      g *= (20.0/27.0)*g*g;
      g = exp(g);
    }
    if (m>10000){
      g = (m/M - 0.5);
      g *= -0.25*g*g;
      g = exp(g);
    }
    vcur[m] *= g;
  }

  thread *myth[jconfigs];
  int t_i = 0;
  for(int j1=1;j1<=types;j1++){
    for(int j2=j1;j2<=types;j2++){
      for(int j3=j2;j3<=types;j3++){
	for(int j4=j3;j4<=types;j4++){
	  myth[t_i] = new thread(generic_thread,j1,j2,j3,j4);
	  t_i++;
	}
      }
    }
  }

  t_i = 0;
  for(int j1=1;j1<=types;j1++){
    for(int j2=j1;j2<=types;j2++){
      for(int j3=j2;j3<=types;j3++){
	for(int j4=j3;j4<=types;j4++){
	  if ((*myth[t_i]).joinable()){
	    (*myth[t_i]).join();
	    t_i++;
	  }
	}
      }
    }
  }

  long double mxd=0.0L, mxj=0.0L, mnt=1.0L;
  long double sum = 0.0L;
  for(int j1=1;j1<=types;j1++){
    for(int j2=j1;j2<=types;j2++){
      for(int j3=j2;j3<=types;j3++){
	for(int j4=j3;j4<=types;j4++){
	  mxd = max(mxd,maxdiff1[j1][j2][j3][j4]);
	  mxj = max(mxj,maxjumpval[j1][j2][j3][j4]);
	  mnt = min(mnt,mintail[j1][j2][j3][j4]);

	  long double p = probfn(j1,j2,j3,j4,r);
	  long double w = mingen[j1][j2][j3][j4];
	  sum += w*p*noperm(j1,j2,j3,j4);
	}
      }
    }
  }

  cout << endl << "Proof of Lemma 5." << endl;
  cout << "The maximum difference |q_k - q_l| for j_k=j_l+1: "
       << mxd << endl;
  cout << "The maximum q_k for j_k>j_l: " << mxj << endl;
  cout << "The minimum value of p+q_3+q_4 for j_2>j_1>1: " << mnt
       << endl << endl;
  
  sum /= r;
  cout << "Proof of Lemma 7." << endl;
  cout << "The expectation of the ratios: " << sum << endl;
}  


\end{lstlisting}

\subsection{Helper Functions}
\label{sec:helpercode}

\begin{lstlisting}
// implements function (1-(1-x/2)^{2n-1})^{n-1} - x
// to find the value of r using bisection solver rSolver
long double rfun(long double x, long double n){

  double result = pow((1.0-(x/2.0)),2*n-1);
  result = 1 - result;
  result = pow(result,n-1)-x;
  return result;
}

// bisection solver for rfun() function
long double rSolver(long double n,long double target,long double lb,
                    long double ub,long double accuracy){
  long double mid;
  do{
    mid=(lb+ub)/2;
    //    cout << mid << "   "; 
    long double y=rfun(mid,n);
    // cout << y << endl;
    if (y>=target)
      lb=mid;
    else
      ub=mid;
    }while (ub-lb >= accuracy);
  return mid;
}

// factorial function for n>=1
int factorial(int n){
  int f = 1;
  while (n>1)
    f *= n--;
  return f;
}

// calculates no. of different permutations of j1,j2,j2,j4
// 1 <= j1 <= j2 <= j3 <= j4 <= maxtypes
int noperm(int j1, int j2, int j3, int j4){
  int t[4];
  t[0]=j1;
  t[1]=j2;
  t[2]=j3;
  t[3]=j4;

  int no = 24; // 4!
  int i = 1; // index 
  int rep = 1; // number of repetitions of a repeated value
  while(i<4){
    if (t[i]==t[i-1]){
      i++;
      rep++;
    }
    else{
      no /= factorial(rep);
      i++;
      rep=1;
    }
  }
  no /= factorial(rep);
  return no;
}

int *binomcoef;

void calcbinomcoef(const int n)
{
  binomcoef[0]=1;
  for(int i=1;i<=n;i++){
    binomcoef[i] = binomcoef[i-1] * (n-i+1) / i;
  }
}
  
long double probfn(int i,int j,int k,int l,long double r){ 
  long double result=1.0;
  result *= binomcoef[i];
  result *= binomcoef[j];
  result *= binomcoef[k];
  result *= binomcoef[l];
  result *= pow(1.0*r/2.0,i+j+k+l);
  result *= pow((1.0-1.0*r/2.0),36-i-j-k-l);  // 36 = NoFns * types

  return result;
}
  

// BisectionSolver tries to solve for x in [lb,ub] such that f(x)=target
// It stops when ub-lb < accuracy
// It initially assumes and maintains that f(lb) >= target >= f(ub)
long double BisectionSolver(Func* f,long double B,long double target,
                            long double lb,long double ub,
                            long double accuracy){
  long double mid;
  do{
    mid=(lb+ub)/2;
    long double y=(*f)(mid);
    if (y>=target)
      lb=mid;
    else
      ub=mid;
    }while (ub-lb >= accuracy);
  return mid;
}

// SimulSolver tries to solve nofn functions simultaneously such that
// fn[1](x_1)=fn[2](x_2)=...=fn[nofn](x_nofn) and x_1+x_2+...+x_nofn=B
// The common function value should be in [lb,ub]
// fn[i]s are assumed to be non-increasing
// The equalities are checked within accuracy
// SimulSolver updates argument array roots with corresponding x_i values
void SimulSolver(Func* fn[],int nofn,long double* roots, long double B,
                 long double lb,long double ub,long double accuracy){
  do{
    long double sum=0.0;
    long double mid=(lb+ub)/2;
    for(int i=0;i<nofn;i++){
      roots[i]=BisectionSolver(fn[i],B,mid,0.0,1.0,accuracy); 
      sum+=roots[i];
    }
    if (B-sum >= accuracy)
      ub=mid;
    else if (sum-B>accuracy)
      lb=mid;
    else
      break;
  }while (ub-lb >= accuracy);
  return;
}

\end{lstlisting}

What follows contain the definition of the \lstinline{Func} class that is used to
represent functions $w(x/j_i)$ from Section~\ref{sec:stochprocess}.

\begin{lstlisting}
class Func{
public:
  virtual long double operator()(long double x)=0;
  long double k;
  virtual long double innerfn(long double x)=0;
};

class Func_iter:public Func{
public:
  Func_iter(long double key, long double *vec);
  long double operator()(long double x);
  long double innerfn(long double x);
  
private:
  long double *v;
};

class Func_ini:public Func{
public:
  Func_ini(long double key);
  long double operator()(long double x);
  long double innerfn(long double x);
  
private:
  long double *numerator, *denom, *series;
};

Func_iter::Func_iter(long double key, long double *vec){
  k=key;
  v=vec;
}
  
long double Func_iter::innerfn(long double x){
  static int minindex=maxC+1,maxindex=0;
  if (x>=1)
    return 1e+10;
  int i = (int) floor( M * x);
  return v[i];
}

long double Func_iter::operator()(long double x){
  return innerfn((1.0-x) / k) / x;
}
 
Func_ini::Func_ini(long double key){
  k=key;
  series = new long double[4];
  series[0] = 1.0;
  series[1] = 5.0/3072.0;
  series[2] = 100.0/17.0/9.0/1024.0/1024.0;
  series[3] = 125.0/1024.0/1024.0/1024.0/4096.0;
}

long double Func_ini::innerfn(long double x){
  long double result=series[3];
  for (int i=2;i>=0;i--){
    result *= x;
    result += series[i];
  }
  result *= (1+5.0*x) * (1+5.0*x/16.0) * (1+5.0*x/256);
  result /= (1-5.0*x/4.0) * (1-5.0*x/64.0) * (1-5.0*x/1024.0);
  return result;
}

long double Func_ini::operator()(long double x){
  return innerfn((1.0-x)/k-0.2) / x;
} 
\end{lstlisting}


\begin{thebibliography}{HD}

\bibitem{GH} T.~Batu, GitHub repository, (2023), \url{https://github.com/tugkanbatu/paradoxicalcolouring}.
  



\bibitem{HL} Z. Hellman and  J. Yehuda , \emph {Equilibria Existence in
    Bayesian Games: Climbing the Countable Borel Equivalence Relation
    Hierarchy}, Working Papers 2020- 15, Business School - Economics,
  University of Glasgow. 


\bibitem{MT} J. Mycielski and G. Tomkowicz,
 \emph {Shadows of the Axiom of Choice in the universe $L(\bf {R}) $},
 Arch. Math. Logic, 57 (2018), pp.~607-616.

\bibitem{S} R. S. Simon, Games of Incomplete Information, Ergodic 
Theory, and the Measurability of Equilibria,
 {\em Israel J. Math.}, 138, 1,  (2003), pp.~73-92. 

 \bibitem{ST} R. S. Simon and G. Tomkowicz,
      \emph {A Bayesian Game without $\epsilon$-equilibria},
       Israel J. of Math. 227 (2018), pp.~215-231.   
   
\bibitem{ST1} R. S. Simon and G. Tomkowicz,
      \emph {Paradoxical decompositions and finitary colouring rules},
      Preprint.
      
\bibitem{ST2} R. S. Simon and G. Tomkowicz,
      \emph {A measure theoretic paradox from a continuous colouring rule},
       Preprint.  

\bibitem{W} G. Tomkowicz and S. Wagon,
     \emph{The Banach-Tarski Paradox, Second Edition},
      Cambridge University Press, 2016. 
\end{thebibliography}
\end{document}